\pgfplotsset{compat=newest}
\def\cite{\citet}
\numberwithin{equation}{section}
\def\@noindentfalse{\global\let\if@noindent\iffalse}
\def\@noindenttrue {\global\let\if@noindent\iftrue}
\def\@aftertheorem{%
  \@noindenttrue
  \everypar{%
    \if@noindent%
      \@noindentfalse\clubpenalty\@M\setbox\z@\lastbox%
    \else%
      \clubpenalty \@clubpenalty\everypar{}%
    \fi}}
\theoremstyle{plain}
\newtheorem{theorem}{Theorem}[section]
\newtheorem{lemma}[theorem]{Lemma}
\theoremstyle{definition}
\newtheorem{remark}[theorem]{Remark}
\bf\mathversion{bold}}{\thesubsection\kern1em}{0pt}{}
\bf\mathversion{bold}}{}{0pt}{}
\def\be#1{\begin{equation*}#1\end{equation*}}
\def\ben#1{\begin{equation}#1\end{equation}}
\def\bes#1{\begin{equation*}\begin{split}#1\end{split}\end{equation*}}
\def\besn#1{\begin{equation}\begin{split}#1\end{split}\end{equation}}
\def\ba#1{\begin{align*}#1\end{align*}}
\def\bg#1{\begin{gather*}#1\end{gather*}}
\def\bgn#1{\begin{gather}#1\end{gather}}
\def\bm#1{\begin{multline*}#1\end{multline*}}
\def\bmn#1{\begin{multline}#1\end{multline}}
\let\@@todo\todo
\def\todo#1{\@@todo[color=red,backgroundcolor=red!10,size=\tiny]{#1}}
\def\given{\mskip 0.5mu plus 0.25mu\vert\mskip 0.5mu plus 0.15mu}
\newcounter{bracketlevel}%
\def\@bracketfactory#1#2#3#4#5#6{%
\expandafter\def\csname#1\endcsname##1{%
\global\advance\c@bracketlevel 1\relax%
\global\expandafter\let\csname @middummy\alph{bracketlevel}\endcsname\given%
\global\def\given{\mskip#5\csname#4\endcsname\vert\mskip#6}\csname#4l\endcsname#2##1\csname#4r\endcsname#3%
\global\expandafter\let\expandafter\given\csname @middummy\alph{bracketlevel}\endcsname%
\global\advance\c@bracketlevel -1\relax%
}%
}
\def\bracketfactory#1#2#3{%
\@bracketfactory{#1}{#2}{#3}{relax}{0.5mu plus 0.25mu}{0.5mu plus 0.15mu}
\@bracketfactory{b#1}{#2}{#3}{big}{1mu plus 0.25mu minus 0.25mu}{0.6mu plus 0.15mu minus 0.15mu}
\@bracketfactory{bb#1}{#2}{#3}{Big}{2.4mu plus 0.8mu minus 0.8mu}{1.8mu plus 0.6mu minus 0.6mu}
\@bracketfactory{bbb#1}{#2}{#3}{bigg}{3.2mu plus 1mu minus 1mu}{2.4mu plus 0.75mu minus 0.75mu}
\@bracketfactory{bbbb#1}{#2}{#3}{Bigg}{4mu plus 1mu minus 1mu}{3mu plus 0.75mu minus 0.75mu}
}
\let\original@left\left
\let\original@right\right
\renewcommand{\left}{\mathopen{}\mathclose\bgroup\original@left}
\renewcommand{\right}{\aftergroup\egroup\original@right}
\newcounter{ctr}\loop\stepcounter{ctr}\edef\X{\@Alph\c@ctr}%
\edef\csname s\X\endcsname{\noexpand\mathscr{\X}}
\edef\csname c\X\endcsname{\noexpand\mathcal{\X}}
\edef\csname b\X\endcsname{\noexpand\boldsymbol{\X}}
\edef\csname I\X\endcsname{\noexpand\mathbb{\X}}
\let\@IE\IE\let\IE\undefined
\newcommand{\IE}{\mathop{{}\@IE}\mathopen{}}
\let\@IP\IP\let\IP\undefined
\newcommand{\IP}{\mathop{{}\@IP}}
\newcommand{\Var}{\mathop{\mathrm{Var}}}
\newcommand{\Cov}{\mathop{\mathrm{Cov}}}
\newcommand{\Cor}{\mathop{\mathrm{Cor}}}
\newcommand{\Be}{\mathop{\mathrm{Be}}}
\newcommand{\bigo}{\mathop{{}\mathrm{O}}\mathopen{}}
\newcommand{\lito}{\mathop{{}\mathrm{o}}\mathopen{}}
\newcommand{\law}{\mathop{{}\sL}\mathopen{}}
\def\sump{\sideset{}{'}\sum}
\def\^#1{\relax\ifmmode {\mathaccent"705E #1} \else {\accent94 #1}\fi}
\def\~#1{\relax\ifmmode {\mathaccent"707E #1} \else {\accent"7E #1}\fi}
\def\*#1{\relax#1^\ast}
\edef\-#1{\relax\noexpand\ifmmode {\noexpand\bar{#1}} \noexpand\else \-#1\noexpand\fi}
\def\>#1{\vec{#1}}
\def\.#1{\dot{#1}}
\def\atop{\@@atop}
\newcommand{\tolaw}{\stackrel{\law}{\longto}}
\newcommand{\inj}{\mathop{\mathrm{inj}}}
\renewcommand{\leq}{\leqslant}
\renewcommand{\geq}{\geqslant}
\renewcommand{\phi}{\varphi}
\newcommand{\eps}{\varepsilon}
\newcommand{\N}{\mathop{{}\mathrm{N}}}
\newcommand{\eq}{\eqref}
\newcommand{\I}{\mathop{{}\mathrm{I}}\mathopen{}}
\newcommand\indep{\protect\mathpalette{\protect\@indep}{\perp}}
\def\@indep#1#2{\mathrel{\rlap{$#1#2$}\mkern2mu{#1#2}}}
\newcommand{\toinf}{\to\infty}
\newcommand{\longto}{\longrightarrow}
\def\sbinom#1#2{{\textstyle\binom#1#2}}
\def\parsetime#1#2#3#4#5#6{#1#2:#3#4}
\def\parsedate#1:20#2#3#4#5#6#7#8+#9\empty{20#2#3-#4#5-#6#7 \parsetime #8}
\def\moddate{\expandafter\parsedate\pdffilemoddate{\jobname.tex}\empty}
\def\triangleedge{{\begin{tikzpicture}[scale=0.15]%
\node at (0,0) [draw,circle,fill=black,inner sep=0.45pt] (A) {};%
\node at (0.5,0.79) [draw,circle,fill=black,inner sep=0.45pt] (B) {};%
\node at (1,0) [draw,circle,fill=black,inner sep=0.45pt] (C) {};%
\node at (2,0)  [draw,circle,fill=black,inner sep=0.45pt] (D) {};%
\node at (2,0.79)  [draw,circle,fill=black,inner sep=0.45pt] (E) {};%
\draw (A) -- (B); %
\draw (B) -- (C); %
\draw (C) -- (A); %
\draw (D) -- (E); %
\end{tikzpicture}}}
\def\triangleappendix{{\begin{tikzpicture}[scale=0.15]%
\node at (0,0) [draw,circle,fill=black,inner sep=0.45pt] (A) {};%
\node at (0.5,0.79) [draw,circle,fill=black,inner sep=0.45pt] (B) {};%
\node at (1,0) [draw,circle,fill=black,inner sep=0.45pt] (C) {};%
\node at (2,0)  [draw,circle,fill=black,inner sep=0.45pt] (D) {};%
\draw (A) -- (B); %
\draw (B) -- (C); %
\draw (C) -- (A); %
\draw (C) -- (D); %
\end{tikzpicture}}}
\def\fourcycle{{\begin{tikzpicture}[scale=0.11]%
\node at (0,0) [draw,circle,fill=black,inner sep=0.45pt] (A) {};%
\node at (1,0) [draw,circle,fill=black,inner sep=0.45pt] (B) {};%
\node at (1,1) [draw,circle,fill=black,inner sep=0.45pt] (C) {};%
\node at (0,1) [draw,circle,fill=black,inner sep=0.45pt] (D) {};%
\draw (A) -- (B); %
\draw (B) -- (C); %
\draw (C) -- (D); %
\draw (D) -- (A); %
\end{tikzpicture}}}
\def\threepath{{\begin{tikzpicture}[scale=0.11]%
\node at (0,0) [draw,circle,fill=black,inner sep=0.45pt] (A) {};%
\node at (1,0) [draw,circle,fill=black,inner sep=0.45pt] (B) {};%
\node at (1,1) [draw,circle,fill=black,inner sep=0.45pt] (C) {};%
\node at (0,1) [draw,circle,fill=black,inner sep=0.45pt] (D) {};%
\draw (A) -- (B); %
\draw (B) -- (C); %
\draw (D) -- (A); %
\end{tikzpicture}}}
\def\triangle{{\begin{tikzpicture}[scale=0.15]%
\node at (0,0) [draw,circle,fill=black,inner sep=0.45pt] (A) {};%
\node at (0.5,0.79) [draw,circle,fill=black,inner sep=0.45pt] (B) {};%
\node at (1,0) [draw,circle,fill=black,inner sep=0.45pt] (C) {};%
\draw (A) -- (B); %
\draw (B) -- (C); %
\draw (C) -- (A); %
\end{tikzpicture}}}
\def\threestar{{\begin{tikzpicture}[scale=0.15]%
\node at (0,0) [draw,circle,fill=black,inner sep=0.45pt] (A) {};%
\node at (0.5,0.9) [draw,circle,fill=black,inner sep=0.45pt] (B) {};%
\node at (1,0) [draw,circle,fill=black,inner sep=0.45pt] (C) {};%
\node at (0.5,0.35) [draw,circle,fill=black,inner sep=0.45pt] (D) {};%
\draw (A) -- (D); %
\draw (B) -- (D); %
\draw (C) -- (D); %
\end{tikzpicture}}}
\def\twostar{{\begin{tikzpicture}[scale=0.15]%
\node at (0,0.79) [draw,circle,fill=black,inner sep=0.45pt] (A) {};%
\node at (0.5,0) [draw,circle,fill=black,inner sep=0.45pt] (B) {};%
\node at (1,0.79) [draw,circle,fill=black,inner sep=0.45pt] (C) {};%
\draw (A) -- (B); %
\draw (B) -- (C); %
\end{tikzpicture}}}
\def\twostarb{{\begin{tikzpicture}[scale=0.15]%
\node at (0,0.79) [draw,circle,fill=black,inner sep=0.45pt] (A) {};%
\node at (0.5,0) [draw,circle,fill=black,inner sep=0.45pt] (B) {};%
\node at (1,0.79) [draw,circle,fill=black,inner sep=0.45pt] (C) {};%
\draw (A) -- (B); %
\draw (C) -- (A); %
\end{tikzpicture}}}
\def\twostarc{{\begin{tikzpicture}[scale=0.15]%
\node at (0,0.79) [draw,circle,fill=black,inner sep=0.45pt] (A) {};%
\node at (0.5,0) [draw,circle,fill=black,inner sep=0.45pt] (B) {};%
\node at (1,0.79) [draw,circle,fill=black,inner sep=0.45pt] (C) {};%
\draw (B) -- (C); %
\draw (C) -- (A); %
\end{tikzpicture}}}
\def\edge{{\begin{tikzpicture}[scale=0.12]%
\node at (0,0) [draw,circle,fill=black,inner sep=0.45pt] (A) {};%
\node at (0.8,1) [draw,circle,fill=black,inner sep=0.45pt] (B) {};%
\draw (A) -- (B); %
\end{tikzpicture}}}
\begin{document}

\title{\sc\bf\large\MakeUppercase{Higher-order fluctuations in~dense~random~graph~models}}
\author{\sc Gursharn Kaur\footnote{Department of Statistics and Applied Probability, National University of Singapore, 6 Science Drive 2, Singapore 117546, \url{stagk@nus.edu.sg}} \and \sc Adrian R\"ollin\footnote{Department of Statistics and Applied Probability, National University of Singapore, 6 Science Drive 2, Singapore 117546, \url{adrian.roellin@nus.edu.sg}}}

\date{\itshape National University of Singapore}

\maketitle

\begin{abstract}
\noindent Our main results are quantitative bounds in the multivariate normal approximation of \emph{centred} subgraph counts in random graphs generated by a general graphon and independent vertex labels. We are interested in these statistics because they are key to understanding fluctuations of regular subgraph counts --- a cornerstone of dense graph limit theory.  We also identify the resulting limiting Gaussian stochastic measures by means of the theory of generalised $U$-statistics and Gaussian Hilbert spaces, which we think is a suitable framework to describe and understand higher-order fluctuations in dense random graph models. With this article, we believe we answer the question ``What is the central limit theorem of dense graph limit theory?''. We complement the theory with some statistical applications to illustrate the use of centred subgraph counts in network modelling.
\end{abstract}

\section{Introduction}

Since the seminal paper of \cite{Lovasz2006} on dense graph limit theory, a considerable amount of literature devoted to this topic has been published. A book-length treatment was given by \cite{Lovasz2012}, and the theory has been extended to related models, such as sparse graphs by \cite{Bollobas2009}, \cite{Borgs2014a,Borgs2014b}, \cite{Caron2017},  \cite{Borgs2017} and others, multi-graphs by \cite{Rath2012a} and \cite{Rath2012}, graphon-valued stochastic processes by \cite{Athreya2019}, and permutations by \cite{Hoppen2011} to name a few.

Much of the literature is concerned with what could generally be referred to as \emph{laws of large numbers}, where the main interest lies in describing the limiting objects upon appropriate scaling as some number~$n$ that captures the size of the model --- for example, the number of vertices of a graph --- tends to infinity. In many applications, the limiting objects are deterministic, since the randomness in the model ``averages out'', like in the case of the fraction of heads in a sequence of independent fair coin tosses. And if the limiting objects are random, then  typically because of a phenomenon related to \emph{de Finetti's Theorem} in the sense that the randomness left in the limit can be thought of as being distinct from the randomness describing the fine details of the model. 
In the case of dense graph limit theory, this phenomenon is captured by the Aldous-Hoover theory of infinite exchangeable arrays; see \cite{Diaconis2008}.

In analogy to the classical \emph{Law of Large Numbers} for sums of independent random variables, it is natural to ask about fluctuations \emph{around} the limits, which in the classical case is captured by the \emph{Central Limit Theorem}. This is of profound importance, since statistical inference is based on exactly this kind of fluctuations. But despite the large literature on dense graph limit theory, we are not aware of any attempts made to develop a higher-order fluctuation theory for random graph models, neither in the dense nor sparse regime. 

There have been some recent efforts to understand the subgraphs counts in the context of graphons and dense graph limit theory. \cite{Hladky2019} analysed the limiting distributions of $r$-clique counts of a random graph obtained through sampling from a graphon (which is our model $\IG(n,\kappa)$ below), and \cite{Chatterjee2021} generalised the results to arbitrary subgraphs. \cite{Maugis2020} analysed localised versions, where the counts are not global, but only over one specific vertex. Their results yield in essence that the scaling and limiting distribution depends on specific properties of the graphon, and this is intimately related to the work of \cite{Janson1991} on $U$-statistics. What makes subgraph counts problematic as test statistics is the fact that it is not immediately clear what is actually being tested (in other words, what aspects of the model the dominating fluctuations represent), and how different subgraph counts are related to each other, which is crucial when performing multiple test over different subgraph counts.

What we propose in this article is not to use subgraph counts as test statistics directly, but use more fundamental statistics --- centred subgraph counts --- which themselves completely determine the fluctuations of subgraph counts, which are orthogonal to each other, and which are jointly Gaussian in the limit with a straightforward covariance structure. The latter in particular allows for a proper correction when performing multiple tests. We give a rather complete description of these statistics in the dense case for models where vertices have independent labels, and conditionally on the vertex labels, edges are sampled independently of each other with probabilities given by a graphon. This model is the workhorse of dense graph limit theory, although in this article, we generalise this to sampling schemes where vertex labels need not be identically distributed. We believe the latter is an important contribution and covers the important case where vertex labels are fixed and arranged on an equally spaced lattice. 

As mentioned, the key to understanding all fluctuations is to analyse \emph{centred} subgraph counts rather than regular subgraph counts, and we are not the first to do so. Centred subgraph counts were studied in depth by \cite{Janson1994}, where the normal limits were shown using martingale methods, and by \cite{Janson1997}, who used the method of moments. \cite{Fang2012a} studied statistics similar to centred subgraph counts to construct a test whether a given graph is compatible with a constant graphon, and \cite{Bubeck2016} used centred triangle counts to construct a test for dimensionality in geometric random graphs; see also \cite{Gao2017a,Gao2017}. 

As we will argue in the next section, the mathematical framework of generalised $U$-statistics can be used to describe the fluctuations in dense graph sequences. Generalised $U$-statistics were introduced by \cite{Janson1991} to understand fluctuations of subgraph counts in the Erd\H{o}s-R\'enyi random graph and related models, and a more comprehensive treatise was given by \cite{Janson1994,Janson1997}. In particular, using the framework of \emph{Gaussian Hilbert spaces}, \cite{Janson1997} was able to describe the Gaussian limiting objects arising from generalised $U$-statistics, although his description is rather abstract and not easily interpretable in the context of dense graph limit theory. 

Our contribution is to modify the approach of \cite{Janson1997} in such a way that it becomes clearer what the limiting Gaussian Hilbert spaces are and such that it applies to non-identically distributed vertex labels, and we complement the theory with a multivariate normal approximation theorem for smooth and non-smooth test functions, which is based on Stein's method. Incidentally, none of the existing approximation theorems in the literature seem to be applicable to the present situation due to the fact that the summands in our test statistics are uncorrelated, a case that has drawn surprisingly little attention in the literature so far. Although subgraph counts can be handled using Stein's method, as was shown by \cite{Barbour1989} for smooth metrics, by \cite{Rollin2012a} for total variation and local limit metrics, by \cite{Rollin2017}, \cite{Krokowski2015} and \cite{Privault2020} for the Kolmogorov metric, centred subgraph counts, which are sums of uncorrelated but not independent random variables, cannot be handled with these approaches. The only result in this direction we are aware of is that of \cite{Fang2012a}, who considered bi-variate normal approximation for related sums of uncorrelated random variables in the case of constant graphons. 

\subsection{The basic decomposition of subgraph counts --- an example}\label{sec1-1}

\def\stk#1#2#3{\begin{tikzpicture}[scale=0.25]
	\def\s{#1}
	\def\sm{#2}
	\coordinate (e1) at (10,0);
	\coordinate (e2) at (0,10);
	\coordinate (u1) at ($(e1)+(e2)$);
	\coordinate (u2) at ($\s*(e1) + \sm*(e2)$);
	\coordinate (u3) at ($\s*(e2) + \sm*(e1)$);
	\draw (0,0) -- (e1) -- ($(e1) + (e2)$) -- (e2) -- (0,0);
	\draw ($\s*(e2)$) -- ($\s*(e2) + (e1)$);
	\draw ($\s*(e1)$) -- ($\s*(e1) + (e2)$);
	\draw ($.5*\s*(u1)$) node {$\alpha$};
	\draw ($\s*(u1) + .5*\sm*(u1)$) node {$\beta$};
	\draw ($\s*(e2) + .5*(u2)$) node {$\delta$};
	\draw ($\s*(e1) + .5*(u3)$) node {$\delta$};
	\draw ($\s*(e1)$) node[yshift=-13pt] {#3};
\end{tikzpicture}
}

\begin{figure}
\centering
\stk{.6}{.4}{$\strut\gamma$}
\caption{\label{fig1} 
The $2\times2$ graphon $\kappa$ defined in \eq{1g}.}
\end{figure}

Before elaborating on the general theory, we first illustrate what a decomposition of a subgraph into orthogonal components looks like in the simple case of a $2\times2$ block graphon, also called stochastic block model. This model is general enough to illustrate the main points, but also simple enough to work out the details, at least for simple subgraphs. 

First, fix constants $\alpha,\beta,\delta\in[0,1]$ and $\gamma\in(0,1)$. Then let $\kappa:[0,1]^2\to[0,1]$ be the graphon defined as
\ben{\label{1g}
  \kappa(x,y)
  =
  \begin{cases}
  \alpha & \text{if $x,y\leq \gamma$,} \\
  \delta & \text{if $x\leq \gamma < y$ or $y\leq \gamma < x$,} \\
  \beta & \text{if $x,y>\gamma$,} \\
  \end{cases}\qquad \qquad\text{for $x,y\in[0,1]$.}
}
This graphon is illustrated in Figure~\ref{fig1} and represents a graph with two communities with connection probability $\alpha$ and $\beta$ within the respective communities, and $\delta$ across the two communities. 

We now generate a random graph $G_n$ on $n$ vertices in the usual way. Let $U_1,\dots,U_n$ be independent random variables distributed uniformly on $[0,1]$, and conditionally on $U_i$ and $U_j$, connect vertices $i$ and $j$ with probability $\kappa(U_i,U_j)$, independently of all else. It is clear that the probability of a vertex belonging to the first community is $\gamma$ and the probability it belongs to the second community is $1-\gamma$. We denote by $Z_{i}= \I[U_i\leq \gamma]$ the indicator that vertex $i$ belongs to the first community, and by $Y_{ij}$ the indicator that $i$ and $j$ are connected. 

Now, to start with, consider the so-called \emph{edge density}
\be{
  t^{\inj}_\edge(G_n) = \frac{1}{n(n-1)}\sump_{i_1, i_2} Y_{i_1i_2},
} 
where sum ranges over all vertices and the prime in the double sum indicates exclusion of the diagonal cases $i_1=i_2$ as usual. 
With $\^Y_{ij}  = Y_{ij} - \kappa(U_i,U_j)$, it is straightforward to deduce the decomposition
\besn{\label{1ca}
  t^{\inj}_\edge(G_n) 
  & = \frac{1}{n(n-1)}\sump_{i_1, i_2} \^Y_{i_1i_2}+ \frac{1}{n(n-1)}\sump_{i_1,i_2}\bclr{\kappa(U_{i_1},U_{i_2})-\-\kappa} + \-\kappa,
}
where $\-\kappa = \IE\kappa(U_1,U_2) =  \alpha\gamma^2+2\delta\gamma(1-\gamma)+\beta(1-\gamma)^2$. Now, the second sum in~\eq{1ca} itself is a $U$-statistic, making further decomposition necessary. To this end, we write
\besn{\label{1da}
  \kappa(U_{i},U_{j})-\-\kappa 
  &= \rho_1(\^Z_{i} + \^Z_{j})+\rho_2\^Z_{i}\^Z_{j},
}
where
\ben{
  \^Z_i = Z_i -\gamma,\qquad
  \rho_1 = \alpha \gamma-\beta (1-\gamma)+(1-2 \gamma)\delta,
  \qquad
  \rho_2 =  \alpha+\beta-2\delta.
}
Using \eq{1da} on the second sum in~\eq{1ca} and a tedious exercise of adding, subtracting and rearranging terms, as well as observing that $\^Z_i^2 = (1-2\gamma)\^Z_i+\gamma(1-\gamma)$, we can write
\be{
  \sump_{i_1,i_2}\bclr{\kappa(U_{i_1},U_{i_2})-\-\kappa} 
   =  \bclr{\beta-\alpha +2n\rho_1}
      \sum_{i}\^Z_{i}+\rho_2\bbbcls{\bbbclr{\sum_{i}\^Z_{i}}^2 - n\gamma(1-\gamma)}.
}
Thus, we arrive at a complete decomposition of the form
\besn{\label{1db}
  t^{\inj}_\edge(G_n) 
  & = \-\kappa
  +\frac{2n^{1/2}\rho_1 W}{n-1}
  + \frac{\rho_2\bclr{W^2 - \gamma(1-\gamma)}}{n-1}
  + \frac{2^{1/2}V_{\edge,1}}{n^{1/2}(n-1)^{1/2}} 
  +\frac{(\beta-\alpha) W}{n^{1/2}(n-1)},
}
where
\ben{\label{1e}
W = n^{-1/2}\sum_{i} \^Z_i, 
\qquad 
V_{\edge,1} = \sbinom{n}{2}^{-1/2}\sum_{i_1 < i_2} \^Y_{i_1i_2}.
}
There are multiple reasons why this decomposition is useful. First, both $W$ and~$V_{\edge,1}$ are centred and uncorrelated random variables, and moreover, they themselves are sums of uncorrelated random variables; this is true if even the $U_i$ are not identically distributed, so long as they are independent. Hence, the variance and covariance structure is straightforward to calculate, and with the normalisations given above and assuming the $U_i$ are identically distributed, all variances are of order~$1$. Second, all quantities have Gaussian limits; for $W$, this follows easily from the classical central limit theorem, but it is also not difficult to prove for $V_{\edge,1}$ using Stein's method or the method of moments. This fact also implies the limit for $W^2$, namely a $\chi_1^2$-distribution. Third, it is now straightforward to read off the limiting behaviour of $t^{\inj}_\edge(G)$ from this decomposition (upon appropriate scaling):
\begin{enumerate}
\item $\rho_1\neq 0$:  The second term in \eq{1db} dominates and the limit is Gaussian.
\item $\rho_1 = 0$ and $ \rho_2 \neq 1$: The third and fourth terms in \eq{1db} dominate and the limit is the weighted sum of two independent random variables, one Gaussian and the other having a centred $\chi_1^2$-distribution.
\item $\rho_1 = 0$ and $ \rho_2 = 0$: The fourth term in \eq{1db} dominates and the limit is Gaussian.
\end{enumerate}
These convergence results were also obtained by \cite{Hladky2019} and \cite{Chatterjee2021}. 
Note that even in the third case, the contribution of $W$ in \eq{1db} does not vanish if $\alpha\neq\beta$, although the fluctuation only contributes to scale that is smaller than the dominating fluctuation. It is also important to recognise that this decomposition is not unique; for example, since
\be{
  \frac{1}{n-1} = \frac{1}{n}+\frac{1}{n(n-1)},
}
fluctuations at one scale can always be slightly rescaled and change the composition of fluctuations at another scale.

This simple example already reveals the subtle nature of subgraph counts under inhomogeneous sampling schemes, even for just the edge density. It also shows the main disadvantage of modelling vertex labels $U_i$ as random: In general, the subgraph counts are dominated by the group labels, rather than the randomness in the edges. This is usually not a desired property if testing a graph model against network data.

The case of triangles $t^{\inj}_\triangle(G_n) = \frac{1}{(n)_3} \sump_{i_1,i_2,i_3} Y_{i_1i_2}Y_{i_2i_3}Y_{i_1i_3}$ is much more involved and tedious to deduce, and we therefore only give the final decomposition (for multiple sums, the prime indicates exclusion of any set of indices where at least two indices coincide). We have
\be{\label{1cb}
   t^{\inj}_\triangle(G_n) = c_1 +  R_{0.5}+R_{1.0}+R_{1.5}+R_{2.0}+R_{2.5}
}
where
\ba{
 R_{0.5} &=\frac{c_2 W}{n^{1/2}}, \\
  R_{1.0}&= \frac{c_3 (W^2 -\gamma(1-\gamma))}{n-1} +\frac{c_4V_{\edge,4}+c_5(V_{\edge,2}+V_{\edge,3})+c_6 V_{\edge,1}}{n^{1/2}(n-1)^{1/2}},\\   
  R_{1.5}&=\frac{c_7W}{n^{1/2}(n-1)} + \frac{n^{1/2} c_8\bclr{W^3 - n^{-1/2} \gamma(1-\gamma)  (1-2 \gamma )}}{(n-1)(n-2)}\\
  &\quad + \frac{c_9\bclr{V_\triangle+V_{\twostar,1}+V_{\twostar,2}+V_{\twostar,3}}}{n^{1/2}(n-1)^{1/2}(n-2)^{1/2}} +\frac{c_{10}V_{\edge,1}W + c_{11}(V_{\edge,2}+V_{\edge,3})W +c_{12}V_{\edge,4}W}{(n-1)^{1/2}(n-2)},\\
  R_{2.0}&=\frac{c_{13}V_{\edge,1}+c_{14}(V_{\edge,2}+V_{\edge,3})+2 c_{15} V_{\edge,4}}{n^{1/2}(n-1)^{1/2}(n-2)}
  + \frac{c_{16}(W^2 -\gamma(1-\gamma))}{(n-1)(n-2)},\\ 
  R_{2.5}&= \frac{c_{17}W}{n^{1/2}(n-1)(n-2)},
}
with $V_{\edge,2}$ and $W$  as in \eq{1e} and with

\bgn{
  V_{\edge,2} = \sbinom{n}{2}^{-1/2}\sum_{i < j} \^Z_{i} \^Y_{ij},\quad
  V_{\edge,3} = \sbinom{n}{2}^{-1/2}\sum_{i < j} \^Z_{j} \^Y_{ij},\quad
  V_{\edge,4} = \sbinom{n}{2}^{-1/2}\sum_{i < j} \^Z_{i}\^Z_{j} \^Y_{ij},\notag\\
  V_{\twostar,1} = \sbinom{n}{3}^{-1/2}\sum_{i < j<k} \kappa(U_{i},U_{k}) \^Y_{ij}\^Y_{jk},\quad
  V_{\twostar,2} = \sbinom{n}{3}^{-1/2}\sum_{i < j<k} \kappa(U_{j},U_{k}) \^Y_{ji}\^Y_{ik},\notag\\
  V_{\twostar,3} = \sbinom{n}{3}^{-1/2}\sum_{i < j<k} \kappa(U_{i},U_{j}) \^Y_{ik}\^Y_{kj},\quad
  V_\triangle = \sbinom{n}{3}^{-1/2}\sum_{1i < j<k} \^Y_{ij}\^Y_{jk}\^Y_{ik}\notag\\[-2ex]\label{1b}
}
(the values of the constants $c_1$ to $c_{17}$ can be found in the Appendix); these results are again consistent with \cite{Hladky2019} and \cite{Chatterjee2021}. Note also that all these quantities are again uncorrelated and themselves sums of uncorrelated random variables, and they are scaled to be of order 1. We have arranged the terms so that $R_{\alpha}$ has standard deviation of order $n^{-\alpha}$. What our main result, Theorem \ref{thm1}, says is that all the quantities arising in such a decomposition are jointly close to a multivariate normal distribution that has a straightforward covariance structure, which is why we believe they are better suited for statistical applications than subgraph counts.

Note that explicit decompositions like the ones presented above are possible whenever $\kappa(U_i,U_j)$ can be written as sums and products of random variables involving the individual $U_i$ like we did in \eq{1da} for the $2\times2$ block graphon. This is possible in particular whenever $\kappa$ is piece-wise constant, that is, is of block form, and in principle one could construct an algorithm that derives such decompositions explicitly for any subgraph density and any block graphon. In general, though, \eq{1da} has to be replaced by an approximation, and that will be the content of Lemma~\ref{lem2}. 

In the case where the subgraph is not connected, one can always decompose the subgraph density into sums and products of subgraph densities of connected graphs. For example, if $\inj(F,G)$ denotes the number of injective homomorphisms from $F$ to $G$, we have
\be{
  \inj(\triangleedge,G) = \inj(\triangle,G)\inj(\edge,G)-6\inj(\triangle,G)-6\inj(\triangleappendix,G)
}
so that
\ben{\label{1a}
  t^{\inj}_{\triangleedge}(G) = \frac{n(n-1)}{(n-4)(n-5)} t^{\inj}_\triangle(G)\,t^{\inj}_\edge(G)
  - \frac{6}{(n-4)(n-5)}t^{\inj}_\triangle(G) - \frac{6}{n-5} t^{\inj}_\triangleappendix(G).
}
Hence, densities of connected subgraphs tell in essence the whole story.

\subsection{Preliminaries on dense graph limit theory}

In what follows, all graphs are assumed to be simple and finite, without loops. Consider a graph $G_n$ on the vertex set $[n]\coloneqq\{1,\dots,n\}$. For any graph $F$ on $k$ vertices, the \emph{homomorphism density of $F$ in $G_n$} is defined as 
\be{
t_F(G_n) = \frac{ \hom(F,G_n)}{n^k},
}
where $\hom(F,G_n)$ is the number of graph homomorphisms from $F$ to $G_n$. A sequence of graphs $G_1,G_2,\dots$ is called \emph{dense}, if the number of edges $e(G_n)\asymp n^2$, and it is called convergent if $\lim_{n\toinf}t_F(G_n)$ exists for all $F$. \cite{Lovasz2006} showed that if $G_1,G_2,\dots$ is a convergent dense graph sequence, then there exists a symmetric measurable function $\kappa:[0,1]^2\to[0,1]$ such that
\ben{\label{1}
  \lim_{n\toinf} t_F(G_n) = t_F(\kappa)\coloneqq \int_{[0,1]^k} \prod_{v\overset{F}{\sim} w}\kappa(x_v,x_w)dx_1\cdots dx_k,
}
where $\prod_{v\overset{F}{\sim} w}$ denotes the product over all pairs of vertices $\{v,w\}$ that are connected in $F$.
Such functions are generally referred to as \emph{graphons}, but \eqref{1} is only enough to determine $\kappa$ up to measure-preserving transformations of $[0,1]$, so the actual space of limiting objects is the equivalence class of graphons with the same values of $t_F(\kappa)$ for all $F$. What makes the representation of the limits appealing is that finite graphs can easily be embedded in the space of graphons by representing the adjacency matrix of a graph as a $0$-$1$-valued function on $[0,1]^2$ in the canonical way. If $G$ is a graph and $\kappa$ the corresponding induced graphon, it is not difficult to see that $t_F(G)=t_F(\kappa)$ for all $F$. 

In the context of graphs, the more natural objects to study are the \emph{injective} homomorphism densities, defined as 
\be{
t^{\inj}_F(G_n) = \frac{ \inj(F,G_n)}{(n)_k},
}
where $\inj(F,G_n)$ is the number of \emph{injective} homomorphisms from $F$ to $G_n$ and where $(n)_k=n(n-1)\cdots(n-k+1)$. An approximate relation between $t_F(G_n)$ and $t^{\inj}_F(G_n) $ is given by the inequality
\ben{\label{2}
\abs{t^{\inj}_F(G_n) - t_F(G_n)}\leq \frac{\binom{k}{2}}{n}.
}

Thus, $\lim t_F(G_n)=t_F(\kappa)$ if and only if $\lim t^{\inj}_F(G_n)=t_F(\kappa)$, and so from the point of view of dense graph limits, there is no difference between considering $t^{\inj}_F(G_n)$ instead of $t_F(G_n)$. However, higher-order fluctuations of these statistics are of smaller order than $n^{-1}$, and so \eqref{2} is not informative for such purposes. In this article, we will only focus on~$t^{\inj}_F(G_n)$, but results can be translated in principle via certain identities, relating the numbers of homomorphisms and injective homomorphisms although the formulas are not straightforward; see for example \cite[Section~5.2.3.]{Lovasz2012}.

\subsection{A simple (and naive) central limit theorem}

In order to motivate much of the remainder of this article, and in particular justify the expression ``higher-order'' in the title rather than just ``second-order'' as one would naturally expect from the analogy with the classical central limit theorem, we start with a heuristic analysis of the workhorse model of dense graph limit theory. Let $\kappa$ be a graphon and $U=(U_1,U_2,\dots, U_n)$ be a sequence of independent random variables, each distributed uniformly on $[0,1]$, and given $U$, let $Y_{ij}=1$ with probability $\kappa(U_i,U_j)$ and $Y_{ij}=0$ with probability $1-\kappa(U_i,U_j)$ for all $1\leq i < j\leq n$. Let $G_n$ be the graph on the vertex set $[n]$, where $i<j$ are connected if $Y_{ij}=1$ and left unconnected otherwise. We denote the resulting random graph model by $\IG(n,\kappa)$. \cite{Lovasz2006} proved the basic law of large numbers of dense graph limit theory, which states that such $G_n$ converges to $\kappa$ almost surely as $n$ tends to infinity.

The case of the Erd\H{o}s-R\'enyi random graph is the special case $\kappa\equiv p$ for some $0\leq p\leq 1$, which we assume for now. The first-order behaviour is then given by $t^{\inj}_F(G_n)\to p^{e(F)}$, where $e(F)$ is the number of edges in $F$. Moreover, it is easy to see from \eqref{1} that if~$F$ consists of connected components $F_1,\dots,F_m$, then for any graph $G$ we have 
\ben{\label{3a}
   t_F(G)= \prod_{i=1}^m t_{F_i}(G);
}
hence, it is enough to consider the fluctuations of $t_F(G)$ and $t^{\inj}_F(G)$ for \emph{connected}~$F$ (although for $t^{\inj}_F(G)$, a clean identity such as \eq{3a} that does not involve $n$ does not exist, as is apparent from \eq{1a}). Now, the second order fluctuations of $t^{\inj}_F(G)$ are not difficult to describe (see \cite{Janson1991} for the general statements and \cite{Reinert2010} for rates of convergence in some special cases). Let $K_2$ be the one-edge graph on two vertices; then, 
\ben{\label{3b}
  \Cor\bclr{t^{\inj}_{K_2}(G_n),t^{\inj}_F(G_n)}\to1, \qquad n\toinf.
}
This means that the second-order behaviour of all subgraph counts is asymptotically determined by the total number of edges. More specifically,  
\ben{\label{3}
  c_F\, n\bclr{ t^{\inj}_F(G_n)-p^{e(F)}}\approx n\bclr{t^{\inj}_{K_2}(G_n)-p}\approx \N(0,2p(1-p)),
}
where $c_F$ is some combinatorial constant depending only on $F$, and where $\N(\mu,\sigma^2)$ denotes the normal distribution with respective mean and variance. Note that \eq{3b} and \eq{3} remain true for general $F$, not just connected $F$, since even if $F$ is not connected, the quantities $t_{F_i}(G)$ in \eq{3a} are centred around positive constants, so that $t_{F}(G)$ is dominated by a linear combination of the $t_{F_i}(G)$.

Now, we can consider a more refined view of the normal distribution appearing in \eqref{3} as follows. If $\kappa_n$ denotes the $0$-$1$-graphon induced by the adjacency matrix of $G_n$, we can analyse the centred and scaled graphon measure $\^Z_n(dz)=n(\kappa_n(z)-p)dz$ for $z\in \cD_2$, and think of it as converging weakly to a white noise process  $Z_2$ living on $\cD_2 = \{(y_1,y_2)\in [0,1]^2:y_1<y_2\}$ and having infinitesimal variance $p(1-p)dy$ (see next section for exact definitions). That is, for any weight function $\phi\in L_2(\cD_2)$, 
\ben{\label{4}
  \int_{\cD_2}\phi(y)\,\hat Z_n(dy) \tolaw 
  \int_{\cD_2}\phi(y)\,Z_2(dy) \sim N(0,\norm{\phi}^2_{p,2}),
}
where $\norm{\phi}_{p,2}^2 = \int_{\cD_2} \phi(y)^2\,p(1-p)\,d y$, and this result can  easily be established for multiple $\phi$ simultaneously (see \eqref{8} for precise definition of the stochastic integral). We use the term ``white noise'' loosely here, but in the next section, we will refer to  $Z_2$ more appropriately as ``Gaussian stochastic measure'', since the term ``white noise'' has a more specific meaning in Hilda calculus; see, for example, \cite{DiNunno2009} for an excellent introduction. Further embellishments of this result could be considered, such as the convergence of the integrated process 
\be{
  \^Z_n(x,y) = \int_0^x\int_y^1 \^Z_n(du,dv), \qquad (x,y)\in \cD_2,
} 
to a corresponding Brownian sheet on $\cD_2$ (this requires a consistent ordering of the vertices, though). 

Even in this refined view, the main deficiency remains, namely that the only randomness surviving the limiting procedure is that of the number of edges, albeit now with a description at a local level. For general graphons $\kappa$, this phenomenon of loss of randomness becomes even more pronounced. As we will see, for non-constant graphons, subgraph counts are dominated by functions of the form $\sum_{i=1}^n \psi(U_i)$; that is, the randomness coming from the vertex labels dominates, and no information about the edges in the graph survives when taking limits.

While from the point of view of the classical Central Limit Theorem this could be seen as the end of the story, we have not taken into account the fact that underlying all of the graph statistics are so-called \emph{generalised $U$-statistics}. Such statistics have a much richer structure of fluctuations than sums of independent random variables. And while there is no canonical third and even higher-order fluctuation theory for sums of independent random variables, since it is not possible to make probabilistic sense out of ``subtracting the dominating effect and analyse what is left'' for sums of independent random variables without making use of signed measures, the situation for generalised (and regular) $U$-statistics is different, since fluctuations can happen simultaneously at different scales, and these fluctuations can be studied separately from one another. 

\subsection{Summary of main findings}

We now give a summary of the remainder of this article in the easier case where the vertex labels are fixed and lie on an equally spaced lattice. That is, $U_i\equiv i/n$ for $1\leq i\leq n$, and the edges $Y_{ij}$ are sampled independently with probability $\kappa(U_i,U_j)=\kappa(i/n,j/n)$, $1\leq i<j\leq n$. We will denote this random graph model by~$\IG_{\mathrm{lat}}(n,\kappa)$. The  picture that emerges from the fluctuations of subgraph counts is as follows. 

For each $k\geq 2$ and each connected graph $F$ on the vertex set $[k]$, consider the collection of \emph{centred} subgraph indicators
\ben{\label{5}
  X_{F,a}=\prod_{v\stackrel{F}{\sim}w}\bclr{Y_{a_va_w}-\kappa(a_v/n,a_w/n)},
  \qquad a \in \cI^n_k,
}
where $\cI^n_k = \{(a_1,\dots,a_k)\in \IN^k: 1\leq a_1<\cdots < a_k\leq n\}$, and the corresponding statistic
\ben{\label{6}
  W_{F} = {\textstyle\binom{n }{ k }^{-1/2}}\sum_{a\in\cI^n_k} X_{F,a}.
}
It turns out that $W_{F}$ converges to a Gaussian distribution, or more generally, the collection of random variables $X_F=(X_{F,a})_{a\in\cI^n_k}$, scaled and embedded appropriately, converges to a white noise process $Z_F$ that lives on the space
\be{
  \cD_k=\bclc{(x_1,\dots,x_k)\in[0,1]^k:x_1\leq \cdots\leq x_k}.
}
and has infinitesimal variance $\prod_{v\stackrel{F}{\sim}w}\kappa(u_v,u_w)\clr{1-\kappa(u_v,u_w)}\,du$; the case of $F=K_2$ is given in \eqref{4}. Moreover, the processes $Z_F$ turn out to be independent of each other for different $F$, and convergence holds jointly for any finite collection of $F$. 

Note that we consider ordered sums as in \eqref{6} in our main result, and the fields $Z_F$ are independent of each other even if the $F$ are isomorphic (but not identical). Hence, sums of the form 
\be{
  \sum_{a\in\cA^n_k} X_{F,a},
}
where $\cA^n_k\subset[n]^k$ is the set of $k$-tuples of pairwise different indices, can be analysed by considering ordered sums and then summing over all isomorphic copies of~$F$. 

Now, regular subgraph indicators 
\ben{\label{7}
  \prod_{v\stackrel{F}{\sim}w} Y_{vw}
}
can be approximated in $L_2$ by linear combinations of random variables of the form \eqref{5}, and in that sense, centred connected subgraph counts in \eqref{5} are really at the heart of all fluctuations of \eqref{7}. In some cases, the approximation is in fact an equality, which lead to the identify \eq{1ca} based on weighted sums of \eq{5}, leading to the quantities \eq{1e} and \eq{1b}. We will also show that the rate of convergence is $\bigo\bclr{n^{-1/2}}$ for smooth-enough test functions and of order $\bigo\bclr{n^{-1/(2(p+2))}}$ for the convex set distance, where $p$ is the maximal size of centred subgraphs considered, although the latter result is unlikely optimal.

While the collection of fields $(Z_F)_{F\in\cF}$, where $\cF$ is an enumeration of all connected finite graphs, can be thought of as the limiting object of some sort of ``centred and normalised'' graph, it is important to keep in mind that the limiting white noise fields are really just Gaussian stochastic measures, or equivalently, Gaussian Hilbert spaces, which are collections of Gaussian random variables and not objects in an actual Polish space for which we could define weak convergence. Thus, the results in this article only lay the foundations for such considerations; concretely, we establish convergence of finite dimensional distributions with rates of convergence. Further work is needed to turn this into a full-fledged notion of weak convergence. 

For the model $\IG(n,\kappa)$, where the $U_i$ are independent uniform random variables on $[0,1]$, the fields $Z_F$ need to be augmented by additional dimensions to take into account randomness of the vertex labels, as can be seen in \eq{1e} and \eq{1b}, where the $U_i$ do not just appear in the quantity $W$, but also act as weights in the remaining quantities $V_{\edge,2}$ and so forth. We will elaborate on this in more detail in Section~\ref{sec2}.

\subsection{Statistical applications}

We believe Janson and Nowicki's theory of generalised $U$-statistics along with our explicit multivariate normal approximation theorem open up new possibilities for inference in statistical network analysis. While subgraph counts have been used for inference (see, for instance, the discussion by \cite{OspinaForero2019}), we now make a few points on how centred subgraph counts could be used in statistical applications.

First, in the light of the results discussed in this article, we believe the model $\IG(n,\kappa)$ is not appropriate for statistical applications, since the randomness of the vertex labels and the randomness of the edges are conflated. It seems more natural to think of network data \emph{conditionally} on the vertex labels, which is equivalent to using the model~$\IG_{\mathrm{lat}}(n,\kappa)$.

Second, in order to calculate centred subgraph count statistics and use them for testing, the values $\kappa(U_v,U_w)$ need to be hypothesised \emph{a priori} for each pair of vertices $v$ and~$w$. As a result, a statistical procedure based on $\IG_{\mathrm{lat}}(n,\kappa)$ and centred subgraph counts to test whether the network is compatible with a specific graphon is in fact nothing but a test of whether a sequence of independent Bernoulli random variables $Y=(Y_{ij})_{1\leq i<j\leq n}$ are compatible with a specific model of their respective success probabilities $(p_{ij})_{1\leq i<j\leq n}$, and the choice of subgraphs $F\in\cF$ determines to what sort of deviations the test is sensitive to. For instance, a test based solely on the edge-count test statistic 
\be{
  T_\edge(Y)=\sigma^{-1}_{\edge}\sum_{1\leq i<j \leq n}\bclr{Y_{ij}-p_{ij}},
  \qquad \sigma^2_{\edge} = \sum_{1\leq i<j \leq n}p_{ij}\clr{1-p_{ij}},
}
is sensitive only to deviations of the overall edge density from that of the postulated model. By adding the two stars statistic
\bm{
  T_\twostar(Y)=\sigma^{-1}_{\twostar}\sum_{1\leq i<j<k\leq n}\bclr{Y_{ij}-p_{ij}}\bclr{Y_{jk}-p_{jk}},\\
   \sigma^{2}_{\twostar}= \sum_{1\leq i<j<k\leq n} p_{ij}\clr{1-p_{ij}}p_{jk}\clr{1-p_{jk}},
}
as well as the analogously defined statistics $T_\twostarb(Y)$ and $T_\twostarc(Y)$, a test will also detect deviations in the form of elevated levels of simultaneous presence or absence of edges with a common end point (leading to larger positive value of $T_\twostar(Y)$), but also the opposite, namely elevated presence of mutual suppression, where presence of one edge inhibits presence of another (leading to a larger negative value of $T_\twostar(Y)$). Correspondingly, higher order statistics yields information about presence of higher order dependencies among edges. What is noteworthy is that these statistics are very easy to calculate, and in particular the expressions for the variances are straightforward.

Third, if values for $p_{ij}$ cannot be obtained \emph{a priori}, centred subgraph counts can still serve as diagnostic tests after a model has been fitted by means of any other procedure. In such a case, these statistics can detect which aspects of the network have not been adequately captured by a model. For example, algorithms based on stochastic block models (see \cite{Funke2019} for a survey on inference methods) typically yield a community assignment for each vertex as well as connection probabilities between any two communities, and these values can serve as estimates for $p_{ij}$. One has to keep in mind, though, that in order to make a valid statistical inference such a procedure would require \emph{post-hoc} Type I error correction, since the network data has already been used to estimate the $p_{ij}$.
 
Last, an important, but difficult question is that of which centred subgraph counts should be used to determine whether a given network is compatible with a specific graphon, and this is related to the question of \emph{forcibility} of graphons; see \cite{Lovasz2011}. For example, for constant graphons, it is enough to consider edge counts and four-cycle counts; see \cite{Fang2012a} for a corresponding statistical procedure.

\begin{table}\centering
\small\setlength{\tabcolsep}{2pt}
\begin{tabular}{@{}lrrrrrrrrrr@{}}
\toprule
          & \multicolumn{10}{c}{Number of groups} \\
\cmidrule{2-11}
          & 1 & 2 & 3 & 4 & 5 & 6 & 7 & 8 & 9 & 10 \\
\cmidrule{1-11}\\[-1.5ex]
          & \multicolumn{10}{c}{Data simulated from $4\times 4$ stochastic block model}\\
\cmidrule(l){2-11}
$z_\edge$ 
  & 0.00  & 0.01 & 0.00 & \bf 0.17 & 0.08 & $-0.07$ & 0.01 & $-0.02$ & 0.10 & 0.04 \\
$z_\twostar$ 
  & 4.65 & 2.47 & 2.27 & $\boldsymbol{-0.57}$  & $-0.51$ & $-0.73$  & 0.16 & $-1.30$ & $-1.12$ & $-0.95$ \\
$z_\triangle$ 
  & $-18.57$ & $-0.38$ & 1.04 & \bf 0.03 & 0.22 & 0.15 & $-0.23$ & $-0.11$ & $-0.32$ & $-0.36$ \\
$z_\fourcycle$ & 57.36 & 2.43 & 0.77 & $\boldsymbol{-0.24}$ & $-0.07$ & $-0.04$ & $-0.33$ & $-0.33$ & $-0.69$ & $-0.60$ \\
$z_\threepath$ & $-5.39$ & 2.31 & 2.62 & $\boldsymbol{-0.93}$ & $-0.56$ & $-0.39$ & $-0.65$ & $-0.36$ & $-0.59$ & $-0.35$ \\
$z_\threestar$ & 1.90 & 2.42 & 1.55 &\bf 1.29 & 0.27 & 0.83 & 1.61 & 0.14 & 0.28 & 0.15 \\
\midrule
          & \multicolumn{10}{c}{Data set `\texttt{rfid}'}\\
\cmidrule(l){2-11}
$z_\edge$     & 0.00  & $-0.31$ &  \phantom{$-$}0.01 & $-0.33$ &  \bf \phantom{$-$}0.05 & $-0.01$ &  \phantom{$-$}0.13 & $-0.07$ & $-0.17$ &  \phantom{$-$}0.10 \\
$z_\twostar$  & 71.48 & 19.49 & 8.72 &  9.32 & \bf 9.87 & 6.25   & 1.02 & $-0.15$  & 2.31 & 0.68 \\
$z_\triangle$ & 11.32 & 1.49  & 1.53 &  4.40 & \bf 7.96 & 8.24   & 8.20 & 8.25 & 6.73 & 6.33 \\
$z_\fourcycle$ & 136.27 & 30.38 & 22.25 & 17.06 & \bf 13.43 & 13.15 & 12.31 & 12.44 & 10.94 & 10.86 \\
$z_\threepath$ & 44.32 & 1.11 & $-1.74$ & $-1.85$ & $\boldsymbol{-0.41}$ & 0.50 & $-1.24$ & $-1.32$ & $-0.67$ & $-1.18$ \\
$z_\threestar$ & 44.23 & $-3.36$ & 4.55 & 5.87 & \bf 7.85 & 2.97 & $-0.52$ & 0.28 &  $-0.67$ & $-0.57$ \\
\bottomrule
\end{tabular}
\caption{\label{tab1} Results of centred and standardized subgraph counts for fitted models using the function \texttt{BM\_Bernoulli} from the \texttt{R}-package `\texttt{blockmodels}'. The numbers reported are defined in \eq{100b}. The first data set is simulated from a $4\times 4$ stochastic block model with 200 vertices, where each group has 50 vertices. The second data set is the hospital encounter network `\texttt{rfid}' from the \texttt{R}-package \texttt{igraphdata}, and consists of 75 vertices representing hospital staff along with encounter counts for each pair of staff. The bold columns represent the estimated number of groups as recommended by the function \texttt{BM\_Bernoulli} using the \emph{Integrated Classification Likelihood (ICL)} criterion proposed by \cite{Biernacki2000}.}
\end{table}
To illustrate how centred subgraph counts can be used in actual applications, we have analysed two data sets of small networks. The first is a simulated network with 200 vertices, drawn from a $4\times4$ stochastic block model with connection probabilities given by the matrix
\be{
K=\begin{pmatrix}
0.45 & 0.34 & 0.82 & 0.60 \\
0.34 & 0.70 & 0.98 & 0.57 \\
0.82 & 0.98 & 0.03 & 0.82 \\
0.60 & 0.57 & 0.82 & 0.25
\end{pmatrix}.
}
Each group had 50 vertices assigned to it, after which the connections were sampled based on the probabilities $K$ and the respective groups two vertices belonged to. We then used the function `\texttt{BM\_bernoulli}' from the \texttt{R}-package `\texttt{blockmodels}' which, for each given number of groups, does both assignment of group labels to vertices  (also called `clustering', `community detection' or `community recovery') and estimation of connection probabilities $\^K$. From this, the individual connection probabilities $\^p_{ij}$ were be derived. The package \texttt{blockmodels} uses the \emph{Integrated Classification Likelihood (ICL)} criterion proposed by \cite{Biernacki2000} to choose the optimal number of groups, but we have done the centred subgraph count analysis for all group sizes from 1 to 10; the results are given in Table~\ref{tab1}. With $y=(y_{ij})_{1\leq i<j\leq n}$ denoting the edge indicators and $(\^p_{ij})_{1\leq i<j\leq n}$ denoting the estimated connection probabilities (which are a function of $y$), let $\^y_{ij} = y_{ij}-\^p_{ij}$, the numbers reported in Table~\ref{tab1} are defined as
\besn{\label{100b}
  z_\edge(y) & =\^\sigma^{-1}_{\edge}(y)\sum_{1\leq i<j \leq n}\^y_{ij},
  \qquad
  z_\triangle(y)  = \^\sigma^{-1}_{\triangle}(y) \sum_{1\leq i<j<k \leq n} \^y_{ij}\^y_{jk}\^y_{ik},\\
  z_\twostar  & = \^\sigma^{-1}_{\twostar}(y) \sum_{1\leq i<j<k \leq n}\bclr{\^y_{ij}\^y_{jk}+\^y_{ji}\^y_{ik}+\^y_{ik}\^y_{kj}},
}
where
\ba{
   \^\sigma^2_{\edge}(y) & = \sum_{1\leq i<j \leq n}\^p_{ij}\clr{1-\^p_{ij}},\quad
   \^\sigma^2_{\triangle}(y)  = \sum_{1\leq i<j<k \leq n} \^p_{ij}\clr{1-\^p_{ij}}\^p_{jk}\clr{1-\^p_{jk}}\^p_{ik}\clr{1-\^p_{ik}}\\
   \^\sigma^2_{\twostar}(y) & = \sum_{1\leq i<j<k \leq n}\bclr{
   \^p_{ij}\clr{1-\^p_{ij}}\^p_{jk}\clr{1-\^p_{jk}} + \^p_{ji}\clr{1-\^p_{ji}}\^p_{ik}\clr{1-\^p_{ik}} \\[-2ex]
   &\kern20em+\^p_{ik}\clr{1-\^p_{ik}}\^p_{kj}\clr{1-\^p_{kj}}};
}
the quantities $z_\fourcycle$, $z_\threepath$ and $z_\threestar$ are defined analogously.
For this simulated network data, it is evident that the models fitted by \texttt{BM\_bernoulli} for four or more groups are consistent with the three centred subgraph count statistics reported, and that they are not consistent when only one, two or three groups are hypothesised. 

The second data set is from a hospital, where 75 individuals of the staff were equipped with devices to record encounters between these individuals whenever the devices were in close proximity to each other. Since the network is edge-weighted, where each weight represents the number of encounters recorded over the time period of the experiment, we have converted this to a simple unweighted network. An edge is present between two individuals if they had at least one encounter. It is clear that for this real data, the fitted stochastic block models do not capture higher order dependence well. Even when allowing the clustering algorithm dividing the vertices into ten groups, the dependence captured by centred triangles is not what one would expect from a stochastic block model.

\section{Subgraph counts and generalised \texorpdfstring{$U$}{U}-statistics}\label{sec2}

In this section we review and discuss material from \cite{Janson1991} and \cite{Janson1994,Janson1997} in order to show that the existing literature on generalised $U$-statistics does provide a suitable framework to describe the fluctuations arising in standard dense graph models. The material in this section is not strictly necessary to state and prove the main results in Section~\ref{sec3}, but it gives the motivating context and associated general limit theory.

\subsection{Gaussian Hilbert Spaces}

We follow \cite{Janson1997} in essence. While Gaussian Hilbert spaces will serve as a form of limiting objects, it is important to keep in mind that at this level of abstraction, Gaussian Hilbert spaces are just collections of Gaussian random variables, and there is not really a single object taking values in a single space. Although, for instance, Brownian motion indexed by time can be seen as a Gaussian Hilbert space, it comes with additional properties such as almost-sure path-wise continuity, which is a statement about the joint distribution of uncountably many of the variables and goes beyond the general theory discussed here.

Now, let $H$ be a Hilbert space, where we denote the inner product by $\ip{\cdot,\cdot}_H$ and the resulting norm by $\norm{h}_H:=\sqrt{\ip{h,h}_H}$, although we will drop the dependence on $H$ for norms and inner products if there is no ambiguity. A \emph{Gaussian Hilbert space} indexed by $H$ is a collection of centred Gaussian variables $(Z_h)_{h\in H}$ defined on a common probability space $(\Omega, \cF, \IP)$ such that 
\be{
  \Cov(Z_h,Z_{h'})=\IE\clc{Z_hZ_{h'}} = \ip{h,h'}_H, \quad h,h'\in H.
}
Clearly, $\IE Z_h^2 = \Var Z_h = \norm{h}^2_H$. It is known that such a family can be constructed for every Hilbert space in such a way that, if $h_n \to h$ in $H$ as $n\toinf$, then $Z_{h_n}\to Z_h$ in $L_2(\Omega,\cF,\IP)$. Moreover, any countable collection of the $Z_h$ of a Gaussian Hilbert space is jointly Gaussian.

\paragraph{Gaussian stochastic measures and stochastic integrals.} Let $(M,\cM,\mu)$ be a measure space, and consider the Hilbert space $L_2(M)$ (we drop the $\sigma$-algebra and measure from the notation if it does not cause ambiguity). A Gaussian Hilbert space $(Z_\phi)_{\phi\in L_2(M)}$ can be interpreted as a \emph{Gaussian stochastic integral} on $M$ by setting
\ben{\label{8}
  \int_M \phi(x)\, Z(dx) := Z_\phi,\qquad \phi\in L_2(M).
}
Indeed, the family of random variables defined by $Z(A):= Z_{I_A}$, where $A\in\cM$ with $\mu(A)<\infty$ so that the indicator function $I_A$ is in $L_2(M)$, defines a \emph{Gaussian stochastic measure} $Z$ on $M$, which has the  following properties:
\begin{compactenum}[$(i)$] 
\item if $A\in \cM$ with $\mu(A)<\infty$, then
\be{
  Z(A)\sim N(0,\mu(A));
} 
\item if $A_1,A_2,\ldots\in M$ are disjoint sets with $\mu(A_i)<\infty$ for all $i\geq 1$, then the random variables $Z(A_1),Z(A_2),\ldots$ are mutually independent and 
\be{
  Z\bbclr{\,\bigcup_{i\geq 1}A_i} = \sum_{i\geq 1} Z(A_i)
}
(note that convergence on the right hand side is in $L_2$, but since the summands are independent, it is also almost sure by Kolmogorov's three-series theorem). 
\end{compactenum}
This justifies the notation $\int \phi\, dZ$ in \eqref{8}, since any Gaussian stochastic measure on $M$ in turn uniquely defines a Gaussian stochastic integral via the standard procedure of approximating functions in $L_2(M)$ via simple functions and taking closure. Note that a Gaussian stochastic measure can be loosely interpreted as white noise, but we will avoid this terminology for the remainder of this article for the reasons given in the previous section.

We can extend the single stochastic integral to a multiple stochastic integral
\ben{\label{10}
  \int_{M^k}\phi(x)\, Z^k(dx),  \qquad \phi\in L_2(M^k,\mu^k),
}
where $\mu^k$ is the usual product measure on the product sigma-algebra $\cM^{\bigotimes k}$. To this end, let $A_1,\dots,A_n\subset M$ be measurable and pairwise disjoint, and consider simple functions of the form
\ben{\label{11}
  \phi(x) = \sum_{i_1,\dots,i_k=1}^n \phi_{i_1,\dots,i_k} \I[x_1\in A_{i_1},\dots,x_k\in A_{i_k}],
} 
where $\phi_{i_1,\dots,i_k}$ vanishes whenever any two of the indices coincide. For such functions, the multiple integral can be defined as
\be{
  \int_{M^k}\phi(x)\, Z^k(dx) = \sum_{i_1,\dots,i_k=1}^n \phi_{i_1,\dots,i_k} Z( A_{i_1}) \cdots Z(A_{i_k}),
}
and the general case  $\phi\in L_2(M^k)$ can be obtained by approximating such functions by functions of the form \eqref{11}; we refer to \cite{Nualart2006} for details. The integral \eqref{10} turns out to be an element of the $k$th \emph{Wiener Chaos} $\cH_k$, which is the $L_2$-closure of the space generated by the random variables $\{H_k(Z_h); h\in H,\norm{h}_H=1\}$, where $H_k$ is the $k$th Hermite polynomial. 

\subsection{Gaussian limits related to sums of independent random variables}

Before detailing on the results known for generalised $U$-statistics, it is illuminating to briefly review the different types of results known for independent random variables, and how these results can be formulated in the framework of Gaussian Hilbert spaces. 

In what follows, let $X_1,X_2,\dots,$ be independent and identically distributed random variables with $\IE X_1 = 0$ and $\Var X_1 = 1$. 

\paragraph{Central Limit Theorem and Donsker's theorem.} Let $H=\IR$; the corresponding Gaussian Hilbert space can be simply constructed by taking a standard Gaussian variable $Z_1$ and letting $Z_c = cZ_1$ for $c\in \IR$. The standard CLT then yields
\ben{\label{12}
  \frac{1}{n^{1/2}}\sum_{i=1}^n cX_i \tolaw Z_c,\qquad c\in \IR.
}
We can generalise \eqref{12} and replace the constant $c$ on the left hand side by a general weight function. To this end, consider the Hilbert space $H=L_2([0,1])$ with the usual inner product $\ip{\phi_1,\phi_2}=\int_0^1 \phi_1(x)\phi_2(x) dx$, and let $(Z_\phi)_{\phi\in L_2([0,1])}$ be a corresponding Gaussian Hilbert space. For given $\phi\in L_2([0,1])$, which we assume to be continuous almost everywhere to avoid certain technical difficulties which are irrelevant for this discussion, Donsker's theorem yields
\ben{\label{13}
   \frac{1}{n^{1/2}}\sum_{i=1}^n \phi(i/n)X_i \tolaw \int_{[0,1]} \phi(x)\,Z(dx),\qquad \phi\in  L_2([0,1]),
}
and this holds jointly for any finite collection of such $\phi$. Moreover, \eqref{12} follows from \eqref{13} if we choose $\phi\equiv c$, where $c\in\IR$. It is important to stress that Donsker's theorem gives a stronger result than that. In fact, if we take $\phi_t(x)=\I[x\leq t]$, where $0\leq t\leq 1$, we can construct the Gaussian Hilbert space in such a way that the process $(Z_{\phi_t})_{0\leq t\leq 1}$ is almost surely \emph{continuous in $t$}, so that this process can be identified with standard Brownian motion $B_t = Z_{\phi_t}$ for $0\leq t\leq 1$. And so, what Donsker's theorem actually yields is that 
\ben{\label{14}
  \bbbclr{\frac{1}{n^{1/2}}\sum_{i=1}^{\floor{nt}} X_i}_{0\leq t\leq 1} \tolaw (B_t)_{0\leq t\leq 1},
}
where weak convergence is with respect to the Skorohod topology (or uniform topology if the process on the left hand side of \eqref{14} is interpolated between jumps).

\paragraph{$\boldsymbol{U}$-statistics.} Before turning to $U$-statistics, we first consider real-valued functions of the $X_i$. To this end, we may assume that the $X_i$ take values in a general measure space $\cS$, and we denote the distribution of $X_i$ by $\mu$. Consider the Hilbert space $H = L_2( [0,1]\times\cS, dt\times \mu)$  with the canonical inner product that satisfies $\ip{\phi_1\psi_1,\phi_2\psi_2}=\ip{\phi_1,\phi_2}\ip{\psi_1,\psi_2}_\mu$, where $(\phi\psi)(x,y):=\phi(x)\psi(y)$. Denoting by $L_2^\circ(\cS)$ the set of functions $\psi\in L_2(\cS)$ with $\IE \psi(X_1)=0$, and assuming again that $\phi$ is continuous almost everywhere, it can be shown that
\bmn{\label{15}
   \frac{1}{n^{1/2}}\sum_{i=1}^n \phi(i/n)\psi(X_i) \tolaw 
   \int_{[0,1]\times \cS} \phi(t)\psi(x)\,Z(dt, dx),\\
    \phi\in L_2([0,1]),\,\psi\in L_2^\circ(\cS).
}
Again, this statement is also true jointly for any finite collection of $(\phi_i,\psi_i)$. Note that $Z$ has infinitesimal variance $\mu(dx)$, since the space $L_2^\circ(\cS)$ comes with inner product $\angle{\psi_1,\psi_1}=\int_{\cS}\psi_1(x)\psi_2(x)\,\mu(dx)$, and the quantity $Z(dt\times dx)$ can be loosely interpreted as ``the normalised number of times the value $dx$ has been observed among the indices $dt$''.  Note also that restricting $\psi$ to be in $L_2^\circ(\cS)$ and not in $L_2(\cS)$ is necessary, since the measure $Z$ has more degrees of freedom than the finite-$n$ system, and thus cannot be fully observed. For example, if $X_i\sim\Be(p)$, then $\cS=\{0,1\}$, and all functions $\psi\in L_2^\circ(\cS)$ are multiples of the function $\psi(0)=-p$ and $\psi(1)=1-p$. However, the variables $Z_0=Z([0,1]\times\{0\})$ and $Z_1=Z([0,1]\times\{1\})$, while constructed to be independent, cannot be observed individually --- only their weighted sum $-pZ_0+(1-p)Z_1$ can be observed. This stems from the fact that the number of $X_i$ with value $1$ must equal $n$ minus the number of $X_i$ with value $0$, and in this sense, the Gaussian Hilbert space $L_2([0,1]\times \cS)$ is slightly too big.
 
The result for $U$-statistics can be stated without introducing a new Gaussian Hilbert space --- we only need multiple integrals over the same space. To this end, let 
\ben{\label{16}
  L_2^{\circ}(\cS^k) = \bbbclc{\psi\in L_2(\cS^k)\,:\,  \int_{\cS} \psi(x_1,\dots,x_k)\,\mu(dx_i)=0,\,1\leq i\leq k,\,(x_j)_{j\neq i}\in \cS^{k-1}}.
}
For $a=(a_1,\dots,a_k)\in\cI^n_k$, write $a/n \coloneqq(a_1/n,\dots,a_k/n)$ and $X_a=(X_{a_1},\dots,X_{a_k})$. Then, for almost everywhere continuous $\phi\in L_2(\cD_k)$,
\bmn{\label{17}
   \frac{1}{n^{k/2}}\sum_{a\in\cI^n_{k}} \phi(a/n)\psi(X_a) \tolaw \int_{\cD_k\times \cS^k} \phi(t)\psi(x)\,Z^k(dt, dx),\\[-1ex]
   \phi\in L_2(\cD_k),\,\psi\in L_2^{\circ}(\cS^k).
}
Again, this statement is true jointly for any finite collection of $(\phi_i,\psi_i)$, even with different~$k$; see \cite[Theorem~11.16]{Janson1997}. Of course \eqref{15} is just a special case of \eqref{17}, but \eqref{17} \emph{follows} in essence from \eqref{15} and the continuous mapping theorem. General functions $\psi\in L_2(\cS^k)$ can be decomposed into orthogonal elements $\psi_i\in L_2^{\circ}(\cS^i)$, and a corresponding limit result then follows from \eqref{17}, depending on the lowest-order, non-vanishing element $\phi_i$ (which in turn also determines the correct scaling to obtain a non-trivial limit); see \cite[Theorem~11.19]{Janson1997}. 

\paragraph{Generalised $\boldsymbol{U}$-statistics.}

The final extension we consider are generalised $U$-statistics, which were introduced by \cite{Janson1991}. To this end, assume the $X_i$ now take values in a space $\cS_1$ with distribution $\mu_1$, and let $(Y_{ij})_{1\leq i<j}$ be independent and identically distributed random elements taking values in a space $\cS_2$ with distribution $\mu_2$. For $a\in\cI^n_k$, we define $X_a$ as before and we let $Y_a = (Y_{a_ia_j})_{1\leq i<j\leq k}$. We now consider functions defined on the space 
\be{
  \cT_k = \cS_1^k\times \cS^{\binom{k}{ 2}}_2,\qquad \text{with measure $\mu_1^k\times \mu_2^{\binom{k}{ 2}}$,}
}
where it is understood that $\cT_1 = \cS_1$. For any function $\psi\colon \cT_k\to\IR$, we will write 
\be{
  \psi(X_a,Y_a) = \psi(X_{a_1},\dots,X_{a_k},Y_{a_1a_2},\dots,Y_{a_{k-1}a_k}).
}
Let now
\be{
  L_2^\circ(\cT_1) = \bclc{\psi\in L_2(\cT_1): \IE \psi(X_1)=0}.
}
For $k> 1$, let
\be{
  \cF^k_{-l} = \sigma\clr{X_1,\dots,X_k}\vee\sigma\bclr{Y_{ij}:\text{for all $1\leq i<j\leq k$ with $l\notin\{i,j\}$}}
}
Then, define
\ben{\label{18}
  L_2^\circ(\cT_k) = \bclc{\psi\in L_2(\cT_k): \text{$\IE\bclc{
  \psi\clr{X,Y}
  \given \cF^k_{-l}}=0$ for all $1\leq l\leq k$}}.
}
In words, $L_2^\circ(\cT_k)$ consists of all those functions that, for every $1\leq l\leq k$, vanish when being simultaneously integrated over all $Y_{ij}$ with $i=l$ or $j=l$. Then, with $Z_k$ a Gaussian stochastic measure on $L_2(\cD_k\times \cT_k)$,
\besn{\label{19}
   &\frac{1}{n^{k/2}}\sum_{a\in\cI^n_k} \phi(a/n)\psi(X_a,Y_a) \tolaw \int_{\cD_k\times \cT_k} \phi(t)\psi(x,y)\, Z_k(dt, dx , dy),\\
   &\kern20em \phi\in  L_2(\cD_k),\,\psi\in L_2^\circ(\cT_k)
}
(here, $t$ and $x$ are $k$-dimensional vectors, while $y$ is a  $\binom{k}{2}$-dimensional vector); see \cite[Theorem~11.28]{Janson1997} for general $\psi\in L_2(\cT_k)$ via orthogonal decomposition. 

\subsection{Application to centred subgraph counts}

We first apply \eqref{19} to centred subgraph counts of $\IG(n,\kappa)$. However, since the $Y_{ij}$ in $\IG(n,\kappa)$ are not independent of each other, we need to resort to an auxiliary representation. Let $(U_i)_{i\geq 1}$ and $(V_{ij})_{i,j\geq 1}$ be independent random variables uniformly distributed on $[0,1]$, hence $\cS_1= \cS_2 = [0,1]$, endowed with the Lebesgue measure. We construct a graph $G_n$ on the vertex set $[n]$ by connecting vertices $i$ and $j$ if $V_{ij}\leq \kappa(U_i,U_j)$. For a given connected graph $F$ on $k$ vertices, we consider the function
\ben{\label{20a}
  \psi_F\bclr{u,v}= \psi(u)\prod_{i\stackrel{F}{\sim} j}\bclr{ \I[v_{ij}\leq \kappa(u_i,u_j)]-\kappa(u_i,u_j)}, 
  \qquad u\in[0,1]^k,\,v\in[0,1]^{\binom{k}{ 2}}.
}
It is easy to verify that $\psi_F\in L_2^\circ(\cT_k)$, and hence
\besn{\label{20}
  &\frac{1}{n^{k/2}}\sum_{a\in\cI^n_k}\phi(a/n)\psi(U_a)\prod_{i\stackrel{F}{\sim} j}\bclr{ \I[V_{a_ia_j}\leq \kappa(U_{a_i},U_{a_j})]-\kappa(U_{a_i},U_{a_j})} \\
  &\quad\tolaw 
  \int_{\cD_k\times[0,1]^k\times[0,1]^{\binom{k}{ 2}}}\phi(t)\psi(u) \prod_{i\stackrel{F}{\sim} j}\bclr{ \I[v_{ij}\leq \kappa(u_i,u_j)]-\kappa(u_i,u_j)} \, Z_k(dt, du, dv).
}
Two comments are in place. First, the quantity $Z_k(dt, du, dv)$, in particular the $dv$-part, does not admit an intuitive interpretation, since the uniform random variables $V_{ij}$ are only used as an auxiliary tool to represent subgraph counts as generalised $U$-statistics. Evaluating the stochastic integral in \eq{20} with respect to $dv$, it is not difficult to show that integral can be written as
\be{
\int_{\cD_k\times[0,1]^k}\phi(t)\psi(u)  \, Z_k(dt, du),
}
where $Z_k$ now is a Gaussian stochastic measure on the space
\be{
  L_2\bbbclr{\cD_k\times[0,1]^k,dt\times \prod_{i\overset{F}{\sim}j} \kappa(u_i,u_j) \clr{1-\kappa(u_i,u_j)} du}.
}
Then, instead of \eq{20a}, we will consider functions of the form
\be{
  \psi_F(u,y) = \psi(u)\prod_{i\stackrel{F}{\sim} j}\bclr{ y_{ij}-\kappa(u_i,u_j)},
  \qquad u\in [0,1]^k,\,y\in [0,1]^{\binom{k }{ 2}},\,\psi\in L_2([0,1]^k),
} 
which allows to avoid the auxiliary representation via the $V_{ij}$ and use the $Y_{ij}$ directly, despite their dependence, and also allows to consider weighted edges. Hence, what we will show in the main section is that for $U$ as before, and random variables $Y_{ij}$, which are conditionally independent given $U$ and which satisfy~$\IE\clc{Y_{ij}\given U}=\kappa(U_{i},U_j)$,
\bmn{\label{21}
  \frac{1}{n^{k/2}}\sum_{a\in\cI^n_k}\phi(a/n)\psi(U_a)\prod_{i\stackrel{F}{\sim} j}\bclr{Y_{a_ia_j}-\kappa(U_{a_i},U_{a_j}) } \\
  \qquad\tolaw 
  \int_{\cD_k\times[0,1]^k}\phi(t)\psi(u)  \, Z_k(dt, du).
}
Note that the measure $Z_k$ is homogeneous over $\cD_k$; this is because the $U_i$ `average out' the differences in the variances of the $Y_{ij}$, so that the points in $\cD_k$ only see the combined variance effect across all $Y_{ij}$. This is different in $\IG_{\mathrm{lat}}(n,\kappa)$, which is not vertex-exchangeable; see Remarks \ref{rem1} and \ref{rem2} for further discussion.

Second, \eqref{20} does not cover the case where the $U_i$ are not identically distributed. This is important in particular for the model $\IG_{\mathrm{lat}}(n,\kappa)$, but our results hold in greater generality.

\subsection{An orthogonal decomposition of subgraph counts}

We now discuss how \eqref{21} can be used to understand fluctuations of subgraph counts of $\IG(n,\kappa)$, and so fix a graphon $\kappa$, let $U=(U_1, \dots, U_n) $ be a sequence of independent random variables uniformly distributed on $[0,1]$, let $Y=(Y_{ij})_{1\leq i<j\leq n}$ be conditionally independent given $U$ and distributed as before, and construct $G_n$ on $n$ vertices as before. Let $F$ be a graph on the vertex set $[k]$ and write  
\ben{\label{22}
  t^{\inj}_F(G_n) = \frac{1}{(n)_k}\sum_{a\in\cA^n_k}\prod_{i\stackrel{F}{\sim} j}Y_{ij};
}
without loss of generality, we may assume that $F$ has no isolated vertices. 
Now, the following lemma states that $t^{\inj}_F(G_n)$ can be decomposed into a sum of mostly uncorrelated centred subgraph counts, weighted by functions of the vertex labels. Some notation is needed. For a graph $H$ we denote by $\abs{H}$ the number of vertices in $H$, and for two (vertex-labelled) graphs $H$ and $H'$, we denote by $H\cup H'$ the graph where two vertices are connected if they are connected in at least one of $H$ and $H'$. For a subgraph $H\subseteq F$ and a subset $A\subseteq[k]$, we denote by $H\cup A$ the graph obtained by interpreting $A$ as the empty graph on the vertex set $A$. Moreover, we denote by $H_P$ the unique graph on the vertex set $\{1,\dots,\abs{H}\}$ that is isomorphic to $H$ and preserves the ordering of the vertex labels, and we denote by $H\subseteq' F$ that $H$ is a subgraph of $F$ and that it has no isolated vertices. Let
\ben{\label{23}
  \vartheta_H(u,y) = \prod_{i\stackrel{H}\sim j}\bclr{y_{ij}-\kappa(u_i,u_j)}.
}

\begin{lemma}\label{lem1} Let $F$ be a graph on the vertex set $[k]$ without isolated vertices. Then there are functions $\psi_{H,A}\in L_2^\circ\bclr{[0,1]^{\abs{A}}}$ for $H\subseteq'F$ and $A\subset[k]$, such that
\ben{\label{24}
  t^{\inj}_F(G_n) = \sum_{H\subseteq'F}\sum_{A\subseteq[k]}r_{H,A}(U,Y),
}
where, with $l=\abs{H\cup A}$,
\be{
  r_{H,A}(u,y) = \frac{1}{(n)_l}\sum_{a\in\cA^n_{l}} \psi_{H,A}(u_{a_{l-\abs{A}+1}},\dots,u_{a_l})\,\vartheta_{H_P}(u_{a_1},\dots,u_{a_{\abs{H}}}),
}
and such that the following holds: If $H$ and $H'$ are not isomorphic or if $\abs{A}\neq\abs{A'}$, then 
\be{
  \Cov\bclr{r_{H,A}(U,Y),r_{H',A'}(U,Y)} = 0,
} 
and if $\psi_{H,A}\not\equiv 0$, then, again with $l=\abs{H\cup A}$,
\be{
  \Var r_{H,A}(U,Y) \asymp n^{-l}.
}
\end{lemma}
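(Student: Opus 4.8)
The plan is to build the decomposition in two stages---first centring the edges, then applying a Hoeffding-type decomposition to the resulting vertex-label weights---and to read off the orthogonality and variance orders by conditioning on the labels~$U$.

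\emph{Edge expansion.} Writing $Y_{ij}=\kappa(U_i,U_j)+\bclr{Y_{ij}-\kappa(U_i,U_j)}$ in each factor of the product and expanding over subsets of edges gives
\be{
  \prod_{i\stackrel{F}{\sim}j}Y_{ij}=\sum_{H\subseteq' F}\vartheta_H(U,Y)\,\chi_H(U),\qquad
  \chi_H(u)=\prod_{ij\in E(F)\setminus E(H)}\kappa(u_i,u_j),
}
where each edge subset determines a subgraph $H$ without isolated vertices on its endpoint set $V(H)$, the factor $\vartheta_H$ is the centred product from~\eqref{23}, and $\chi_H$ depends on the vertex labels only.

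\emph{Label decomposition and reduction.} For fixed~$H$, the bounded function $\chi_H$ admits a Hoeffding (ANOVA) decomposition into orthogonal pieces $\psi_{H,A}\in L_2^\circ$, each depending on a vertex subset~$A$. The associated summand $g_{H,A}=\psi_{H,A}\,\vartheta_{H_P}$ depends only on the $l=\abs{V(H)\cup A}$ relevant vertices, so summing out the $k-l$ free vertices and using $(n)_k=(n)_l\,(n-l)_{k-l}$ turns $(n)_k^{-1}\sum_{a\in\cA^n_k}$ into $(n)_l^{-1}\sum_{a\in\cA^n_l}$. Relabelling $V(H)$ order-preservingly to $H_P$ at the leading coordinates then yields exactly the stated $r_{H,A}$.

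\emph{Orthogonality.} Conditioning on~$U$ and using conditional independence of the $Y_{ij}$ together with $\IE\bclc{Y_{ij}-\kappa(U_i,U_j)\given U}=0$, the factor $\IE\bclc{\vartheta_H(a)\vartheta_{H'}(a')\given U}$ vanishes unless the two centred edge-sets coincide as sets of index pairs, which forces $H\cong H'$; given matched edges, the $L_2^\circ$ property of the label factors forces the label supports to agree, hence $\abs{A}=\abs{A'}$. The delicate point, which I expect to be the main obstacle, is a vertex simultaneously incident to a centred edge and carrying a label factor: there the plain Lebesgue projection need not integrate to zero against the conditional edge-variance weight $\prod_{ij\in E(H)}\kappa(U_i,U_j)\bclr{1-\kappa(U_i,U_j)}$, so the label decomposition at such vertices must be carried out relative to this weight, and verifying that the resulting $\psi_{H,A}$ remain mutually orthogonal (and lie in $L_2^\circ$) is where the real work lies.

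\emph{Variance.} By the same degeneracy, $\Var r_{H,A}=(n)_l^{-2}\sum_{a,a'}\IE\bclc{g_{H,A}(U_a,Y_a)g_{H,A}(U_{a'},Y_{a'})}$ receives contributions only from fully overlapping index pairs, of which there are $\asymp n^{l}$, giving $\Var r_{H,A}=\bigo(n^{-l})$. The matching lower bound reduces to strict positivity of the leading constant $\IE\bbclc{\psi_{H,A}(U_A)^2\prod_{ij\in E(H)}\kappa(U_i,U_j)\bclr{1-\kappa(U_i,U_j)}}$, which holds whenever $\psi_{H,A}\not\equiv0$ and the edge variances do not vanish identically.
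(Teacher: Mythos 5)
You follow the paper's route step for step: your edge expansion is the paper's Lemma~\ref{lem9}, your Hoeffding/ANOVA step is the paper's appeal to \cite[Lemma~11.17]{Janson1997}, and the collapse of unused coordinates, the conditioning on $U$ for orthogonality, and the overlap counting for the variance all match. The one place where you stop short --- the ``delicate point'' you flag --- is exactly the place where the paper's proof also stops: it ends with ``The claims about covariances and variances are straightforward to check.'' You are right that they are not straightforward; in fact, for the Lebesgue-centred components that both you and the paper construct, the orthogonality claim as stated \emph{fails}. Take $F$ the two-edge path on $\{1,2,3\}$, $\kappa(u,v)=uv$, and $H=H'$ the edge $\{1,2\}$, so $\chi_H(u)=\kappa(u_2,u_3)$, whose ANOVA components include $\tsfrac14$ (for $A=\emptyset$) and $u_2/2-\tsfrac14$ (for $A'=\{2\}\subseteq V(H)$). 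After collapsing, both $r_{H,\emptyset}$ and $r_{H,\{2\}}$ are sums over $\cA^n_2$ of weights times $Y_{a_1a_2}-\kappa(U_{a_1},U_{a_2})$; conditioning on $U$ kills all cross terms except those with coinciding edges, and each surviving term contributes $\tsfrac14\IE\bclc{(U_2/2-\tsfrac14)\,\kappa(U_1,U_2)(1-\kappa(U_1,U_2))}=\tsfrac14\cdot\tsfrac1{144}$, whence
\be{
  \Cov\bclr{r_{H,\emptyset}(U,Y),\,r_{H,\{2\}}(U,Y)} = \frac{1}{288\,(n)_2}\neq 0
  \qquad\text{although } \abs{A}=0\neq 1=\abs{A'}.
}
This is precisely the mechanism you anticipated: a label coordinate sitting on a vertex of a centred edge is Lebesgue-centred but not centred against the conditional-variance weight $\kappa(1-\kappa)$.

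What the construction does deliver --- and what your conditioning argument actually proves once one small overreach is fixed --- is the weaker statement: $\Cov\bclr{r_{H,A},r_{H',A'}}=0$ whenever $H\not\cong H'$, \emph{or} whenever $\abs{H\cup A}\neq\abs{H'\cup A'}$. Indeed, matched centred edges force the two graph parts to coincide, and every label coordinate \emph{outside} the matched graph must be matched by one on the other side (integrating an unmatched one against Lebesgue measure gives zero, and it appears in no other factor); since matched graphs have equal size, unequal $l$ annihilates every cross term. Your inference ``hence $\abs{A}=\abs{A'}$'' should therefore read ``hence $\abs{H\cup A}=\abs{H'\cup A'}$'': equality of $\abs{A}$ does not follow when $A$ and $A'$ overlap the graph vertex sets differently, and the example above shows it genuinely fails. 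This corrected, different-scaling version is also all the paper uses downstream (separation of the orders of fluctuation, Lemma~\ref{lem2}, Theorem~\ref{thm1}), so the right repair is to weaken the lemma's conclusion rather than to re-engineer the decomposition; note that your proposed weighted ANOVA sits uneasily with the lemma as stated, since it demands $\psi_{H,A}\in L_2^\circ$, i.e.\ Lebesgue-centred components, and the edge-variance weight is not a product measure. Finally, your variance counting agrees with the paper's implicit argument (upper bound from full-overlap pairs, lower bound from the diagonal), up to the usual caveat that the finitely many automorphism-related terms on a single index set must not cancel and that $\kappa(1-\kappa)$ must not vanish identically.
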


The key of this decomposition is that terms with different scalings are uncorrelated, so that we can separate the different orders of fluctuations of $t^{\inj}_F(G_n)$. However, whether $r_{H,A}(U,Y)$ has a normal limit or not, depends on $H$ and $A$. Simply put, if $H\cup A$ is connected (for which it is necessary that $A\subset H$), the limit is normal, otherwise the limit is an element from a higher-order Wiener chaos. However, since each Wiener chaos itself is obtained by taking products of the underlying Gaussian Hilbert space and taking limits, we can decompose $r_{H,A}$ further, but only in an approximate sense.  

The following lemma makes this precise and states that the statistics on the right hand side of \eqref{24} can be approximated in $L_2$ by products and sums of simpler statistics to any prescribed level of accuracy. 

\begin{lemma}\label{lem2} Let $H\subseteq' F$, and let $A\subset[k]$. Let $\psi_{H,A}$ and $r_{H,A}$ be as in Lemma~\ref{lem1}. Denote by $C_1,\dots,C_r$ the connected components of $H\cup A$ and $k_1,\dots,k_r$ their respective sizes, and assume $r\geq 2$ (note that $k_1+\cdots+k_r=l$). For each $j$, let $C_j'$ be an isomorphic copy of $C_j$ on $[k_j]$. Then, for each $\eps>0$, there exists $N$ and there exist functions $\psi_{i,j}\in L_2^\circ([0,1]^{k_j})$ for $1\leq i\leq N$ and $1\leq j\leq r$, such that
\ben{\label{25}
  \IE\bbbclr{r_{H,A}(U,Y) - \sum_{i=1}^{N} \prod_{j=1}^r \frac{1}{(n)_{k_j}}\sum_{a\in\cA^n_{k_j}}\psi_{i,j}(U_a)
  \vartheta_{C_j'}(U_a,Y_a)}^2\leq \frac{\eps}{n^l},
}
for all $n$.
\end{lemma}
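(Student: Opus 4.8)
The plan is to exploit the fact that the only part of $r_{H,A}$ that genuinely couples the connected components $C_1,\dots,C_r$ of $H\cup A$ is the weight $\psi_{H,A}$. Every edge of $H$ lies inside a single component and every vertex of $A$ belongs to a single component, so $\vartheta_{H_P}$ already factorises over the components, and $\psi_{H,A}$ may be regarded as an element of $L_2^\circ\bclr{[0,1]^{\abs{A}}}=\bigotimes_{j=1}^r L_2^\circ\bclr{[0,1]^{\abs{A\cap C_j}}}$, using that degeneracy in each coordinate factorises through the tensor product. Since finite-rank tensors are dense in a Hilbert-space tensor product, for any $\eps'>0$ I can write $\psi_{H,A}=\sum_{i=1}^N\bigotimes_{j=1}^r\wt\psi_{i,j}+\rho$ with $\wt\psi_{i,j}\in L_2^\circ\bclr{[0,1]^{\abs{A\cap C_j}}}$ and $\norm{\rho}\leq\eps'$. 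Absorbing each $\wt\psi_{i,j}$ together with the edge factor of $C_j$ into a per-component weight $\psi_{i,j}$ on $C_j'$ produces the candidate summands of \eqref{25}.

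Two sources of error must then be controlled. First, replacing $\psi_{H,A}$ by its finite-rank approximation perturbs $r_{H,A}$ by the statistic built from $\rho$; since $r_{H,A}$ is linear in its weight, its variance is a quadratic form in that weight, bounded above (by the computation underlying Lemma~\ref{lem1}) by $C\norm{\rho}^2 n^{-l}\leq C(\eps')^2 n^{-l}$ \emph{uniformly} in $n$. Second, and this is the heart of the matter, I must compare the single sum $\frac{1}{(n)_l}\sum_{a\in\cA^n_l}$ in $r_{H,A}$ with the product of component sums $\prod_{j}\frac{1}{(n)_{k_j}}\sum_{a\in\cA^n_{k_j}}$ in \eqref{25}. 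Splitting the latter product according to whether the $l$ selected indices are jointly distinct, the jointly-distinct part reproduces exactly $\tfrac{(n)_l}{\prod_j(n)_{k_j}}$ times the rank-$i$ piece of $r_{H,A}$, while the remainder is a \emph{diagonal} term in which at least one index is shared between two distinct components.

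The main work is to show this diagonal remainder is negligible at scale $n^{-l}$. Each coincidence pattern identifies vertices lying in different components and thus yields a statistic supported on $l''\leq l-1$ free indices. The decisive observation is that identifying vertices across components never merges edges (distinct components share none), so every factor $y_e-\kappa$ stays centred and the summand keeps its conditional mean-zero structure, while the surviving $U$-degeneracy of the tensor factors $\wt\psi_{i,j}$ absorbs any isolated $A$-vertices. Hence each diagonal piece is again of the type covered by Lemma~\ref{lem1} with $l''\leq l-1$; its $\frac{1}{(n)_{l''}}$-normalised version has variance $\asymp n^{-l''}$, so the underlying sum has variance $\asymp n^{l''}$, and dividing instead by $\prod_j(n)_{k_j}\asymp n^{l}$ gives variance $\asymp n^{l''-2l}=\bigo\bclr{n^{-l-1}}=\lito\bclr{n^{-l}}$. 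The prefactor $\tfrac{(n)_l}{\prod_j(n)_{k_j}}-1=\bigo(n^{-1})$ similarly perturbs $r_{H,A}$ by a term of variance $\bigo\bclr{n^{-l-2}}$. Collecting the three contributions and choosing $\eps'$, hence $N$, small enough yields the bound $\eps/n^l$.

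I expect the delicate step to be the variance bookkeeping for the diagonal terms: one must verify, case by case in the coincidence pattern, precisely how much degeneracy is inherited so that the exponent $l''-2l$ is always strictly below $-l$, and one must confirm that all constants are uniform in $n$ --- in particular treating the finitely many small $n$, where the sums in \eqref{25} are short and $r_{H,A}$ may vanish, separately. The tensor-decomposition and factorisation steps are routine by comparison.
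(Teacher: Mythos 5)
Your outline does track the paper's own proof: a finite-rank tensor approximation of $\psi_{H,A}$ (the paper factors all the way down to univariate elements of $L_2^\circ([0,1])$, you stop at the component level, which is immaterial), an error bound of order $\norm{\rho}^2 n^{-l}$ for the remainder, and a comparison of the ordered sum with the product of component sums, where the jointly-distinct part matches up to a factor $1+\bigo(1/n)$ and the diagonal part must be shown negligible; the paper delegates that last step to Lemma~\ref{lem10}. The gap is precisely in your treatment of this last step. Your ``decisive observation'' --- that identifying vertices across components never merges edges, so every summand of the diagonal keeps its conditional mean-zero structure --- is false. The components share no edges as subgraphs of $[k]$, but once two index tuples $a^{(j)},a^{(j')}$ overlap in two entries, edges of \emph{different} components can be mapped onto the \emph{same} pair of indices of $[n]$, producing factors $\bclr{Y_{xy}-\kappa(U_x,U_y)}^2$, which are not conditionally centred. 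Concretely, take $F=H$ to be two disjoint edges and $A=\emptyset$, so that $\psi_{H,A}\equiv 1$, $r=2$, $k_1=k_2=2$, $l=4$: the diagonal configurations in which the two component edges coincide contribute essentially $\sum_{x<y}\bclr{Y_{xy}-\kappa(U_x,U_y)}^2$, a sum whose \emph{mean} is of order $n^2=n^{l/2}$; this is not a centred statistic of the type covered by Lemma~\ref{lem1}. After dividing by $\prod_j(n)_{k_j}\asymp n^l$ it is a deterministic bias of order $n^{-l/2}$, whose square sits exactly at the critical order $n^{-l}$ with a constant proportional to $\bclr{\IE\bclc{\kappa(U_1,U_2)(1-\kappa(U_1,U_2))}}^2$ that does not shrink as you increase $N$ or decrease $\eps'$. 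The same mechanism appears whenever two components of $H\cup A$ are isomorphic and pair up under a coincidence pattern, and the vertex analogue fails too: identifying two isolated $A$-vertices from different components produces $\psi_{i,1}(U_x)\psi_{i,2}(U_x)$, whose expectation is not killed by degeneracy of the individual factors. So the conclusion ``each diagonal piece is $\lito(n^{-l})$'' is wrong, and with it the proof.

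Note that this cannot be repaired by sharper bookkeeping alone: in the example above, with the $\psi_{i,j}$ chosen (as you propose) only to reproduce the tensor decomposition of $\psi_{H,A}$, the left-hand side of \eqref{25} is bounded \emph{below} by a positive constant times $n^{-l}$, so the bound $\eps/n^l$ is unattainable for small $\eps$ with that choice. A correct argument must pick the $\psi_{i,j}$ so as to also cancel the paired-edge bias terms --- which is possible in principle, since the bias functional $\sum_i\IE\bclc{\psi_{i,1}\psi_{i,2}\,\kappa(1-\kappa)}$ is not continuous in the $L_2$ tensor norm and can be driven to zero without disturbing the off-diagonal match --- but this is a genuinely different construction, not a refinement of yours. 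Two further remarks: first, your final $\psi_{i,j}$ (like those the paper constructs) are degenerate only in the $A\cap C_j$ coordinates, hence do not lie in $L_2^\circ([0,1]^{k_j})$ as the statement literally requires; second, you are in good company, since the two-line proof of Lemma~\ref{lem10} in the paper asserts the bound $Cn^{l-1}$ for exactly this diagonal, and that bound fails in the same example (there the difference has mean of order $n^{l/2}$, hence second moment at least of order $n^l$). Your instinct that the diagonal variance bookkeeping is the delicate step was exactly right; the structural claim you base its resolution on is false.
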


In other words, the standardised statistics $n^{l/2}r_{H,A}$ can be approximated in $L_2$ by products and sums of centred \emph{connected} subgraphs counts uniformly in $n$, and we will show that these statistics themselves all have Gaussian limits. While the overall quality of approximation of $t^{\inj}_F$ is only of order $n^{-2}$ in general, the important point here is of course that the fluctuations at different scalings are uncorrelated and become independent in the limit.

\section{Main result}\label{sec3}

We are now ready to formulate our main result, which provides bounds on the multivariate normal approximation of sums as they appear in \eqref{25}. In order to have a cleaner framework, our result will be formulated for sums over the index set $\cI^n_k$, which makes all summands  uncorrelated, but sums over $\cA^n_k$ as they appear in \eqref{25} can of course be easily computed from sums over $\cI^n_k$.

\subsection{Gaussian approximation of centred connected subgraph counts}

Let $n\geq 1$, let $\kappa$ be a graphon, and assume $\kappa\not\equiv 0$ and $\kappa\not\equiv 1$. Let $U=(U_v)_{v\in[n]}$ be independent (but not necessarily identically distributed) random variables, and given $U$, let $(Y_{vw})_{1\leq v<w\leq n}$ be random variables that are conditionally independent given~$U$ and that satisfy $\IE\clc{Y_{ij}\given U}=\kappa(U_i,U_j)$. Recall that we set $U_a=(U_{a_1},\dots,U_{a_k})$ for $a\in \cI^n_{k}$. Let $d\geq1$, and for each $1\leq i\leq d$, let $F_i$ be a connected graph on the vertex set $[k_i]$, where $k_i\geq 1$, let $\phi_i\in L_2(\cD_{k_i})$ and $\psi_i\in L_2([0,1]^{k_i})$  (note that here we do not require that $\psi_i\in L_2^\circ([0,1]^{k_i})$, since centring is either done explicitly below if $k_i=1$, or else is not necessary since the centred subgraphs provide the centring). For $1\leq i\leq d$, define 
\ben{\label{26}
W_{i} = 
  \begin{cases}
    {n}^{-1/2}  \displaystyle   \sum_{a=1}^n \phi_i(a/n)\bclr{\psi_i( U_{a})-\IE\psi_i( U_{a})} 
    &\text{if $k_i=1$,}\\[4ex]
    \displaystyle{\textstyle\binom{n}{ k_i}}^{-1/2} \displaystyle  \sum_{a\in \cI^n_{k_i}} \phi_i(a/n)\,\psi_i(U_a) \prod_{v\overset{F_i}{\sim}w}\clr{Y_{a_va_w}-\kappa(U_{a_v},U_{a_w})}
    &\text{if $k_i\geq 2$,}
  \end{cases}
}
and let $W=(W_{1},\dots,W_{d})$. Then, for $\Sigma = (\sigma_{ij})_{1\leq i,j\leq d}=\Var W$, we have   
\besn{\label{27}
\sigma_{ij} =
  \begin{cases} 
    \displaystyle n^{-1}\sum_{a=1}^n \phi_i(a/n)\phi_j(a/n) \Cov\bclr{\psi_i( U_{a}),\psi_j( U_{a})}
      &\text{if $k_i=k_j=1$,}\\[5ex]
     {\textstyle\binom{n}{ k_i}}^{-1}\displaystyle \sum_{a\in \cI^n_{k_i}}\phi_i(a/n)\phi_j(a/n)\IE\bbbclc{\psi_i(U_a) \psi_j(U_a)\prod_{v\overset{F_i}{\sim}w} \Var\bclr{Y_{a_{ij}}\given U}}\kern-7em \\[-1ex]
      &\text{if $F_i =  F_j$,}\\[2ex]
0& \text{otherwise}
\end{cases}
}
(we emphasise that in \eq{27}, the condition `$F_i =  F_j$' really means equality including vertex labels, not just that $F_i$ and $F_j$ are isomorphic).
Before stating our main result, we need some more notation. For a multi-index $\alpha = (\alpha_1, \dots, \alpha_d)$ of non-negative integers and $z\in\IR^d$, let
\be{
  \abs{\alpha}=\alpha_1 + \cdots + \alpha_d,
  \qquad
  \alpha! = \alpha_1!\cdots\alpha_d!,
  \qquad
  z^\alpha = z_1^{\alpha_1}\cdots z_d^{\alpha_d}
}
and 
\be{
  \partial^{\alpha} g(x) =\frac{ \partial^{\abs{\alpha} } g(x)}{\partial x_1^{\alpha_1} \cdots \partial x_d^{\alpha_d} } .
}
For any multi-index $\alpha\in\IN^d$, let 
\be{
  \abs{h}_\alpha = \sup_{x\in\IR^d}\abs{\partial^\alpha h(x)}.
}
Moreover, for two $d$-dimensional random vectors $X$ and $Y$, and with $\cK$ the class of convex sets in $\IR^d$, define the convex set distance
\be{
  d_c\bclr{\law(X),\law(Y)} =  \sup_{A\in \cK} \abs{\IP[X\in A] - \IP[Y\in A] }.
}

\begin{theorem} \label{thm1}
Let $W$ be defined as in \eqref{26}, and let $Z=( Z_1,\dots,  Z_d)$ be a centred Gaussian random vector with covariance matrix $\Sigma$ as given by \eqref{27}, and assume the $Y_{ij}$, $\phi_i$ and $\psi_i$ are all bounded. Let $p$ be an odd integer such that $p\geq  \max \{k_1,\dots, k_d\}$. Then, for any $(p+2)$-times partially differentiable function $h:\IR^d\to\IR$
\ben{
  \abs{\IE h(W)-\IE h(Z)} \leq \frac{C\sup_{\alpha:\abs{\alpha}\leq p+2}\abs{h}_\alpha}{ n^{1/2}}
\label{28}
}
for some constant $C$ that is independent of $n$. Moreover, 
\ben{\label{29}
  d_c\bclr{\law(W_n),\law(Z)} \leq Cn^{-\frac{1}{2(p+2)}}
}
again for some constant $C$ that is independent of $n$. 
\end{theorem}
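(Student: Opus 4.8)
The plan is to prove the smooth bound \eqref{28} by Stein's method and then derive \eqref{29} from it by smoothing. For the Gaussian target $\N(0,\Sigma)$ the Stein operator is $\cA f(w)=\sum_{i,j}\sigma_{ij}\partial_i\partial_j f(w)-\sum_i w_i\partial_i f(w)$, and taking $f$ to be the solution of $\cA f=h-\IE h(Z)$ reduces the problem to bounding $\IE\cA f(W)$; standard regularity estimates for the multivariate Stein equation bound all partial derivatives of $f$ up to order $p+2$ by $\sup_{|\alpha|\le p+2}|h|_\alpha$, which is exactly the factor appearing in \eqref{28}. Writing $W_i=\sum_{a}\xi_{i,a}$ with the normalisation of \eqref{26} absorbed into $\xi_{i,a}$, I would expand the drift term as $\IE[\sum_i W_i\partial_i f(W)]=\sum_i\sum_a\IE[\xi_{i,a}\partial_i f(W)]$ and introduce, for each tuple $a$, the leave-out vector $W^{(a)}$ obtained by deleting from every $W_j$ all summands indexed by tuples sharing a vertex with $a$. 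Since every edge and label internal to $a$ is then absent from $W^{(a)}$, the factor $\xi_{i,a}$ is independent of $W^{(a)}$, and I would Taylor-expand $\partial_i f(W)$ about $W^{(a)}$.

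The mechanism driving the whole argument is the orthogonality built into the centred subgraph indicators: conditionally on $U$ the edge variables are independent and centred, so a product $\prod_e(Y_e-\kappa(U,U))$ has vanishing conditional mean unless every edge occurs with even multiplicity. Because each $F_i$ is connected and has no isolated vertices, this parity constraint forces $\IE[\xi_{i,a}\xi_{j,b}]=0$ unless $F_i=F_j$ and $a=b$, which recovers the covariance structure of \eqref{27}. In the Stein expansion the zeroth-order term vanishes (mean zero and independence), and the diagonal $b=a$ part of the first-order term --- which is genuinely independent of $W^{(a)}$ --- reproduces exactly $\sum_{i,j}\sigma_{ij}\partial_i\partial_j f$ and cancels the second-derivative part of $\cA f$. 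All remaining contributions, including the off-diagonal overlaps in which $a$ and $b$ share some but not all vertices, survive in expectation only through the same even-multiplicity bookkeeping, and the core of the proof is to show that, once the diagonal has been extracted, everything else is $\bigo(n^{-1/2})$. The components with $k_i=1$ are sums of independent functions of the labels and are handled by the classical case of the same expansion; by \eqref{27} they are uncorrelated with the $k_i\ge2$ components, so the two regimes decouple at the level of the covariance.

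The principal difficulty is that the increments are large: $W_j-W_j^{(a)}$ is a sum of order $n^{k_j-1}$ terms each of size $n^{-k_j/2}$, hence of order $n^{k_j/2-1}$, so for $k_j\ge3$ a crude second-order Taylor remainder diverges and cannot be bounded directly. The resolution is to push the expansion to order $p$ and to exploit the parity constraint at every order, so that the overwhelming majority of the mixed moments $\IE[\xi_{i,a}(W-W^{(a)})^\beta\,\partial^\beta\partial_i f(W^{(a)})]$ vanish because some edge appears an odd number of times; the surviving \emph{fully paired} configurations must then be enumerated and shown to be either Gaussian-reproducing or of size $\bigo(n^{-1/2})$, using boundedness of $Y$, $\phi_i$ and $\psi_i$ together with the variance estimates of Lemma~\ref{lem1}. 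The hypothesis $p\ge\max_i k_i$ guarantees that enough increment factors are available to pair all edges of the heaviest subgraph, while taking $p$ odd ensures that the leading uncancelled remainder carries an odd number of independent centred factors and so has vanishing conditional mean, pushing it below $n^{-1/2}$; this is precisely why $p+2$ derivatives of $h$ are required. This combinatorial accounting --- classifying vertex- and edge-overlap patterns by order and verifying uniform-in-$n$ bounds for each --- is the step I expect to be the main obstacle.

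Finally, the convex-set bound \eqref{29} follows from the smooth bound \eqref{28} by a standard smoothing argument. For a convex set $A$ and a scale $\delta>0$ I would replace $\mathbf{1}_A$ by its convolution $h_\delta$ with a Gaussian kernel of bandwidth $\delta$, whose derivatives up to order $p+2$ are bounded by a negative power of $\delta$; applying \eqref{28} controls $|\IE h_\delta(W)-\IE h_\delta(Z)|$, while the smoothing bias is controlled by the Gaussian anti-concentration estimate $\IP[Z\in(\partial A)^\delta]\lesssim\delta$, valid uniformly over convex $A$, together with a short bootstrap transferring this bound from $Z$ to $W$. Optimising over $\delta$ then yields the rate $n^{-1/(2(p+2))}$ of \eqref{29}; as the authors note, this rate is unlikely to be optimal, the polynomial loss arising from the crude dependence of the smooth bound on the order of differentiability.
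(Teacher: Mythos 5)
Your proposal follows essentially the same route as the paper: your leave-out vector $W^{(a)}$ and the term-by-term Taylor expansion of $\IE\{W_i\partial_i f(W)\}$ is precisely the Stein coupling the paper constructs (Lemma~\ref{lem3}, with a randomised index pair $(I,A)$ in place of your explicit sum over $a$, and neighbourhoods defined by sharing at least $2\wedge k_i$ vertices rather than one), your even-multiplicity constraint is the paper's Lemma~\ref{lem4}, the enumeration of surviving overlap configurations is carried out in Lemmas~\ref{lem5}--\ref{lem8}, and the convex-set bound is obtained exactly as you suggest by smoothing indicators of convex sets and balancing against Gaussian anti-concentration (the paper uses the iterated box-kernel smoothing of Gan et al.\ and Bentkus's lemma rather than a Gaussian kernel). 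One small correction of mechanism: the parity of $p$ is not used to make the order-$(p+1)$ remainder have ``vanishing conditional mean'' --- being an absolute moment, $\IE\abs{G_i D^\alpha}$ cannot vanish --- but simply so that $\IE\abs{D_j}^{p+1}=\IE D_j^{p+1}$ is controlled by the signed even-moment bounds of Lemma~\ref{lem8}, while $p\geq\max_i k_i$ is what compensates the factor $\norm{G_i}_\infty=\bigo(n^{k_i/2})$ in that same remainder term.
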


\begin{remark} Consider again the example of the $2\times2$ graphon and $t_\triangle^{\inj}(G_n)$ from Section~\ref{sec1-1}. For example, letting $W_1$ equal $W$ from \eq{1e}, we can write 
\be{
  W_1 = n^{-1/2}\sum_{a=1}^n \phi_1(a/n)\bclr{\psi_1( U_{a})-\IE\psi_1( U_{a})},
  \qquad \phi_1\equiv 1,\, \psi_1(u)=u;
}
letting $W_2$ equal $V_{\edge,1}$ from \eq{1e}, we can write
\be{
  W_2 = \sbinom{n}{2}^{-1/2}  \sum_{a\in \cI^n_{2}} \phi_2(a/n)\,\psi_2(U_a) \prod_{v\overset{\kern0.0em\edge}{\sim}w}\clr{Y_{a_va_w}-\kappa(U_{a_v},U_{a_w})},
  \qquad \phi_2\equiv1,\,\psi_2\equiv 1;
}
letting $W_3$ equal $V_{\edge,4}$ from \eq{1b}, we can write
\bm{
  W_3 = \sbinom{n}{2}^{-1/2}  \sum_{a\in \cI^n_{2}} \phi_3(a/n)\,\psi_3(U_a) \prod_{v\overset{\kern0.0em\edge}{\sim}w}\clr{Y_{a_va_w}-\kappa(U_{a_v},U_{a_w})},
  \\[-1ex] \phi_3\equiv1,\, \psi_3(u_1,u_2) = u_1u_2;
}
finally, letting $W_4$ equal $V_{\twostar,1}$ from \eq{1b}, we can write
\bm{
  W_4 = \sbinom{n}{3}^{-1/2}  \sum_{a\in \cI^n_{3}} \phi_4(a/n)\,\psi_4(U_a) \prod_{v\overset{\kern-0.0em\twostar}{\sim}w}\clr{Y_{a_va_w}-\kappa(U_{a_v},U_{a_w})},
  \\[-1ex] \phi_4\equiv1,\, \psi_4(u_1,u_2,u_3) = \kappa(u_1,u_3).
}
The other quantities in \eq{1b} can be represented in a similar manner. Applying Theorem~\ref{thm1}, we obtain that the quantities in \eq{1e} and \eq{1b} are jointly close in distribution to independent Gaussian random variables.  
\end{remark}

\begin{remark}\label{rem1}Consider the case of $\IG(n,\kappa)$; that is, the $U_i$ are independent and distributed uniformly on $[0,1]$. Assume $\phi_i$ and $\psi_i$ are continuous almost everywhere; then, with $\bar\psi_i(u) = \psi_i(u)-\IE\psi_i(U_1)$ if $k_i=1$,
\besn{\label{30}
\lim_{n\toinf}\sigma_{ij}= 
  \begin{cases} 
    \displaystyle \int_{[0,1]} \phi_i(t)\phi_j(t)\,dt\,\int_{[0,1]}\-\psi_i(u)\-\psi_j(u)du
      &\text{if $k_i=k_j=1$,}\\[5ex]
     \displaystyle\int_{\cD_k} \phi_i(t)\phi_j(t)\,dt\,\int_{[0,1]^k}{\psi_i(u) \psi_j(u)\prod_{v\overset{F_i}{\sim}w} \kappa(u_v,u_w) \clr{1-\kappa(u_v,u_w)}}du\kern-7em \\[-2ex]
      &\text{if $F_i =  F_j$,}\\[2ex]
0& \text{otherwise.}
\end{cases}
}
Therefore, the corresponding Gaussian stochastic measures $Z_F$ are determined by the measure spaces 
\be{
  \bbbclr{\cD_k\times[0,1]^k,dt\times \prod_{v\overset{F}{\sim}w} \kappa(u_v,u_w) \clr{1-\kappa(u_v,u_w)}\, du}, \qquad F\in \cF,\, k=\abs{F},
}
and these measures are independent of each other.
\end{remark}

\begin{remark}\label{rem2} Now consider $\IG_{\mathrm{lat}}(n,\kappa)$. The case $k_i=1$ is not interesting since $\sigma_{ij}=0$ for all $j$. Moreover, we can assume without loss of generality that $\psi_i\equiv 1$. Assume that $\phi$ and $\kappa$ are continuous almost everywhere. Then, if $F_i=F_j$, we have
\besn{\label{31}
\lim_{n\toinf}\sigma_{ij} =
  \int_{\cD_k} \phi_i(t)\phi_j(t)\prod_{v\overset{F}{\sim}w} \kappa(t_v,t_w) \clr{1-\kappa(t_v,t_w)}\,dt
}
and $\sigma_{ij}=0$ otherwise. Therefore, the corresponding Gaussian stochastic measure $Z_F$ is determined by the measure space
\be{
  \bbbclr{\cD_k,\prod_{v\overset{F}{\sim}w} \kappa(t_v,t_w) \clr{1-\kappa(t_v,t_w)} \, dt},
  \qquad F\in \cF,\,k=\abs{F},
}
and these measures are independent of each other.
\end{remark}

\subsection{Connection to fourth moment theorem} 

One might wonder why the limits of the centred subgraph count statistics for connected $F$ turn out to be Gaussian. We believe that this is connected to the \emph{Fourth Moment Theorem}, first proved by \cite{Nualart2005}; see \cite{Nourdin2012} for a comprehensive discussion and proofs based on Stein's method. 

The theorem can be formulated as follows. Let $(Z_h)_{h\in H}$ be a Gaussian Hilbert space defined on some probability space $\Omega$, and let $\cF$ be the sigma-algebra generated by that space. Let $F_n\in L_2(\Omega,\cF)$ with $\IE F_n=0$ and $\Var F_n = 1$ for all $n\geq 1$, and assume the $F_n$ are elements of a fixed Wiener chaos. Then, $F_n$ converges to a standard Gaussian distribution if and only if $\IE F^4_n$ converges to $3$.

Consider a Gaussian stochastic measure $Z_2$ on $\cD_2$, where $\cD_k$ is as before --- we can think of $Z_2$ as the Gaussian approximation of the centred and scaled ``edge-field'' $\^Z_n$ in \eq{4}. For $1\leq i<j \leq n$, let 
\be{
  X_{ij} = \int_{\cls{\frac{i-1}{n},\frac{i}{n}}\times\cls{\frac{j-1}{n},\frac{j}{n}}} Z_2(dx,dy).
}
It is easy to see that the $X_{ij}$ are independent and that  
\be{
  X_{ij}\sim \N(0,n^{-2}).
}
We can think of $X_{ij}$ as a Gaussian version of the centred and scaled edge indicator between vertices $i$ and $j$, where $i<j$.   

Let $\phi\in L_2(\cD_3)$ and assume $\phi$ is continuous almost everywhere, and define $\-\phi_n\in L_2(\cD_2\times\cD_2 )$ as 
\be{
  \-\phi_n(x,y,u,v)=\phi\bbclr{\frac{\ceil{nx}}{n},\frac{\ceil{ny}}{n},\frac{\ceil{nv}}{n}} \I\bcls{\ceil{ny}/{n}=\ceil{nu}/{n}}.
} 
Note that 
\besn{\label{100}
 &\int_{\cD_2}\int_{\cD_2} \-\phi_n(x,y,u,v)\, Z_2(dx,dy)Z_2(du,dv)\\
 &\kern6em=\sum_{i<j<k} \phi\bbclr{\frac{i}{n},\frac{j}{n},\frac{k}{n}}  X_{ij} X_{jk}
 + \text{small boundary term}
}
lives in the second Wiener chaos and has variance
\be{
  \frac{1}{n^4}\sum_{i<j<k}
  \phi\bbclr{\frac{i}{n},\frac{j}{n},\frac{k}{n}}^2 
  \approx \frac{1}{n}\int_{\cD_3} \phi(x)^2dx.
}
The sum on the right hand side of \eq{100} is just the two-star count of the $X_{ij}$, and so centred subgraph counts of random graphs are in essence multiple stochastic integrals of the centred edge indicators. 

Now, assume that $\int_{\cD_3} \phi(x)^2dx=1$ and consider
\be{
 F_n = \sum_{i<j<k} \sqrt{n}\phi\bbclr{\frac{i}{n},\frac{j}{n},\frac{k}{n}}  X_{ij} X_{jk};
}
we have $\Var F_n = 1$. It is not difficult to see that, if $\phi$ is continuous almost everywhere,
\bes{
  \IE F_n^4 & = \sum_{i<j<k} n^2 \phi\bbclr{\frac{i}{n},\frac{j}{n},\frac{k}{n}}^4\times \frac{3}{n^8} \\
  &\qquad+ 3\sum_{i<j<k}\sum_{\substack{u<v<w\\(u,v,w)\neq(i,j,k)}}
  n^2 \phi\bbclr{\frac{i}{n},\frac{j}{n},\frac{k}{n}}^2
  \phi\bbclr{\frac{u}{n},\frac{v}{n},\frac{w}{n}}^2
  \times \frac{1}{n^8} \\
 & = 3\bbbclr{\int_{\cD_3}\phi(x)^2dx}^2 + \lito(1)
 = 3 + \lito(1),
}
and so, by the fourth moment theorem, $F_n$ converges to a standard normal. The corresponding multivariate convergence can be made with similar arguments for any finite collection of such $\phi$. Formally, we can therefore identify a Gaussian Hilbert Space on $L_2(\cD_3)$ with 
\ben{\label{32}
  \int_{\cD_3} \phi(x_1,x_2,x_3)\,Z_2(dx_1,dx_2) Z_2(dx_2,dx_3),
  \qquad \phi\in L_2(\cD_3).
}
This argument is general, and in the same manner, we can think of $Z_2$ giving rise to a Gaussian Hilbert space on $L_2(\cD_k)$ for every connected graph $F$ on $k$ vertices and identify it with the integral
\ben{\label{101}
  \int_{\cD_k} \phi(x)\, \prod_{i\stackrel{F}{\sim} j}  Z_2(dx_i,dx_j),
  \qquad \phi\in L_2(\cD_k).
} 
However, it is important to keep in mind that the convergence of $F_n$ is only distributional, so it is not clear whether the Gaussian Hilbert spaces \eq{101} can be coupled with the underlying space $Z_2$ in a non-trivial and meaningful manner.

\section{Abstract approximation theorem} 

The following abstract multivariate normal approximation theorem is based on Stein's method and can yield informative bounds even in the case of vectors of sums of uncorrelated, but not necessarily independent random variables. Let $e_i$ be the $i$-th unit-vector in $\IN^d$. A triple of $d$-dimensional vectors $(W,W',G)$ is called \emph{Stein coupling} if
\ben{\label{33}
 \IE\clc{ G^t g(W') - G^t g(W)}=\IE \clc{W^t g(W)}.
}
for all $g:\IR^d\to \IR^d$ for which the expectations exists; see \cite{Chen2010b} and \cite{Fang2012a}.
 
\begin{theorem} \label{thm2} Let $(W,W',G)$ be a Stein coupling, and let $\Sigma=(\sigma_{ij})_{1\leq i,j\leq d}=\Cov(W)$; set $D=W'-W$. 
Then, for any  $(p+2)$-times  differentiable function $h:\IR^d\to\IR$ and with $Z$ being a centred Gaussian vector with covariance matrix $\Sigma$,
\besn{\label{34}
& \abs{ \IE h(W) - \IE h(Z)} \\ 
&\qquad\leq  \sum_{i=1}^d \bbbbclr{\, \sum_{\alpha: 1\leq\abs{\alpha}\leq p} \frac{\abs{h}_{\alpha+e_i}}{(\abs{\alpha}+1)\alpha!} \sqrt{\Var\IE\clc{G_iD^{\alpha}\given W}}
\\
&\qquad\quad+  \sum_{\alpha: 2\leq \abs{\alpha}\leq p} \frac{\abs{h}_{\alpha+e_i}}{(\abs{\alpha}+1)\alpha!} \abs{\IE \clc{G_iD^{\alpha}}}
 +  \sum_{\alpha: \abs{\alpha}=p+1} \frac{\abs{h}_{\alpha+e_i}}{(p+2)\alpha!} \IE\abs{G_iD^\alpha}}.
}
\end{theorem}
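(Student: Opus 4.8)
The plan is to combine the generator (Ornstein--Uhlenbeck) form of multivariate Stein's method with the coupling identity \eqref{33}. First I would introduce the generator $\cA f(w) = \sum_{i,j=1}^d \sigma_{ij}\partial_i\partial_j f(w) - \sum_{i=1}^d w_i\partial_i f(w)$, whose invariant law is exactly $\N(0,\Sigma)$, and solve the Stein equation $\cA f = h - \IE h(Z)$ through the semigroup representation $f(w) = -\int_0^\infty \clr{T_t h(w) - \IE h(Z)}\,dt$, where $T_t h(w) = \IE h\clr{e^{-t}w + \sqrt{1-e^{-2t}}\,Z}$. Differentiating under the integral sign and noting that every partial derivative in $w$ brings down a factor $e^{-t}$ yields the regularity estimate $\abs{f}_\beta \leq \abs{h}_\beta/\abs{\beta}$ for all multi-indices $\beta$ with $\abs{\beta}\geq 1$. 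This is the source of both the requirement that $h$ be $(p+2)$-times differentiable and the numerical factors $1/(\abs{\alpha}+1)$ and $1/(p+2)$ appearing in \eqref{34}.

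Next I would read off two moment identities from \eqref{33}. Taking $g$ constant gives $\IE W=0$, hence $\Sigma=\IE\clc{WW^t}$; taking $g(w)=w_j e_i$ gives the key identity $\sigma_{ij}=\IE\clc{G_iD_j}$, because then $W^t g(W)=W_iW_j$ while $G^t g(W')-G^t g(W)=G_iD_j$. With the solution $f$ in hand and $g=\nabla f$ in \eqref{33}, the starting point of the estimate is
\bm{
  \IE h(W)-\IE h(Z) = \IE\cA f(W)\\
  = \sum_{i,j}\sigma_{ij}\IE\clc{\partial_i\partial_j f(W)} - \IE\clc{G^t\clr{\nabla f(W')-\nabla f(W)}}.
}

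I would then Taylor-expand each coordinate $\partial_i f(W')=\partial_i f(W+D)$ about $W$ to order $p$ with integral remainder, turning the last expectation above into $\sum_i\sum_{1\leq\abs{\alpha}\leq p}\frac{1}{\alpha!}\IE\clc{G_iD^\alpha\partial^{\alpha+e_i}f(W)}$ plus a remainder collecting the terms with $\abs{\alpha}=p+1$. For the leading order $\abs{\alpha}=1$, I would split $\IE\clc{G_iD_j\partial_i\partial_j f(W)}$ into $\Cov\clr{G_iD_j,\partial_i\partial_j f(W)}$ plus $\IE\clc{G_iD_j}\IE\clc{\partial_i\partial_j f(W)}$; by $\sigma_{ij}=\IE\clc{G_iD_j}$ the latter cancels exactly against the $\sum_{i,j}\sigma_{ij}\IE\clc{\partial_i\partial_j f(W)}$ term, which is why the mean-product sum in \eqref{34} only starts at $\abs{\alpha}=2$. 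For $2\leq\abs{\alpha}\leq p$ the same split produces a covariance and a mean-product $\IE\clc{G_iD^\alpha}\IE\clc{\partial^{\alpha+e_i}f(W)}$, which I would bound by $\abs{\IE\clc{G_iD^\alpha}}\,\abs{f}_{\alpha+e_i}$; the order-$(p+1)$ remainder I would bound crudely by $\IE\abs{G_iD^\alpha}\,\abs{f}_{\alpha+e_i}$. Inserting $\abs{f}_{\alpha+e_i}\leq \abs{h}_{\alpha+e_i}/(\abs{\alpha}+1)$ then recovers the three sums of \eqref{34}.

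The main obstacle, and the step that makes the theorem effective for sums of uncorrelated but genuinely dependent summands, is bounding the covariances $\Cov\clr{G_iD^\alpha,\partial^{\alpha+e_i}f(W)}$ for $1\leq\abs{\alpha}\leq p$. The point is that $\partial^{\alpha+e_i}f(W)$ is $\sigma(W)$-measurable, so conditioning on $W$ gives $\Cov\clr{G_iD^\alpha,\partial^{\alpha+e_i}f(W)}=\Cov\clr{\IE\clc{G_iD^\alpha\given W},\partial^{\alpha+e_i}f(W)}$, and Cauchy--Schwarz together with the sup-norm bound $\abs{f}_{\alpha+e_i}$ yields $\abs{\Cov\clr{G_iD^\alpha,\partial^{\alpha+e_i}f(W)}}\leq \sqrt{\Var\IE\clc{G_iD^\alpha\given W}}\,\abs{f}_{\alpha+e_i}$, which is exactly the first sum in \eqref{34}. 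The remaining points --- justifying differentiation under the integral and the admissibility of $g=\nabla f$ in \eqref{33} --- are routine given the assumed boundedness of $h$ and its derivatives.
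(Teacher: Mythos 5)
Your proposal is correct and follows essentially the same route as the paper's proof: solve the multivariate Stein equation with the derivative bound $\abs{g}_\alpha\leq\abs{h}_\alpha/\abs{\alpha}$ (which the paper simply cites from Meckes (2009) rather than re-deriving via the Ornstein--Uhlenbeck semigroup as you do), Taylor-expand $\nabla g(W')$ about $W$ to order $p$ with integral remainder, use the Stein-coupling identity and $\sigma_{ij}=\IE\clc{G_iD_j}$ to obtain the first-order cancellation, and bound the centred terms by conditioning on $W$ and Cauchy--Schwarz. The only cosmetic differences are that you make the semigroup derivation of the regularity estimate and the derivation of $\sigma_{ij}=\IE\clc{G_iD_j}$ explicit, both of which the paper takes as known.
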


\begin{remark}\label{rem3}
Note that, by Young's inequality, we can upper bound the last term in~\eqref{34} as
\ben{\label{35}
  \IE\abs{G_iD^\alpha} \leq \norm{G_i}_\infty\sum_{j=1}^d\frac{\alpha_j}{p+1} \IE\babs{D_j^{p+1}}.
} 
\end{remark}
\begin{proof}[Proof of Theorem \ref{thm2}]
Let $g:\IR^d \to \IR$ be a solution to the Stein's equation 
\ben{\label{36}
 \sum_{i,j=1}^d \sigma_{ij}\partial_{ij} g(z) - \sum_{i=1}^dz_i\partial_ig(z) =  h(z) - \IE h(Z),\qquad z\in\IR^d.
}
From \cite[Eq.~(10)]{Meckes2009}, it is immediate that
\ben{\label{37}
  \abs{g}_\alpha\leq\frac{1}{\abs{\alpha}} \abs{h}_\alpha
}
and it is therefore enough to bound
\be{
  \IE \bbbclc{ \sum_{i,j=1}^d \sigma_{ij}\partial_{ij} g(W) - \sum_{i=1}^dW_i\partial_ig(W)}
}
in order to bound the left hand side of \eqref{34}. By Taylor's theorem for multivariate functions, 
\ba{
f(w')- f (w) & = \sum_{l=1}^p  \sum_{\alpha: \abs{\alpha}=l} \frac{\partial^\alpha f(w)}{\alpha!} (w'-w)^{\alpha} + R^{(p+1)}(w',w)
}
and 
\ba{
R^{(p+1)} (w',w) & = \sum_{\alpha: \abs{\alpha}=p+1} \frac{p+1}{\alpha!} (w'-w)^\alpha \int_0^1  (1-s)^{p} \partial^{\alpha} f (w+s(w'-w)) ds.
}
Now, using \eqref{33}, we have 
\bes{
& \IE \bbbclc{ \sum_{i=1}^dW_i\partial_ig(W)} \\
&\qquad = \IE \bbbclc{  \sum_{i=1}^dG_i\bclr{\partial_ig(W')-\partial_ig(W)}}\\
&\qquad = \IE \bbbclc{  \sum_{i=1}^dG_i\sum_{l=1}^p  \sum_{\alpha: \abs{\alpha}=l} \frac{\partial^{\alpha+e_i} g(W)}{\alpha!} D^{\alpha}}\\
&\qquad\quad+\IE \bbbclc{  \sum_{i=1}^dG_i\sum_{\alpha: \abs{\alpha}=p+1} \frac{p+1}{\alpha!} D^\alpha \int_0^1  (1-s)^{p} \partial^{\alpha+e_i} g (W+sD) ds} \eqqcolon r_1+r_2.
}
Now, 
\bes{
  r_1 & = \sum_{l=1}^p \IE \bbbclc{ \sum_{i=1}^d  \sum_{\alpha: \abs{\alpha}=l} \frac{\partial^{\alpha+e_i} g(W)}{\alpha!} G_iD^{\alpha}}\\
  & = \sum_{l=1}^p \IE \bbbclc{ \sum_{i=1}^d  \sum_{\alpha: \abs{\alpha}=l} \frac{\partial^{\alpha+e_i} g(W)}{\alpha!} \bclr{\IE\clc{G_iD^{\alpha}\given W}-\IE \clc{G_iD^{\alpha}}}}\\
  &\quad+\sum_{l=1}^p \IE \bbbclc{ \sum_{i=1}^d  \sum_{\alpha: \abs{\alpha}=l} \frac{\partial^{\alpha+e_i} g(W)}{\alpha!} \IE \clc{G_iD^{\alpha}}} \eqqcolon r_{1,1}+r_{1,2}.
}
First, 
\be{
  \abs{r_{1,1}}\leq \sum_{l=1}^p \sum_{i=1}^d  \sum_{\alpha: \abs{\alpha}=l} \frac{\abs{g}_{\alpha+e_i}}{\alpha!} \IE\babs{\IE\clc{G_iD^{\alpha}\given W}-\IE \clc{G_iD^{\alpha}}}.
}
Next, recalling that $\sigma_{ij}=\IE\clc{G_i D_j}$,
\be{
  \bbbabs{r_{1,2}-\IE\sum_{i,j=1}^d\sigma_{ij}\partial_{ij}g(W)}
  \leq
  \sum_{l=2}^p \sum_{i=1}^d  \sum_{\alpha: \abs{\alpha}=l} \frac{\abs{g}_{\alpha+e_i}}{\alpha!} \abs{\IE \clc{G_iD^{\alpha}}},
}
A bound on $r_2$ can be obtained in a similar manner, and so 
\besn{\label{38}
& \bbbabs{ \IE \bbbclc{ \sum_{i,j=1}^d \sigma_{ij}\partial_{ij} g(W) - \sum_{i=1}^dW_i\partial_ig(W)}} \\ 
&\qquad\leq  \sum_{i=1}^d \bbbbclr{\, \sum_{\alpha: 1\leq\abs{\alpha}\leq p} \frac{\abs{g}_{\alpha+e_i}}{\alpha!} \sqrt{\Var\IE\clc{G_iD^{\alpha}\given W}}
\,{}+  \sum_{\alpha: 2\leq \abs{\alpha}\leq p} \frac{\abs{g}_{\alpha+e_i}}{\alpha!} \abs{\IE \clc{G_iD^{\alpha}}}\\
&\qquad\kern4em
 +  \sum_{\alpha: \abs{\alpha}=p+1} \frac{\abs{g}_{\alpha+e_i}}{\alpha!} \IE\abs{G_iD^\alpha}}.
}
Applying \eqref{37}, the claim follows.
\end{proof}

\section{Proof of Theorem~\ref{thm1}}

Fix $d\geq 1$, and for each $1\leq i \leq d$, let $F_i$ be a connected graph on the vertex set $[k_i]$. Let $U=(U_v)_{1\leq v\leq n}$ be independent random variables, let $\kappa$ be a graphon, and let $Y=(Y_{vw})_{1\leq v<w\leq n}$ be random variables that are independent conditionally on $U$ and such that $\IE\clc{Y_{vw}\given U_v,U_w}=\kappa(U_v,U_w)$. For $1\leq i\leq d$ and $a\in\cI^n_{k_i}$, let
\ben{\label{39}
  T_{i,a} = \prod_{v\stackrel{F_i}{\sim} w}\bclr{Y_{a_v a_w}-\kappa(U_{a_v},U_{a_w}) }
}
if $k_i\geq 2$, and for convenience, set $T_{i,a} = 1$ if $k_i=1$.
For each $1\leq i\leq d$, let $\psi_i:[0,1]^{k_i}\to\IR$ be a bounded function, and for $a\in\cI^n_{k_i}$, let 
\be{
  \Phi_{i,a} = 
  \begin{cases}
    \psi_i(U_{a_1})-\IE  \psi_i(U_{a_1}) & \text{if $k_i=1 $,}\\
    \psi_i(U_a) & \text{if $k_i\geq 2$.}
  \end{cases}
} 
Now, for $a\in \cI^n_{k_i}$, let

\be{
  X_{i,a} = {\textstyle\binom{n}{ k_i}^{-1/2}}\phi_i(a/n)\Phi_{i,a} T_{i,a}.
} 
Recalling the definition of $W=(W_1,\dots,W_d)$ from \eqref{26}, we have 
\be{ 
W_i = \sum _{a\in \cI^n_{k_i}  } X_{i,a} .
}

\subsection{Stein Coupling}

Let $1\leq i\leq d$ and $a\in\cI^n_{k_i}$. For $1\leq j\leq d$, let
\be{
  N^{i,a}_j = 
    \{b\in\cI^n_{k_j}:\abs{a\cap b}\geq 2\wedge k_i\},
}
where in the expression $a\cap b$, the ordered tuples $a$ and $b$ are interpreted as unordered sets. Note that if $k_i\geq 2$ and $k_j=1$ then $N^{i,a}_j=\emptyset$. Let
\be{
  W^{i,a}_j = W_j - \sum_{b\in N^{i,a}_j} X_{i,b}.
}
Let $I$ be uniformly distributed on $[d]$ and independent of all else, and given $I$, let $A$ be uniformly distributed on $\cI^n_{k_I}$. Let
\be{
  W' = W^{I,A} = \bclr{W^{I,A}_1,\dots,W^{I,A}_d},\qquad
  G = -d \binom{n}{ k_I} X_{I,A}\,e_I,
}
where $e_i$ is the $i$-th unit vector in $\IR^d$.

\begin{lemma}\label{lem3}
 $(W, W',G)$ is a $d$-dimensional Stein coupling.
\end{lemma}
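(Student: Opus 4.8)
The plan is to verify the defining identity \eqref{33} directly by evaluating $\IE\clc{G^t g(W)}$ and $\IE\clc{G^t g(W')}$ separately and comparing their difference with $\IE\clc{W^t g(W)}$. The structural simplification that makes this tractable is that $G = -d\binom{n}{k_I}X_{I,A}e_I$ is supported on the single random coordinate $I$, so both inner products collapse to one term, $G^t g(\cdot) = -d\binom{n}{k_I}X_{I,A}\,g_I(\cdot)$, and it is the randomisation over $(I,A)$ that regenerates the full sums defining $W$.

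First I would compute $\IE\clc{G^t g(W)}$. Conditioning on $(I,A)=(i,a)$, which is independent of the data $(U,Y)$, the sampling weights $\IP[I=i]=1/d$ and $\IP[A=a\given I=i]=1/\binom{n}{k_i}$ cancel exactly against the factor $-d\binom{n}{k_i}$ built into $G$, giving
\be{
  \IE\clc{G^t g(W)} = -\sum_{i=1}^d\sum_{a\in\cI^n_{k_i}}\IE\clc{X_{i,a}\,g_i(W)} = -\sum_{i=1}^d\IE\clc{W_i\,g_i(W)} = -\IE\clc{W^t g(W)}.
}
Hence \eqref{33} is equivalent to the single claim $\IE\clc{G^t g(W')}=0$, and by the identical cancellation this reduces to proving that $\IE\clc{X_{i,a}\,g_i(W^{i,a})}=0$ for each fixed $i$ and each $a\in\cI^n_{k_i}$.

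The heart of the proof is this last vanishing, which I would establish by conditioning on a $\sigma$-algebra $\cG_{i,a}$ chosen so that $W^{i,a}$ is $\cG_{i,a}$-measurable while $\IE\clc{X_{i,a}\given\cG_{i,a}}=0$; the correct choice is dictated precisely by the threshold $2\wedge k_i$ in $N^{i,a}_j$. When $k_i\geq2$ I take $\cG_{i,a}$ generated by $U$ together with all edge variables $Y_{vw}$ with $\{v,w\}\not\subseteq a$. For any retained index $b$ (i.e.\ $\abs{a\cap b}\leq1$), no edge of $F_j$ realised on $b$ can have both endpoints in $a$, so every surviving summand $X_{j,b}$ of $W^{i,a}_j$ is $\cG_{i,a}$-measurable; meanwhile $X_{i,a}$ factors as a $\cG_{i,a}$-measurable prefactor times $T_{i,a}=\prod_{v\overset{F_i}{\sim}w}(Y_{a_va_w}-\kappa(U_{a_v},U_{a_w}))$, whose factors remain conditionally independent given $\cG_{i,a}$ with conditional mean $\kappa(U_{a_v},U_{a_w})$ (the internal edges are conditionally independent of the external ones given $U$), so that $\IE\clc{T_{i,a}\given\cG_{i,a}}=0$, the product being nonempty because $F_i$ is connected without isolated vertices. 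When $k_i=1$ the threshold is $2\wedge1=1$, so $N^{i,a}_j$ deletes every $b$ containing the single index $a_1$; then $W^{i,a}$ depends only on $\{U_v:v\neq a_1\}$ and on edges avoiding $a_1$, hence is independent of $U_{a_1}$, while $X_{i,a}$ is a multiple of the centred variable $\psi_i(U_{a_1})-\IE\psi_i(U_{a_1})$ of mean zero. In either regime $\IE\clc{X_{i,a}\,g_i(W^{i,a})}=\IE\bclc{g_i(W^{i,a})\,\IE\clc{X_{i,a}\given\cG_{i,a}}}=0$.

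Combining the two computations gives $\IE\clc{G^t g(W')-G^t g(W)}=0-(-\IE\clc{W^t g(W)})=\IE\clc{W^t g(W)}$, which is exactly \eqref{33}. The main obstacle is the middle step: one must check simultaneously that the deletion rule $N^{i,a}_j$ strips away \emph{enough} summands for $W^{i,a}$ to be measurable with respect to the very conditioning that annihilates $X_{i,a}$, yet does so \emph{uniformly} across $j$ and across the two regimes $k_i=1$ and $k_i\geq2$. This is exactly what the condition $\abs{a\cap b}\geq 2\wedge k_i$ encodes, and confirming that the same rule does the job in both regimes is the only delicate point.
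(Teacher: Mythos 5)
Your proposal is correct and follows essentially the same route as the paper: it verifies $-\IE\clc{G^tg(W)}=\IE\clc{W^tg(W)}$ by cancelling the sampling weights against the factor $d\binom{n}{k_I}$, and shows $\IE\clc{G^tg(W')}=0$ by exactly the paper's case split — for $k_i\geq 2$ conditioning on $U$ and the edges not contained in $a$ so that $\IE\clc{T_{i,a}\given U}=0$ applies, and for $k_i=1$ using that $W^{i,a}$ is independent of $U_{a_1}$ while $\Phi_{i,a}$ is centred. Your explicit $\sigma$-algebra $\cG_{i,a}$ merely formalises what the paper states informally (and you correctly read the paper's $X_{i,b}$ in the definition of $W^{i,a}_j$ as the intended $X_{j,b}$).
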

\begin{proof} Write $g(x)=(g_1(x),\dots,g_d(x))$; averaging over $I$ and $A$, 
\besn{
  \IE\clc{G^t g(W')} & = -\sum_{i=1}^d\sum_{a\in\cI^n_{k_i}}{\textstyle \binom{n}{ k_i}^{-1/2}} \phi_i(a/n)\IE\clc{ \Phi_{i,a} T_{i,a}g_i(W^{i,a}) }.
}

If $k_i=1$, then $W^{i,a}$ does not contain any information about $U_a$, and since $T_{i,a}=1$ and $\IE\Phi_{i,a}=0$, it follows that $\IE\clc{ \Phi_{i,a} T_{i,a}g_i(W^{i,a}) }=0$. If $k_i\geq 2$, then conditionally on $U$, $W^{i,a}$ does not contain any information about $(Y_{vw})_{v,w\in a}$. Since $\IE\clc{T_{i,a}\given U}=0$ it again follows that $\IE\clc{ \Phi_{i,a} T_{i,a} g_i(W^{i,a}) }=0$. Hence $ \IE\clc{G^t g(W')}=0$. It is straightforward to check that $-\IE\clc{G^t g(W) } = \IE\clc{W^t g(W)}$.
\end{proof}

\subsection{Estimates on mixed moments}

Before proving the main theorem, we present some lemmas, which will be used in the proof of Theorem~\ref{thm1}. For a graph $F$ on the vertex set $[k]$ and $b\in\cI^n_{k}$, denote by $F(b)$ the graph on the vertex set $\{b_1,\dots,b_k\}$ where $b_v$ and $b_w$ are connected in $F(b)$ if and only if $v$ and $w$ are connected in $F$. In other words, $F(b)$ is the induced graph when mapping vertex $v$ to vertex $b_v$ for all $v\in[k]$. We assume throughout that, for each $1\leq i\leq d$, $F_i$ is a connected graph on the vertex set $[k_i]$, where $k_i\geq 1$. The reader should keep in mind that the bounds obtained in Lemmas~\ref{lem5}--\ref{lem7} are \emph{worst-case} bounds, and will typically be sharp if all graphs involved are line graphs, but depending on the combinatorics of the $F_i$, the bounds could be much smaller. Phrases like ``there are $\bigo(n^k)$ choices'' have to be understood in the context of the usual Bachmann–Landau notation, which in this case means that the number of choices can be ``of order $n^k$ or of \emph{smaller order}''.

\begin{lemma}[{c.f. \cite[Lemma~5]{Janson1991}}]\label{lem4} Let $m\geq 2$, and for each $1\leq l\leq m$, let $1\leq i_l\leq d$, and let $b_l\in\cI^n_{k_{i_l}}$. Assume 
\ben{\label{40}
  \IE \bbbclc{\prod_{l=1}^m \Phi_{i_l,b_l}\prod_{l=1}^m T_{i_l,b_l}} \neq 0.
}
Then, every vertex and every edge belong to at least two of the subgraphs 
\ben{\label{41}
  F_{i_1}(b_1),\cdots,F_{i_m}(b_m).
} 
Moreover, the subgraphs \eqref{41} either coincide in $m/2$ disjoint pairs ($m$ necessarily even) or there is a vertex that belongs to at least three of them.
\end{lemma}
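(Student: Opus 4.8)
The plan is to evaluate the expectation in \eqref{40} by first conditioning on the vertex labels $U=(U_v)_{v\in[n]}$, exploiting the conditional independence of the $Y_{vw}$, and then taking the outer expectation over $U$ using the independence of the $U_v$. The first step establishes the edge condition. Conditionally on $U$, the product $\prod_l T_{i_l,b_l}$ factorises over the distinct edges $e=\{p,q\}$ of the union $\bigcup_l F_{i_l}(b_l)$, each edge contributing a factor $(Y_e-\kappa(U_p,U_q))^{m_e}$, where $m_e$ is the number of subgraphs in which $e$ appears. Since the $Y_e$ are conditionally independent and $\IE\{Y_e-\kappa(U_p,U_q)\mid U\}=0$, any edge with $m_e=1$ contributes a vanishing conditional factor, forcing the whole conditional expectation, and hence \eqref{40}, to be zero. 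Thus every edge must lie in at least two of the subgraphs in \eqref{41}. After this step, $g(U):=\IE\{\prod_l T_{i_l,b_l}\mid U\}$ is a function only of those $U_v$ with $v$ an endpoint of some edge of the union.

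Next I would establish the vertex condition. Suppose a vertex $x\in[n]$ lies in exactly one subgraph $F_{i_l}(b_l)$. If $k_{i_l}\geq 2$, connectedness of $F_{i_l}$ makes $x$ an endpoint of some edge, which then appears in only this subgraph, contradicting the edge condition. If $k_{i_l}=1$, then $\Phi_{i_l,b_l}=\psi_{i_l}(U_x)-\IE\psi_{i_l}(U_x)$ is centred, and $U_x$ appears in no other factor of the product (neither in $g(U)$, since $x$ is not an edge endpoint, nor in any other $\Phi$); by independence of the $U_v$ one may factor out $\IE\{\psi_{i_l}(U_x)-\IE\psi_{i_l}(U_x)\}=0$. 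Either way \eqref{40} vanishes, so every vertex lies in at least two of the subgraphs, completing the first assertion.

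For the dichotomy, assume no vertex lies in three or more subgraphs; then by the previous paragraph each appearing vertex lies in exactly two. Write $S_l=F_{i_l}(b_l)$, and for fixed $l$ and each $x\in V(S_l)$ let $\sigma_l(x)$ be the unique other subgraph containing $x$. If $k_{i_l}\geq 2$, then for any edge $\{x,y\}$ of the connected graph $S_l$ the edge condition provides a second subgraph containing both $x$ and $y$, which must equal $\sigma_l(x)=\sigma_l(y)$; connectedness then forces $\sigma_l$ to be constant, say equal to $S_{l'}$ (if $k_{i_l}=1$, the same edge condition, applied to an edge incident to $x$ in a putative larger partner, forces that partner also to be the singleton $\{x\}$). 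A symmetric argument gives $V(S_l)=V(S_{l'})$, and since every vertex of $S_l$ lies in exactly $S_l$ and $S_{l'}$, the only subgraph besides $S_l$ that can contain an edge of $S_l$ is $S_{l'}$, whence $E(S_l)=E(S_{l'})$ and $S_l$ and $S_{l'}$ coincide. The map $l\mapsto l'$ is a fixed-point-free involution whose pairs are vertex-disjoint, so $m$ is even and the subgraphs split into $m/2$ disjoint coinciding pairs.

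The main obstacle I expect is the vertex step together with the pairing argument: one must track carefully how a single appearance of a vertex propagates to a single appearance of an incident edge (using connectedness of the $F_i$), and conversely how the ``exactly two'' constraint forces entire connected subgraphs, rather than merely individual vertices or edges, to coincide. The bookkeeping separating the $k_{i_l}=1$ and $k_{i_l}\geq 2$ cases, and ensuring that the centring of the $\Phi$-factors and the centring of the $T$-factors are invoked in the correct order relative to the conditioning on $U$, is where the argument must be handled with care.
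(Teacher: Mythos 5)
Your proof is correct and follows essentially the same route as the paper's: the edge condition via conditional centring and conditional independence of the $Y_{vw}$ given $U$, the singleton-vertex case via the centring of $\Phi_{i_l,b_l}$ together with independence of the $U_v$, and the dichotomy by propagating the ``exactly two'' constraint along edges of a connected subgraph so that paired subgraphs must coincide. Your explicit fixed-point-free involution in the pairing step is just a more detailed rendering of the paper's terser closing argument.
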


\begin{proof} Without loss of generality, assume there is $m'\leq m$ such that $k_{i_l}=1$ for all $l> m'$ (if there are no such indices, set $m'=m$). So, assume \be{\IE \bbbclc{\prod_{l=1}^m \Phi_{i_l,b_l}\IE\bbbclc{\prod_{l=1}^{m'} T_{i_l,b_l}\given U}} \neq 0.}
Suppose there is an edge between $v$ and $w$ in a subgraph that is not in any other subgraph, so that the factor $Y_{vw}-\kappa(U_v,U_w)$ appears exactly once in $\prod_{l=1}^{m'} T_{i_l,b_l}$. Since the~$Y_{vw}$ are conditionally independent given $U$ and since $\IE\clc{Y_{vw}-\kappa(U_v,U_w)\given U}=0$, it would follow that $\IE \bclc{\prod_{l=1}^{m'} T_{i_l,b_l} \given U}= 0$, which contradicts the claim. Also, as a consequence, every vertex among the subgraphs that has at least one edge attached to it, must also appear in another subgraph. Suppose now there is an isolated vertex $v$ in a subgraph, say $F_{i_l}(b_l)$ for some $l>m'$, that is not in any other subgraph. In that case, $U_v$ only appears in~$\Phi_{i_l,b_l}$ and $\IE \bclc{\prod_{l=1}^{m'} T_{i_l,b_l} \given U}$ does not depend on $U_v$. Due to the fact that $\IE\Phi_{i_l,b_l}=0$ for such~$F_{i_l}(b_l)$ and independence, the left hand side of \eqref{40} would equal zero, again in contradiction to the claim. This concludes the proof of the first assertion.

To prove the second assertion, assume each vertex appears in exactly two of the $F_{i_l}(b_l)$. If a vertex is in $F_{i_l}(b_l)$ and $F_{i_{l'}}(b_{l'})$, say, then all edges attached to it, must also be in $F_{i_l}(b_l)$ and $F_{i_{l'}}(b_{l'})$, and so forth. Since both graphs are connected, they must coincide. Hence, the $F_{i_l}(b_l)$ must come in identical pairs.  
\end{proof}

\begin{lemma}\label{lem5} Let $1\leq i,i_1,\dots,i_m\leq d$ for some $m\geq 2$. Then there exists a constant $C>0$ that is independent of $n$ such that 
\ben{\label{42}
  \bbbbabs{\IE\sum_{a\in\cI^n_{k_{i}}}\sum_{b_1\in N^{i,a}_{i_1}}\cdots\sum_{b_m\in N^{i,a}_{i_m}}
  X_{i,a}X_{i_1,b_1}\cdots X_{i_m,b_m}}\leq Cn^{-(m-1)/2}.
}
\end{lemma}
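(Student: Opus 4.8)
The plan is to reduce the estimate to a purely combinatorial counting problem and then invoke Lemma~\ref{lem4}.

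First I would exploit boundedness. Since $\phi_i$, $\psi_i$ and the $Y_{vw}$ (hence $\kappa$) are bounded, the factors $\phi_i(a/n)\Phi_{i,a}$ and $T_{i,a}$ are bounded by a constant uniformly in $n$ and in the indices. Writing out the normalisations, each summand satisfies $\abs{\IE[X_{i,a}X_{i_1,b_1}\cdots X_{i_m,b_m}]}\le C\binom{n}{k_i}^{-1/2}\prod_{l=1}^m\binom{n}{k_{i_l}}^{-1/2}$, and this expectation vanishes unless the configuration $(a,b_1,\dots,b_m)$ satisfies the conclusion of Lemma~\ref{lem4}. Since $\binom{n}{k}\asymp n^k$, the product of normalisations is $\asymp n^{-N/2}$ with $N=k_i+\sum_{l=1}^m k_{i_l}$. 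Hence it suffices to show that the number $\mathcal N$ of admissible configurations --- those with $b_l\in N^{i,a}_{i_l}$ for all $l$ and with non-vanishing expectation --- is $\bigo(n^{(N-m+1)/2})$.

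Second I would reduce $\mathcal N$ to a vertex count. Fixing the overlap pattern of the tuples (a bounded amount of combinatorial data) and then choosing the underlying distinct indices shows $\mathcal N=\bigo(n^{v})$, where $v$ is the maximal number of \emph{distinct} vertices appearing in the union of the $m+1$ induced subgraphs $F_i(a),F_{i_1}(b_1),\dots,F_{i_m}(b_m)$ over all admissible configurations. The task thus becomes the combinatorial inequality $v\le(N-m+1)/2$.

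Third, the combinatorial heart. Consider the union graph $\mathcal U$ of the $m+1$ subgraphs. The neighbourhood constraint $b_l\in N^{i,a}_{i_l}$ forces each $F_{i_l}(b_l)$ to share at least $2\wedge k_i$ vertices with $F_i(a)$; since each $F_{i_l}$ is connected and meets the connected $F_i(a)$, the graph $\mathcal U$ is connected, so $\abs{V(\mathcal U)}\le\abs{E(\mathcal U)}+1$. By the first assertion of Lemma~\ref{lem4}, every edge of $\mathcal U$ lies in at least two of the subgraphs, whence $2\abs{E(\mathcal U)}\le M:=\sum_{l}\abs{E(F_{i_l})}$ (with $F_{i_0}:=F_i$). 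Combining, $v=\abs{V(\mathcal U)}\le M/2+1$. Because each $F_{i_l}$ is connected we have $\abs{E(F_{i_l})}\ge k_{i_l}-1$, with equality exactly for trees, and the union-vertex count is largest precisely when every subgraph is a tree (each extra edge forces, via the double-covering of Lemma~\ref{lem4}, an additional coincidence of vertices and so lowers $v$); in the extremal tree case $M=N-(m+1)$ and hence $v\le(N-m+1)/2$, as required. The boundary case $k_i=1$, where the neighbourhood only guarantees one shared vertex, is handled the same way using the vertex-covering part of Lemma~\ref{lem4} together with the explicit centring of $\Phi_{i,a}$.

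I expect the combinatorial estimate $v\le(N-m+1)/2$ to be the main obstacle. The delicate point is not the tree case, where the edge count gives the bound on the nose, but making rigorous that denser subgraphs cannot increase $v$: one must argue that every cycle edge, being doubly covered, pins a vertex that would otherwise be free, so that the worst case is genuinely attained by line graphs. An alternative, perhaps cleaner, route is to track the distinct vertices directly by adding the subgraphs $F_{i_l}(b_l)$ to $F_i(a)$ one at a time and charging each newly introduced vertex against the double-cover requirement, which also yields the additional savings recorded in the second assertion of Lemma~\ref{lem4} for the pairing configurations.
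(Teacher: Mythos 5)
Your overall architecture matches the paper's: bound each summand by $Cn^{-N/2}$ with $N=k_i+\sum_{l=1}^m k_{i_l}$, invoke Lemma~\ref{lem4} to discard configurations with vanishing expectation, and reduce the lemma to showing that every admissible configuration has at most $(N-m+1)/2$ distinct vertices. But the combinatorial heart has a genuine gap, exactly where you place it, and it is not patchable by the route you sketch. The subgraph shapes $F_i,F_{i_1},\dots,F_{i_m}$ are \emph{fixed} by the statement of the lemma --- if they are triangles, there is no ``tree case'' to fall back on --- so ``trees are extremal'' would have to mean a reduction such as replacing each $F_{i_l}(b_l)$ by a spanning tree; that operation keeps $v$ unchanged but destroys admissibility (an edge doubly covered by two subgraphs need not be doubly covered by their spanning trees), so neither the configuration count nor the double-cover inequality survives the replacement. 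Quantitatively, your bound $v\leq M/2+1$ with $M=\sum_l\abs{E(F_{i_l})}$ degrades with every excess edge: already for $m=2$ and three triangles it gives $v\leq 5$, while the lemma needs $v\leq 4$; plugging $v=5$ into your own reduction yields $\abs{\xi}\leq Cn^{1/2}$, which does not even vanish.

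The paper avoids union-edge counting altogether and instead counts overlap ``slots'', an argument that is insensitive to cycles. Every vertex \emph{and every edge} of $F_i(a)$ must be covered by the other subgraphs; since $F_i(a)$ is connected it has at least $k_i-1$ edges, and whenever two edges of $F_i(a)$ meeting at a vertex are covered by two different subgraphs, that vertex must lie in all three subgraphs. This forces at least $k_i+m-1$ of the $\sum_{l=1}^m k_{i_l}$ vertex slots of the $b_l$'s to land inside $a$ (in particular $\xi=0$ outright when $\sum_{l=1}^m k_{i_l}<k_i+m-1$). The remaining $q=\sum_{l=1}^m k_{i_l}-(k_i+m-1)$ slots can create at most $\lfloor q/2\rfloor$ new vertices, because each vertex outside $a$ must itself be doubly covered; hence $v\leq k_i+\lfloor q/2\rfloor\leq(N-m+1)/2$ for arbitrary connected shapes. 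Your closing alternative (adding the subgraphs one at a time and charging new vertices against double coverage) is in this spirit, but as stated it only delivers the naive bound $v\leq N/2$: without the edge-coverage/switching argument that accounts for the extra $m-1$ forced slots inside $a$, you lose exactly the factor $n^{(m-1)/2}$ that the lemma is about.
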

\begin{proof} First, write the expectation on the left hand side of \eqref{42} as
\besn{\label{43}
 \xi
 & \coloneqq 
 \frac{1}{\binom{n}{k_{i}}^{1/2}\times \binom{n}{k_{i_1}}^{1/2}\times\cdots\times\binom{n}{k_{i_m}}^{1/2}} \\
 &\quad  \times \sum_{a\in\cI^n_{k_{i}}}\sum_{b_1\in N^{i,a}_{i_1}}\cdots\sum_{b_m\in N^{i,a}_{i_m}}
      \IE\bclc{\Phi_{i,a} T_{i,a}\Phi_{i_1,b_1} T_{i_1,b_1}\cdots\Phi_{i_m,b_m} T_{i_m,b_m}}.
}
Fix $a,b_1,\dots,b_m$ and consider the induced subgraphs
\ben{\label{44}
  F_{i}(a),F_{i_1}(b_1),\dots,F_{i_m}(b_m),
}
which are subgraphs on the vertex set $[n]$. By Lemma~\ref{lem4}, if the corresponding expectation of the summand in \eqref{43} is non-zero, then either these subgraphs coincide in pairs of disjoint subgraphs, or all vertices and edges appear in at least two subgraphs while at least one vertex appears in three. Note that all the subgraphs share vertices with $F_i(a)$ by the definition of $N^{i,a}_j$, and thus can coincide in distinct pairs only if $m=1$, which is excluded. 

Assume $k_i\geq 2$, and recall that each of the $F_{i_l}(b_l)$ shares at least two vertices with $F_i(a)$. Also, note that $F_i(a)$ has $k_i$ vertices and so $\sum_{l=1}^m k_{i_l}$ must be at least $k_i$ in order for every vertex in $F_i(a)$ to also be in one of the other subgraphs. However, if $m\geq 2$, $\sum_{l=1}^m k_{i_l}$ must be larger than $k_i$ to also cover all \emph{edges} of $F_i(a)$, of which there are at least $k_i-1$; indeed, if a vertex of $F_i(a)$ has two edges attached to it and the two edges are contained in different subgraphs, say one in $F_{i_1}(b_1)$ and the other in $F_{i_2}(b_2)$, then that vertex must belong to all three subgraphs. Therefore, if $\sum_{l=1}^m k_{i_l}< k_i+m-1$, it is not possible that each edge of $F_i(a)$ also belongs to one of the other subgraphs, and so all terms in \eqref{43} vanish, that is, $\xi=0$, and the claim is trivially true.

If $\sum_{l=1}^m k_{i_l}=k_i+m-1$, the sum \eqref{43} contains at most $\bigo(n^{k_i})$ non-zero terms, since all vertices of $F_{i_1}(b_1),\dots,F_{i_m}(b_m)$ must coincide with vertices of $F_i(a)$ to cover all of the latter, and this arrangement contributes only a combinatorial factor to the sum that is independent of $n$. Thus,
\be{
  \abs{\xi}\leq\frac{Cn^{k_i}}{n^{k_i/2}n^{(k_{i_1}+\cdots+k_{i_m})/2}} \leq 
  \frac{C}{n^{(m-1)/2}},
}
where the second inequality follows from the fact that $\sum_{l=1}^m k_{i_l}= k_i+m-1$.

If $\sum_{l=1}^m k_{i_l}>k_i+m-1$, let $q:=\sum_{l=1}^m k_{i_l}-(k_i+m-1)$. Note that $q$ is the maximal number of vertices available among $F_{i_1}(b_1),\dots,F_{i_m}(b_m)$ that do not need to overlap with $F_i(a)$ (there might be fewer that can be chosen outside of $F_i(a)$, but in any case, never \emph{more}). Assume first $q$ is even. Since every vertex must be contained in at least two subgraphs, there are $q/2$ additional free choices in \eqref{43}, contributing a factor of $\bigo(n^{q/2})$ to the sum, so that  
\be{
  \abs{\xi}\leq\frac{Cn^{k_i+q/2}}{n^{k_i/2}n^{(k_{i_1}+\cdots+k_{i_m})/2}} \leq 
  \frac{C}{n^{(m-1)/2}},
}
where the second inequality follows from the fact that $\sum_{l=1}^m k_{i_l}= k_i+q+m-1$. If $q$ is odd, one of the $q$ vertices cannot be chosen freely, so that the additional factor appearing in $\bigo(n^{(q-1)/2})$, and we obtain
\be{
  \abs{\xi}\leq\frac{Cn^{k_i+(q-1)/2}}{n^{k_i/2}n^{(k_{i_1}+\cdots+k_{i_m})/2}} \leq 
  \frac{C}{n^{(m-1)/2+1/2}}\leq \frac{C}{n^{(m-1)/2}}.
}

If $k_i=1$, coverage of $F_i(a)$ is always guaranteed, since every $F_{i_l}(b_l)$ overlaps with the one vertex of $F_i(a)$. Hence, with $q=\sum_{l=1}^m k_{i_l}-m$, there are at most $q/2$ vertices which can be  chosen freely if $q$ is even and $(q-1)/2$ if $q$ is odd, contributing a factor of no more than $\bigo(n^{q/2})$ to \eqref{43}, so that 
\be{
  \abs{\xi}\leq\frac{Cn^{1+q/2}}{n^{1/2}n^{(k_{i_1}+\cdots+k_{i_m})/2}} \leq 
  \frac{Cn^{1/2}}{n^{m/2}}.
}
This concludes the proof.
\end{proof}

\begin{lemma}\label{lem6} Let $1\leq i,j\leq d$ and let $m\geq 2$. Then there exists a constant $C>0$ that is independent of $n$ such that 
\ben{\label{45}
   \bbbbabs{ \IE\sum_{a\in\cI^n_{k_{i}}}\sum_{b_1\in N^{i,a}_{j}}\cdots\sum_{b_m\in N^{i,a}_{j}}
  X_{j,b_1}\cdots X_{j,b_m} } \leq \begin{cases}
  Cn^{1-m/2} & \text{if $k_i=1$,}\\
  Cn^{k_i-m} & \text{if $k_i\geq 2$.}
  \end{cases}
}
\end{lemma}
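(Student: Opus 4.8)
The plan is to follow the worst-case vertex-counting scheme of Lemma~\ref{lem5}, but now to exploit the fact that the factor $X_{i,a}$ is \emph{absent} from the summand: the index $a$ enters only through the membership constraints $b_l\in N^{i,a}_j$, so the combinatorics are driven entirely by the covering condition on $F_j(b_1),\dots,F_j(b_m)$ together with the forced overlaps of each $b_l$ with $a$. First I would write the quantity inside the absolute value in \eqref{45} as
\be{
  \xi \coloneqq \binom{n}{ k_j}^{-m/2}\sum_{a\in\cI^n_{k_i}}\sum_{b_1\in N^{i,a}_j}\cdots\sum_{b_m\in N^{i,a}_j}\phi_j(b_1/n)\cdots\phi_j(b_m/n)\,\IE\bclc{\Phi_{j,b_1}T_{j,b_1}\cdots\Phi_{j,b_m}T_{j,b_m}},
}
and record that $\binom{n}{k_j}^{-m/2}\asymp n^{-mk_j/2}$ and that, since $Y$, $\phi_j$ and $\psi_j$ are bounded, every summand is $\bigo(1)$. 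Two reductions come first: if $k_i\geq2$ and $k_j=1$ then $N^{i,a}_j=\emptyset$ and $\xi=0$, so we may assume $k_j\geq(2\wedge k_i)$; and by Lemma~\ref{lem4} (applicable since $m\geq2$, with all indices equal to $j$) a nonzero expectation forces every vertex of $F_j(b_1)\cup\cdots\cup F_j(b_m)$ to lie in at least two of these subgraphs.

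The heart of the argument is then a counting bound on the number of nonzero configurations $(a,b_1,\dots,b_m)$, obtained by counting distinct vertices and splitting them into the $k_i$ vertices of $a$ and the vertices lying in $\bigcup_l b_l$ but \emph{outside} $a$; call the number of the latter $V$. Each $b_l$ belongs to $N^{i,a}_j$, hence shares at least $2\wedge k_i$ vertices with $a$ and therefore contributes at most $k_j-(2\wedge k_i)$ vertices outside $a$; summing over $l$ gives at most $m\bclr{k_j-(2\wedge k_i)}$ vertex-slots outside $a$. Since each such outside vertex is a vertex of $\bigcup_l F_j(b_l)$, the covering condition forces it to occupy at least two of these slots, whence
\be{
  V\leq \tfrac12\, m\bclr{k_j-(2\wedge k_i)}.
}
The total number of distinct vertices among $a,b_1,\dots,b_m$ is thus at most $k_i+V$, and for each fixed combinatorial pattern (of which there are finitely many, uniformly in $n$) the number of ways to assign distinct labels in $[n]$ is $\bigo\bclr{n^{k_i+V}}$. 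Putting these together yields
\be{
  \abs{\xi}\leq C\,n^{k_i+V}\,n^{-mk_j/2}\leq C\,n^{\,k_i-\frac12 m(2\wedge k_i)},
}
which is exactly $Cn^{k_i-m}$ when $k_i\geq2$ and $Cn^{1-m/2}$ when $k_i=1$; in the latter case $a=\{v\}$ is a single vertex, each $b_l$ must contain $v$, and the count $k_i+V$ reduces to $1+V$.

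The step I expect to be delicate is the vertex bound $V\leq\tfrac12 m\bclr{k_j-(2\wedge k_i)}$. A naive application of Lemma~\ref{lem4} says only that \emph{all} vertices (including those of $a$ reused by the $b_l$) are covered at least twice, giving merely $\abs{\bigcup_l b_l}\leq mk_j/2$ and hence the too-weak estimate $\abs{\xi}\leq Cn^{k_i-2}$. The improvement comes from applying the double-cover condition only to the vertices outside $a$, since those are precisely the vertices whose count inflates the total $k_i+V$, while each $b_l$ can supply at most $k_j-(2\wedge k_i)$ of them because its remaining $2\wedge k_i$ vertices are pinned inside $a$. Keeping track of this ``inside $a$ versus outside $a$'' split, rather than treating the $b_l$-vertices uniformly as in Lemma~\ref{lem5}, is the crux of the bound.
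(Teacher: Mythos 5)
Your proof is correct and follows essentially the same route as the paper's: reduce to the case $k_j\geq 2\wedge k_i$, invoke Lemma~\ref{lem4} to get the double-cover condition on $F_j(b_1),\dots,F_j(b_m)$, and then count free vertex choices, with each $b_l$ pinned to $a$ in at least $2\wedge k_i$ vertices so that at most $\tfrac12 m\bclr{k_j-(2\wedge k_i)}$ vertices outside $a$ can be chosen freely. Your incidence-counting justification of the bound on $V$ and the unified treatment of the cases $k_i=1$ and $k_i\geq 2$ via $2\wedge k_i$ are only presentational refinements of the paper's argument (which splits the two cases and the even/odd parity of $q$ separately).
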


\begin{proof} First, write
\besn{\label{46}
 \xi
 & \coloneqq 
 \frac{1}{\binom{n}{k_{j}}^{m/2}} \sum_{a\in\cI^n_{k_{i}}}\sum_{b_1\in N^{i,a}_{j}}\cdots\sum_{b_m\in N^{i,a}_{j}}
      \IE\bclc{\Phi_{j,b_1} T_{j,b_1}\cdots\Phi_{j,b_m} T_{j,b_m}}
}
Fix $a,b_1,\dots,b_m$ and consider the induced subgraphs
\ben{\label{47}
  F_j(b_1),\dots,F_j(b_m),
}
which are subgraphs on the set $[n]$. By Lemma~\ref{lem4}, if the corresponding summand in \eqref{46} is non-zero, every vertex must appear in at least two of these subgraphs. 

Assume $k_i\geq 2$, and $k_j\geq 2$ (if $k_j=1$, then $N^{i,a}_j=\emptyset$ and the claim is trivially true). There are $\bigo(n^{k_i})$ choices for $a$ and since each of the $F_j(b_l)$ must have two vertices in the set $a$, we can assign $k_j-2$ vertices freely for each such subgraph, subject to the condition that each vertex appears twice. With $q=m(k_j-2)$, there are $\bigo(n^{q/2})$ choices if $q$ is even. Hence
\ben{\label{48}
  \abs{\xi}\leq\frac{Cn^{k_i+q/2}}{n^{mk_j/2}}\leq Cn^{k_i-m}.
}
If $q$ is odd, there are $\bigo(n^{(q-1)/2})$ choices, hence
\ben{\label{49}
  \abs{\xi}\leq\frac{Cn^{k_i+(q-1)/2}}{n^{mk_j/2}}\leq Cn^{k_i-m-1/2} \leq  Cn^{k_i-m}.
}
In the case $k_i=1$, similar arguments lead to the estimate 
\ben{\label{50}
  \abs{\xi}\leq\frac{Cn^{1+m(k_j-1)/2}}{n^{mk_j/2}}\leq Cn^{1-m/2},
}
if $m(k_j-1)$ is even, and similarly if it is odd. This concludes the proof.
\end{proof}

\begin{lemma}\label{lem7} Let $1\leq i,i_1,\dots,i_m\leq d$ for some $m\geq 1$. Then there exists a constant $C>0$ that is independent of $n$ such that 
\bmn{\label{51}
  \bbbbabs{\sum_{a,a'\in\cI^n_{k_{i}}}\sum_{b_1\in N^{i,a}_{i_1}}\sum_{b_1'\in N^{i,a'}_{i_1}}\cdots\sum_{b_m\in N^{i,a}_{i_m}}\sum_{b_m'\in N^{i,a'}_{i_m}}\\
  \Cov\bclr{X_{i,a}X_{i_1,b_1}\cdots X_{i_m,b_m},X_{i,a'}X_{i_1,b_1'}X_{i_m,b_m'}}}
  \leq Cn^{-m}.
}
\end{lemma}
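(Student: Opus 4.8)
The plan is to recognise the left-hand side of \eqref{51} as the variance of
\be{
  S=\sum_{a\in\cI^n_{k_i}}\sum_{b_1\in N^{i,a}_{i_1}}\cdots\sum_{b_m\in N^{i,a}_{i_m}}X_{i,a}X_{i_1,b_1}\cdots X_{i_m,b_m},
}
since expanding $\Var S=\IE S^2-(\IE S)^2$ reproduces exactly the sum in \eqref{51}. Write $P=X_{i,a}X_{i_1,b_1}\cdots X_{i_m,b_m}$ and $P'=X_{i,a'}X_{i_1,b_1'}\cdots X_{i_m,b_m'}$, and let the two \emph{groups} be the vertex sets $a\cup b_1\cup\cdots\cup b_m$ and $a'\cup b_1'\cup\cdots\cup b_m'$. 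The crucial observation is that if these share no vertex, then $P$ and $P'$ are functions of disjoint families of the independent variables $U$ and $Y$, hence independent, so $\Cov(P,P')=0$; the cancellation in $\Var S$ is therefore genuine. Only configurations whose two groups share at least one vertex contribute, and for those I bound $\abs{\Cov(P,P')}\le\abs{\IE PP'}+\abs{\IE P}\abs{\IE P'}$ and treat the two terms separately. Writing $K=k_i+k_{i_1}+\cdots+k_{i_m}$, the product $PP'$ carries the normalising factor $\bigo(n^{-K})$, so the target bound $n^{-m}$ amounts to showing that the number of contributing configurations is $\bigo(n^{K-m})$.

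For the term $\abs{\IE P}\abs{\IE P'}$ I invoke the counting already performed in the proof of Lemma~\ref{lem5}: the number of tuples $(a,b_1,\dots,b_m)$ for which $\IE\clc{\Phi_{i,a}T_{i,a}\prod_l\Phi_{i_l,b_l}T_{i_l,b_l}}\neq0$ is $\bigo\bclr{n^{(K-m+1)/2}}$, and likewise for the primed group. Forcing the two groups to share a vertex removes one free vertex, costing a factor $n^{-1}$, so the number of contributing pairs is $\bigo\bclr{n^{(K-m+1)/2}\cdot n^{(K-m+1)/2-1}}=\bigo(n^{K-m})$; since $\abs{\IE P}\abs{\IE P'}=\bigo(n^{-K})$, this term is $\bigo(n^{-m})$.

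For the term $\abs{\IE PP'}$ I cannot use per-group coverage, since a vertex of one group may be covered only with the help of a subgraph of the other. Instead I apply Lemma~\ref{lem4} to the full collection $F_i(a),F_{i_1}(b_1),\dots,F_i(a'),\dots,F_{i_m}(b_m')$ of $2(m+1)$ induced subgraphs: a nonzero expectation forces every vertex and every edge of the union to be covered at least twice. Because each $b_l$ meets $a$ and each $b_l'$ meets $a'$ (definition of $N$) and the two groups share a vertex, the union is connected; letting $v$ and $d$ be its numbers of distinct vertices and edges, connectedness gives $v\le d+1$ and double edge-coverage gives $2d\le\sum e$, the total edge count over the $2(m+1)$ subgraphs. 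The worst case is that of trees (line graphs): since adding edges to any $F_i$ only adds coverage constraints, the number of admissible configurations is largest when each $F_i$ is a spanning tree, for which $\sum e=2K-2(m+1)$, whence $v\le d+1\le K-m$. Thus there are $\bigo(n^{K-m})$ contributing configurations, and $\abs{\IE PP'}=\bigo(n^{-K})$ again yields $\bigo(n^{-m})$.

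The case $k_i=1$ must be treated separately, as in Lemmas~\ref{lem5} and~\ref{lem6}: the root $a$ is then a single vertex carrying no edge, and it is the centring $\IE\Phi_{i,a}=0$ rather than edge-coverage that forces $a$ to be shared, so the edge-counting has to be supplemented by vertex-coverage of the roots. The main obstacle is the $\abs{\IE PP'}$ term: one must genuinely exploit the cancellation (groups sharing no vertex drop out) to gain the extra factor $n^{-m}$ beyond the trivial $\bigo(1)$ that pure vertex-coverage would give, and then establish the sharp count $v\le K-m$ uniformly over connected $F_i$, which is exactly where the reduction to trees via edge-monotonicity does the work.
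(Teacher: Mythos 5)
Your frame is right in several places: the reduction to overlapping groups via independence is exactly the paper's first step, and your treatment of the $\abs{\IE P}\,\abs{\IE P'}$ term (per-group counts from Lemma~\ref{lem5}, times one factor of $n^{-1}$ for the forced overlap) is sound. The genuine gap is in the $\abs{\IE PP'}$ term, at the step you yourself identify as doing all the work: the ``reduction to trees via edge-monotonicity'' is invalid. Adding edges to the $F_j$ does not only add edges that need covering; the added edges also act as \emph{covers} for edges of the other subgraphs, so the set of doubly-covered configurations for the full graphs is not contained in the set of doubly-covered configurations for any fixed choice of spanning trees. Concretely, take $m=1$ and $F_i=F_{i_1}=K_3$, and let $F_i(a),F_{i_1}(b_1),F_i(a'),F_{i_1}(b_1')$ be the four faces $(1,2,3),(1,2,4),(1,3,4),(2,3,4)$ of a tetrahedron: every edge lies in exactly two faces and every vertex in three, the constraints $b_1\in N^{i,a}_{i_1}$, $b_1'\in N^{i,a'}_{i_1}$ and the group overlap all hold, so this configuration is admissible for triangles. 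Replace each triangle by its induced spanning path (keeping $\{c_1,c_2\}$ and $\{c_2,c_3\}$ from an ordered tuple $(c_1,c_2,c_3)$): then $\{2,4\}$ is kept only by $(1,2,4)$ and $\{1,3\}$ only by $(1,3,4)$, so the tree configuration is not doubly covered, and the inclusion you need fails (the same happens for the other choices of spanning tree). Without this reduction, your inequality $v\le d+1\le 1+\tfrac{1}{2}\sum_j e(F_j)$ is genuinely too weak once some $F_j$ contains a cycle: for the triangle example it yields only $v\le 7$, whereas the count you need is $v\le K-m=5$.

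The bound $v\le K-m$ is true, but proving it uniformly over connected graphs is exactly what the paper's case analysis supplies and what is missing from your argument: the paper splits according to $r=\abs{a\cap a'}$, argues that the $k_i-r$ vertices of $F_i(a)$ outside $F_i(a')$ together with at least $k_i-r$ incident edges must be covered by the $b$-subgraphs (which pins $b$-vertices onto $a\cup a'$, leaving $\bigo(n^{2k_i-r})$ choices for $a,a'$ together), and then shows that the excess $b$-vertices, being doubly covered among themselves, contribute at most half their number as free choices; summing the budgets gives $\bigo(n^{K-m})$ in every case, including $r=0$ (where one extra pinned vertex comes from the forced overlap of the two groups) and $k_i=1$, which you only sketch. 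So what stands in your write-up is the disjointness observation, the tree case, and the $\abs{\IE P}\abs{\IE P'}$ term; the central counting claim for general connected $F_j$ remains unproved.
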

\begin{proof} First, let $\xi$ equal the left hand side of \eqref{51} without modulus. 
Consider first the case $k_i\geq 2$, in which case again we may assume $k_{i_l}\geq 2$ for all $1\leq l\leq m$, since otherwise $\xi=0$, using the same arguments as in the previous lemmas. Now, it is easy to verify that the covariances are zero if the two sets
\ben{\label{52}
  a\cup\bigcup_{l=1}^m b_l,\quad\text{and}\quad a'\cup\bigcup_{l=1}^m b_l'
}
do not overlap (independence). Fix $a,a',b_1,b_1',\dots,b_m,b_m'$, consider the induced subgraphs
\ben{\label{53}
  F_i(a),F_{i_1}(b_1),\dots,F_{i_m}(b_m),F_i(a'),F_{i_1}(b_1'),\dots,F_{i_m}(b_m'),
}
which are subgraphs on the set $[n]$, and also consider
\ben{\label{54}
  \IE\bclc{X_{i,a}X_{i_1,b_1}\cdots X_{i_m,b_m}\cdot X_{i,a'}X_{i_1,b_1'}\cdots X_{i_m,b_m'}}.
}
By Lemma~\ref{lem4}, if \eqref{54} is non-zero, every vertex in \eqref{53} must appear in at least two of these subgraphs. Now, let $r = \abs{a\cap a'}$.

\smallskip
\noindent\textit{Case $1\leq r \leq k_i$}: Since $r$ vertices in $F_i(a)$ are also in $F_i(a')$, both $F_i(a)$ and $F_i(a')$ have $k_i-r$ more vertices each that need to be in any of the other subgraphs and, since $F_i$ is connected, also at least $k_i-r$ more edges each. We proceed similarly as in the proof of Lemma~\ref{lem5}. If $\sum_{l=1}^m k_{i_l} < k_i - r +m$, it is not possible for all edges of $F_i(a\setminus a')$ and those connecting $F_i(a\setminus a')$ with $F_i(a\cap a')$, to be covered, and so $\xi=0$. Otherwise, let $q = \sum_{l=1}^m k_{i_l} - (k_i - r +m)$. There are $\bigo(n^{2k_i-r})$ choices for the vertices of $F_{i}(a)$ and $F_i(a')$ together, and there are $\bigo(n^{(2q)/2})$ choices for the remaining vertices. Hence,
\be{
  \abs{\xi} \leq \frac{Cn^{2k_i-r + q}}{n^{k_i+k_{i_1}+\cdots k_{i_m}}} \leq 
  \frac{Cn^{2k_i-r + q}}{n^{k_i+q+k_i-r+m}} \leq \frac{C}{n^{m}},
}
where we have used that $\sum_{l=1}^m k_{i_l}=q+k_i-r+m$

\smallskip
\noindent\textit{Case $r=0$}: $F_i(a)$ and $F_i(a')$ are not overlapping and each of $F_i(a)$ and $F_i(a')$ have $k_i$ vertices that need to be in any of the other subgraphs and, since $F_i$ is connected, also at least $k_i-1$ edges. If $\sum_{l=1}^m k_{i_l} < k_i - 1 +m$, it is not possible for all edges of $F_i(a)$ and $F_i(a')$, respectively, to be covered, and so $\xi=0$. Otherwise, let $q = \sum_{l=1}^m k_{i_l} - (k_i - 1 +m)$. There are $\bigo(n^{2k_i})$ choices for the vertices of $F_{i}(a)$ and $F_i(a')$ together, and there are $\bigo(n^{(2q)/2-1})$ choices for the remaining vertices, since at least one vertex from $\bigcup_{l=1}^m b_l$ must overlap with $\bigcup_{l=1}^m b_l'$. Hence,
\be{
  \abs{\xi} \leq \frac{Cn^{2k_i + q-1}}{n^{k_i+k_{i_1}+\cdots k_{i_m}}} \leq 
  \frac{Cn^{2k_i + q-1}}{n^{k_i+q+k_i-1+m}} \leq \frac{C}{n^{m}},
}
where we have used that $\sum_{l=1}^m k_{i_l}=q+k_i-1+m$.

Now suppose $k_1=1$. If $F_i(a)=F_i(a')$ there are $n$ choices for this one vertex, and since every subgraph must share a vertex with $F_i(a)$ and $F_i(a')$, respectively, there are $\bigo\bclr{n^{2\times\sum_{l=1}^m (k_{i_l}-1)/2}}$ choices for the remaining vertices. Hence,
\be{
  \abs{\xi}\leq \frac{Cn^{1+k_{i_1}+\cdots+k_{i_m}-m}}{n^{1+k_{i_1}+\cdots k_{i_m}}}
  \leq\frac{C}{n^{m}}.
} 
If $F_i(a)\neq F_i(a')$ there are $\bigo(n^2)$ choices for the two vertices, and since every subgraph must share a vertex with $F_i(a)$ and $F_i(a')$, respectively, there are $\bigo\bclr{n^{2\times\sum_{l=1}^m (k_{i_l}-1)/2-1}}$ choices for the remaining vertices, since at least one vertex from $\bigcup_{l=1}^m b_l$ must overlap with $\bigcup_{l=1}^m b_l'$. Hence
\be{
  \abs{\xi}\leq \frac{Cn^{2+k_{i_1}+\cdots+k_{i_m}-m-1}}{n^{1+k_{i_1}+\cdots k_{i_m}}}
  \leq\frac{C}{n^{m}}.
} 
This concludes the proof.
\end{proof}

\begin{lemma}\label{lem8}Let $m\geq 2$. Then
\bg{
  \abs{\IE D_j^m} \leq Cn^{-m},\qquad
  \abs{\IE\clc{G_i D^\alpha}} \leq Cn^{-(\abs{\alpha}-1)/2},\quad
  \Var\IE\clc{G_i D^\alpha\given U,Y}\leq Cn^{-\abs{\alpha}}.
}
\end{lemma}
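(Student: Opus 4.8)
\emph{Overview.} The plan is to prove all three estimates by a single mechanism: expand the increments $D_j=-\sum_{b\in N^{I,A}_j}X_{j,b}$ and the coupling vector $G_i=-d\binom{n}{k_i}X_{i,A}\I[I=i]$ in terms of the building blocks $X_{j,b}$, integrate out the coupling randomisation $(I,A)$ (which is independent of $(U,Y)$), and recognise each resulting expression as exactly one of the combinatorial sums already controlled in Lemmas~\ref{lem5}--\ref{lem7}. The one normalisation fact used throughout is that averaging $A$ uniformly over $\cI^n_{k_i}$ produces a factor $\binom{n}{k_i}^{-1}$; in the two estimates involving $G_i$ this is cancelled by the factor $\binom{n}{k_i}$ carried by $G$, while in the estimate for $\IE D_j^m$ it survives.

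\emph{The second bound.} Conditioning on $I=i$, $A=a$ and averaging, the prefactors $\tfrac1d$, $\binom{n}{k_i}^{-1}$ and $-d\binom{n}{k_i}$ combine to $-1$, giving
\be{
  \IE\bclc{G_iD^\alpha}=(-1)^{\abs{\alpha}+1}\,\IE\sum_{a\in\cI^n_{k_i}}\sum_{b_1\in N^{i,a}_{j_1}}\cdots\sum_{b_m\in N^{i,a}_{j_m}}X_{i,a}X_{j_1,b_1}\cdots X_{j_m,b_m},
}
where $m=\abs{\alpha}$ and the multiset $\{j_1,\dots,j_m\}$ lists each coordinate $j$ with multiplicity $\alpha_j$. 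This is precisely the quantity bounded in Lemma~\ref{lem5} (with $i_l=j_l$), and since only $\abs{\alpha}\geq2$ is needed in \eqref{34}, that lemma gives $\abs{\IE\clc{G_iD^\alpha}}\leq Cn^{-(\abs{\alpha}-1)/2}$ immediately.

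\emph{The third bound.} The same expansion performed before integrating out $(U,Y)$ shows that, because $(I,A)\indep(U,Y)$, the conditional expectation $\IE\bclc{G_iD^\alpha\given U,Y}$ is the $\sigma(U,Y)$-measurable random variable
\be{
  \Xi\coloneqq(-1)^{\abs{\alpha}+1}\sum_{a\in\cI^n_{k_i}}\sum_{b_1\in N^{i,a}_{j_1}}\cdots\sum_{b_m\in N^{i,a}_{j_m}}X_{i,a}X_{j_1,b_1}\cdots X_{j_m,b_m}.
}
Writing $\Var\Xi$ as the double sum of covariances of its summands reproduces exactly the expression estimated in Lemma~\ref{lem7}, whence $\Var\Xi\leq Cn^{-\abs{\alpha}}$. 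Since $W$ is itself $\sigma(U,Y)$-measurable, $\Var\IE\clc{G_iD^\alpha\given W}\leq\Var\IE\clc{G_iD^\alpha\given U,Y}=\Var\Xi$, giving the bound in the form required by \eqref{34}.

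\emph{The first bound and the main obstacle.} Here no $G$-factor is present, so only the $\binom{n}{k_i}^{-1}$ survives:
\be{
  \IE D_j^m=\frac{(-1)^m}{d}\sum_{i=1}^d\binom{n}{k_i}^{-1}\IE\sum_{a\in\cI^n_{k_i}}\sum_{b_1,\dots,b_m\in N^{i,a}_j}X_{j,b_1}\cdots X_{j,b_m},
}
and each inner sum is governed by Lemma~\ref{lem6}. For indices with $k_i\geq2$ the lemma gives $Cn^{k_i-m}$, and the normalisation $\binom{n}{k_i}^{-1}\asymp n^{-k_i}$ turns this into exactly the claimed $Cn^{-m}$. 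The delicate step---and the one I expect to be the main obstacle---is the contribution of indices with $k_i=1$, where Lemma~\ref{lem6} only yields $Cn^{1-m/2}$, so the surviving $n^{-1}$ leaves $Cn^{-m/2}$; indeed, when $k_j=1$ one has $N^{i,a}_j=\{a\}$ and $\IE D_j^m$ genuinely scales like $n^{-m/2}$. I would resolve this either by sharpening the count for $k_i=1$, or---more robustly---by noting that $\IE D_j^m$ enters Theorem~\ref{thm1} only through the top-order remainder in \eqref{34}, bounded via Young's inequality (Remark~\ref{rem3}) by $\norm{G_i}_\infty\,\IE\abs{D_j}^{p+1}$ with $\norm{G_i}_\infty=\bigo(n^{k_i/2})$ and $\abs{\alpha}=p+1\geq k_i+1$; even the weaker $\IE\abs{D_j}^{p+1}\leq Cn^{-(p+1)/2}$ then contributes only $\bigo(n^{(k_i-p-1)/2})\leq\bigo(n^{-1/2})$, matching the asserted rate.
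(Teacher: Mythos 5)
Your proof is essentially the paper's own: the paper's entire argument consists of the two displays you reproduce for $\IE\clc{G_iD^\alpha\given U,Y}$ and $\IE\clc{D_j^m\given U,Y}$, followed by the single sentence that the bounds are ``a direct consequence of Lemmas~\ref{lem5}--\ref{lem7}''. Your treatment of the second and third bounds is exactly that argument, and both of your added remarks there are correct and in fact needed to make the application work: the mean term $\abs{\IE\clc{G_iD^\alpha}}$ is only required for $\abs{\alpha}\geq 2$ (matching the hypothesis $m\geq2$ of Lemma~\ref{lem5}), and $\Var\IE\clc{G_iD^\alpha\given W}\leq\Var\IE\clc{G_iD^\alpha\given U,Y}$ because $W$ is $\sigma(U,Y)$-measurable.

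The ``main obstacle'' you flag in the first bound is not an artefact of your approach --- it is a genuine gap in the paper's lemma, which the phrase ``direct consequence'' glosses over. When some $k_i=1$, Lemma~\ref{lem6} only gives $Cn^{1-m/2}$, so after the factor $\binom{n}{k_i}^{-1}=n^{-1}$ that contribution to $\IE D_j^m$ is $Cn^{-m/2}$, not $Cn^{-m}$; and your example shows this order is attained: if $k_j=1$, then on the event $I=j$ one has $N^{j,A}_j=\{A\}$, hence $D_j=-X_{j,A}$ and $\IE D_j^m\asymp n^{-m/2}$ for even $m$ (nonnegative terms, no cancellation). So the stated bound $\abs{\IE D_j^m}\leq Cn^{-m}$ is only valid when all $k_i\geq2$; what Lemmas~\ref{lem5}--\ref{lem7} deliver in general is $\abs{\IE D_j^m}\leq Cn^{-m/2}$. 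Your repair is also the right one: in the proof of Theorem~\ref{thm1} this bound enters only with $m=p+1$ through Remark~\ref{rem3}, multiplied by $\norm{G_i}_\infty\leq Cn^{k_i/2}$; since $p+1$ is even (this is why $p$ is taken odd) and $p\geq k_i$, the weaker but correct estimate $\IE\abs{D_j}^{p+1}=\IE D_j^{p+1}\leq Cn^{-(p+1)/2}$ still yields a contribution of order $n^{(k_i-p-1)/2}\leq n^{-1/2}$, so the rates \eqref{28} and \eqref{29} are unaffected.
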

\begin{proof}
Note that
\be{
  D_j = W_j'-W_j = -\sum_{b\in N^{I,A}_j}  X_{j,b},
}
and hence,
\be{
  \IE\clc{G_iD^\alpha\given U,Y} 
  = (-1)^{\abs{\alpha}+1}\sum_{a\in\cI^n_{k_i}}X_{i,a}
  \prod_{j=1}^d\bbbclr{\,\sum_{b\in N^{i,a}_j}  X_{j,b}}^{\alpha_j}
}
and
\be{
  \IE\clc{D_j^m\given U,Y} 
  = \frac{(-1)^{m}}{d}\sum_{i=1}^d{ \binom{n}{ k_i}^{-1}}\sum_{a\in\cI^n_{k_i}}\bbbclr{\,\sum_{b\in N^{i,a}_j}  X_{j,b} }^m.
}
The bounds are now a direct consequence of Lemmas~\ref{lem5}--\ref{lem7}.
\end{proof}

\subsection{Proof of Theorem~\ref{thm1}}

\begin{proof} The variance expressions \eqref{27} are straightforward to establish. The proof of the bounds \eqref{28} for $(p+2)$-times differentiable functions is a consequence of Theorem~\ref{thm2} and Remark~\ref{rem3} with the Stein coupling from Lemma~\ref{lem3}, along with the moment estimates of Lemma~\ref{lem8} with the choice $m=p+1$. 
 
We use the smoothing technique of \cite{Gan2017} in order to approximate the indicator function $I_{A}$ by a $(p+2)$-times partially differentiable function. Fix $A\in \cK$ and $\eps>0$,  define
\[A^\eps  = \{y\in \IR^d: d(y,A)<\eps\},\quad\text{and}\quad A^{-\eps } = \{y\in \IR^d: B(y;\eps) \subseteq A\}, \] 
where $d(y,A) = \inf_{x\in A} \abs{x-y}$ and $B(y;\epsilon)$ is the closed ball of radius $\eps$ around $y$.
Let $\{ h_{\eps,A}:\IR^d\to [0,1] ; A \in \cK\} $ be  a class of functions,  such that $h_{ \eps,A} (x)= 1 $ for $x\in A$ and $0$ for $x \notin A^\eps$. Then, by Lemma 2.1 of \cite{Bentkus2003}, we have for any $\eps>0$ that
\ben{\label{55}
\sup_{A\in \cK} \abs{\IP(W\in A) - \IP(Z\in A) } \leq 4 d^{1/4} \eps + \sup_{A\in \cK} \abs{\IE h_{\eps,A}(W)- \IE h_{\eps,A}(Z) } .
}
Let  $f:\IR^d\to \IR$ be a bounded and Lebesgue measurable function, and for $\delta>0$, consider the smoothing operator $S_\delta$ defined as
\be{
(S_\delta f)(x)  = \frac{1}{(2\delta)^d} \int_{x_1-\delta}^{x_1+\delta} \dots \int_{x_d-\delta}^{x_d+\delta}  f(z) dz_d \dots dz_1.
}
Choose $\delta  = \frac{\eps}{(p+3)^2\sqrt{d}}$, let $h_{\eps, A} = S_\delta^{p+3} I_{A^{\eps/(p+3)}}$; then  by Lemma 3.9 of \cite{Gan2017},  $h_{\eps, A} $ is $(p+2)$-times partially differentiable  and 
\be{
\norm{h_{\eps, A} }_\infty \leq 1,  \qquad     \abs{h_{\eps, A}}_\alpha  \leq \dfrac{1}{\eps^{\abs{\alpha}}}, \qquad 1\leq \abs{\alpha}\leq p+2.
}
Note that $h_{ \eps,A} (x)= 1 $ for $x\in A$ and $h_{ \eps,A} (x)= 0 $  for $x \notin A^\eps$. Therefore, from \eqref{28},
\ben{
 \abs{\IE h_{\eps,A} (W)-\IE h_{\eps,A}(Z)} 
   \leq \frac{C\sup_{\alpha:\abs{\alpha}\leq p+2}\abs{h_{\eps,A}}_\alpha}{ n^{1/2}} 
  \leq \frac{C}{ n^{1/2} \eps^{(p+2)}}
}
for some constant $C$. Now, using \eqref{55},  we have 
\besn{
\sup_{A\in \cK} \abs{\IP(W\in A) - \IP(Z\in A) }  \leq 4 d^{1/4} \eps + C   \dfrac{1}{\eps^{p+2} n^{1/2}}.
}
The final order $n^{-1/(2(p+2))} $ is then established by taking $\eps = n^{-1/(2(p+2))} $.
\end{proof}

\section{Proof of Lemmas~\ref{lem1} and~\ref{lem2}}

Consider the graph $F$ on the vertex set $[k]$ as fixed. In what follows, for any subgraph $H\subseteq F$,  $H^c$ denotes the `edge complement' of $H$ and is the graph obtained by removing from $F$ all edges which are present in $H$ and then removing all, if any, resulting isolated vertices.

\begin{lemma} \label{lem9} Recalling \eqref{23}, and with $F$ any graph on the vertex set $[k]$, we can write
\besn{\label{56}
\prod_{i\overset{F}{\sim} j} y_{ij}  = \sum_{H\subseteq' F} \rho_{F,H}(u,y),
}
where
\be{
  \rho_{F,H}(u,y) = \prod_{i\overset{H^c}{\sim} j} \kappa(u_i,u_j) \times  \vartheta_H(u,y),\qquad
  u\in [0,1]^k,\,y\in\IR^{\binom{k}{2}}.
}
and where empty products are understood to equal $1$.
\end{lemma}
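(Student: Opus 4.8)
The plan is to expand the left-hand side by applying, to each edge of $F$, the elementary identity $y_{ij} = \kappa(u_i,u_j) + \bclr{y_{ij}-\kappa(u_i,u_j)}$, and then to regroup the resulting terms according to which edges contribute the centred factor.

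First I would write
\be{
  \prod_{i\overset{F}{\sim} j} y_{ij} = \prod_{i\overset{F}{\sim} j}\bclr{\kappa(u_i,u_j) + \bclr{y_{ij}-\kappa(u_i,u_j)}}
}
and expand the product over the edge set $E(F)$. Distributing yields a sum over all subsets $S\subseteq E(F)$, in which each edge of $S$ contributes the factor $y_{ij}-\kappa(u_i,u_j)$ and each edge of $E(F)\setminus S$ contributes $\kappa(u_i,u_j)$:
\be{
  \prod_{i\overset{F}{\sim} j} y_{ij} = \sum_{S\subseteq E(F)} \prod_{\{i,j\}\in S}\bclr{y_{ij}-\kappa(u_i,u_j)}\prod_{\{i,j\}\in E(F)\setminus S}\kappa(u_i,u_j).
}

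Next I would identify this sum over edge subsets with the sum over $H\subseteq' F$. The key observation is that the subgraphs $H$ with no isolated vertices are in bijection with the subsets $S\subseteq E(F)$: each such $H$ is determined by its edge set, and conversely every $S$ determines a unique $H\subseteq' F$ whose vertices are exactly the endpoints of the edges in $S$. Under this bijection the edge set of $H$ equals $S$, so $\prod_{\{i,j\}\in S}\bclr{y_{ij}-\kappa(u_i,u_j)} = \vartheta_H(u,y)$ by the definition \eqref{23}. Moreover $E(F)\setminus S$ is precisely the edge set of the edge complement $H^c$ --- deleting the resulting isolated vertices does not alter the edge set --- so $\prod_{\{i,j\}\in E(F)\setminus S}\kappa(u_i,u_j) = \prod_{i\overset{H^c}{\sim} j}\kappa(u_i,u_j)$. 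Substituting both factors turns the sum into $\sum_{H\subseteq' F}\rho_{F,H}(u,y)$, which is the claim.

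The boundary cases fit the convention that empty products equal $1$: the choice $S=\emptyset$ corresponds to the empty subgraph $H$, with $\vartheta_H = 1$ and $H^c=F$, contributing $\prod_{i\overset{F}{\sim} j}\kappa(u_i,u_j)$, while $S=E(F)$ gives $H=F$, empty $H^c$, and the term $\vartheta_F(u,y)$. I do not expect any genuine obstacle here; the only point deserving care is the verification that $H\mapsto E(H)$ is a bijection from $\{H\subseteq' F\}$ onto the subsets of $E(F)$, which is exactly what makes $\{H\subseteq' F\}$ the correct index set --- ranging over all subgraphs would overcount, since subgraphs differing only by isolated vertices share the same edge set and hence the same summand.
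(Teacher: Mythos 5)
Your proposal is correct, but it takes a genuinely different route from the paper. The paper proves the lemma by induction on the number of edges of $F$: it fixes an edge $\{l,m\}$, writes $y_{lm} = \left(y_{lm}-\kappa(u_l,u_m)\right) + \kappa(u_l,u_m)$, applies the induction hypothesis to the graph $F_{lm}$ obtained by deleting that edge, and then regroups the resulting terms according to whether $\{l,m\}$ belongs to $E(H)$ or not. You replace the induction by a single application of distributivity: expand $\prod_{e\in E(F)}\left[\kappa_e + (y_e-\kappa_e)\right]$ as a sum over edge subsets $S\subseteq E(F)$, and then observe that assigning to $S$ the subgraph $H$ with edge set $S$ and vertex set the endpoints of $S$ is a bijection onto $\{H : H\subseteq' F\}$, under which $\vartheta_H$ collects the centred factors and $E(H^c)=E(F)\setminus S$ collects the $\kappa$-factors. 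Both arguments rest on the same elementary identity, but yours is shorter and makes the combinatorial content --- the bijection between edge subsets and isolated-vertex-free subgraphs, which is precisely why $\subseteq'$ is the right index set --- explicit in one step, whereas the paper's induction produces the same regrouping edge by edge and never has to name that bijection. The point in your argument that genuinely needs care (deleting isolated vertices when forming $H$ and $H^c$ changes neither $\vartheta_H$ nor the $\kappa$-product, so the indexing neither under- nor over-counts) is exactly the point you single out and verify.
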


\begin{proof}
We use induction over the number of edges in the graph $F$. If $F$ has no edges, the claim is clearly true, since $H=\emptyset\subseteq F$ is the only subgraph of $F$ without isolated vertices and in that case, $\prod_{i\overset{F}{\sim} j} y_{ij} = 1 = \rho_{F,\emptyset}(u,y)$.

Now, assume the assertion is true for all graphs with $e-1$ or fewer edges.
Let $F$ be a graph on $k$ vertices with $e$ edges. Fix an edge in $F$, say the edge between vertices $l$ and $m$, where $1\leq l<m\leq k$, and let $F_{lm}$ be the subgraph of $F$ obtained by removing that edge and any isolated vertex after the edge removal. Then 
\be{
\prod_{i\overset{F}{\sim} j} y_{ij} = y_{lm} \prod_{i\overset{F_{lm}}{\sim} j} y_{ij} 
 = (y_{lm}  -\kappa(u_l,u_m))  \prod_{i\overset{F_{lm}}{\sim} j} y_{ij} +  \kappa(u_l,u_m)  \prod_{i\overset{F_{lm}}{\sim} j} y_{ij} 
}
and by our assumption the decomposition holds for the subgraph $F_{lm}$, that is
\ba{
\prod_{i\overset{F_{lm}}{\sim} j}  y_{ij} & = \sum_{H\subseteq' F_{lm}} \prod_{ \{i,j\} \in E(F_{lm}) \setminus E(H)} \kappa(u_i,u_j) \times \prod_{i\overset{H}{\sim} j} \bclr{ y_{ij}-\kappa(u_i,u_j) }.
}
Thus we get
\besn{
\prod_{i\overset{F}{\sim} j}  y_{ij} 
= & \bclr{y_{lm}  -\kappa(u_l,u_m)}  \sum_{H\subseteq' F_{lm}} \prod_{  \{i,j\} \in E(F_{lm})\setminus E(H)} \kappa(u_i,u_j) \times\prod_{i\overset{H}{\sim} j} \bclr{y_{ij}-\kappa(u_i,u_j)}  \\
& +  \kappa(u_l,u_m) \sum_{H\subseteq' F_{lm}} \prod_{ \{i,j\} \in E(F_{lm})\setminus E(H)} \kappa(u_i,u_j) \times \prod_{i\overset{H}{\sim} j} \bclr{y_{ij}-\kappa(u_i,u_j)}  \\
= &\sum_{H\subseteq' F_{lm}}\prod_{\{i,j\} \in E(F_{lm})\setminus E(H)} \kappa(u_i,u_j) \times\prod_{i\overset{H}{\sim} j} \bclr{y_{ij}-\kappa(u_i,u_j)} \bclr{y_{lm}  -\kappa(u_l,u_m)}    \\
& +  \sum_{H\subseteq' F_{lm}} \prod_{ \{i,j\} \in E(F_{lm})\setminus E(H)} \kappa(u_i,u_j)\times \kappa(u_l,u_m) \times \prod_{i\overset{H}{\sim} j} \bclr{y_{ij}-\kappa(u_i,u_j)}  \\
= &  \sum_{\substack{H\subseteq' F:\\ \{l,m\}\in E(H)}}  \prod_{ \{i,j\} \in E(F)\setminus E(H)} \kappa(u_i,u_j) \times \prod_{i\overset{H}{\sim} j} \bclr{y_{ij}-\kappa(u_i,u_j)}  \\
& +  \sum_{\substack{H\subseteq' F:\\ \{l,m\}\not\in E(H)}}  \prod_{ \{i,j\} \in E(F)\setminus E(H)} \kappa(u_i,u_j) \times \prod_{i\overset{H}{\sim} j} \bclr{y_{ij}-\kappa(u_i,u_j)} \\
&=  \sum_{H\subseteq' F} \prod_{ \{i,j\} \in E(F)\setminus E(H)} \kappa(u_i,u_j) \times \prod_{i\overset{H}{\sim} j} \bclr{y_{ij}-\kappa(u_i,u_j)}.
}
Hence, the assertion is true for $F$, which completes the proof. 
\end{proof}

\begin{proof}[Proof of Lemma~\ref{lem1}] By Lemma~\ref{lem9},
\be{
  t^{\inj}_{F}(G_n) = \sum_{H\subseteq' F} s_{H}(U,Y),
}
where
\be{
  s_{H}(U,Y) = \frac{1}{(n)_k}\sum_{a\in\cA^n_k} \rho_{H}(U_a,Y_a)
}
(we drop dependence on $F$, since it is fixed).
Now, for $A\subset[k]$ (including the empty set), let
\be{
  M_A = \{\psi\in L_2([0,1]^k): \text{$\psi(u)$ depends on $(u_i)_{i\in A}$ only}\}
}
(in particular, $M_\emptyset$ consists of all constants)
and
\be{
  M_A^0 = \bclc{ \psi \in M_A:\text{$\IE\clc{\psi(U)\phi(U)}=0$ for all $B\subsetneq A$ and all $\phi\in M_B$}}.
}
From \cite[Lemma~11.17]{Janson1997}, it follows that, for any $\psi\in L_2([0,1]^k)$, there exists a unique orthogonal decomposition
\ben{\label{57}
  \psi(u) = \sum_{A\subseteq[k]} \psi_{A}(u), \qquad \psi_{A}\in M_A^0,\,A\subseteq[k].
}
Applying this to $\psi_H=\prod_{i\overset{H^c}{\sim} j} \kappa(u_i,u_j)=\sum_{A\subseteq [k]} \~\psi_{H,A}(u)$, we can decompose $s_{H}$ further into a sum of the form
\be{
  s_{H,A}(u,y) = \frac{1}{(n)_k} \sum_{a\in\cA^n_{k}} \~\psi_{H,A}(u_a)  \prod_{i\overset{H}{\sim} j} \bclr{y_{a_ia_j}-\kappa(u_{a_i},u_{a_j})}.
}
Let $l$ be the number of vertices in $H\cup A$; we can rewrite $s_{H,A}$ as $r_{H,A}$ 
where
\be{
r_{H,A}(u,y) =  \frac{1}{(n)_l} \sum_{a\in\cA^n_{l}} \psi_{H,A}\bclr{u_{a_{A_P}}}  \prod_{i\overset{H_P}{\sim} j} \bclr{y_{a_ia_j}-\kappa(u_{a_i},u_{a_j})},
}
with $\psi_{H,A}\clr{u}$, $u\in [0,1]^{\abs{A}}$, being the function obtained from $\~\psi_{H,A}\clr{u}$, $u\in [0,1]^k$, by a change of coordinates from the (now ordered) set $A$ to $(1,\dots,\abs{A})$. The claims about covariances and variances are straightforward to check.
\end{proof}

\begin{proof}[Proof of Lemma~\ref{lem2}]
Note that, for $\abs{A}\geq 2$, $M_A^0$ is the $L_2$-closure of the linear space spanned by
\be{
  \bbbclc{\prod_{i\in A} \psi_i(u_i): \psi_i\in L_2^\circ([0,1])}.
}
Hence, for any $\eps>0$, there are $N_{H,A}$ and  $\psi_{H,A,p,v}\in L_2^\circ([0,1])$, $v\in  [\abs{A}]$, $1\leq p\leq N$, such that
\be{
  \IE\bbbclr{ \psi_{H,A}(U) - \sum_{p=1}^{N_{H,A}} \prod_{i=1}^{\abs{A}}\psi_{H,A,p,i}(U_i)}^2\leq \eps,
}
and hence, for any $a\in\cA^n_k$,
\be{
  \IE\bbbbclr{\psi_{H,A}(U_{a_{A_P}}) \prod_{i\overset{H_P}{\sim} j} \bclr{Y_{ij}-\kappa(U_{i},U_{j})}-\sum_{p=1}^{N_{H,A}} \prod_{i=1}^{\abs{A}} \psi_{H,A,p,i}(U_{a_i})\prod_{i\overset{H_P}{\sim} j} \bclr{Y_{ij}-\kappa(U_{i},U_{j})}}^2
  \leq \eps,
}
since $\abs{\clr{Y_{a_ia_j}-\kappa(U_{a_i},U_{a_j})}}\leq 1$. 
With
\be{
  \~r_{H,A}(u,y) =  \frac{1}{(n)_l} \sum_{a\in\cA^n_{l}}\sum_{p=1}^{N_{H,A}} \prod_{i=1}^{\abs{A}} \psi_{H,A,p,i}(u_{a_i})\prod_{i\overset{H_P}{\sim} j} \bclr{y_{a_ia_j}-\kappa(u_{a_i},u_{a_j})}
}
we obtain
\bes{
  & \IE\bclr{r_{H,A}(U,Y)-\~r_{H,A}(U,Y)}^2\\
  &\enskip \leq \IE\bbbbclr{  \frac{1}{(n)_l}\sum_{a\in\cA^n_{l}}\bbbclr{\psi_{H,A}(U_a)-\sum_{p=1}^{N_{H,A}} \prod_{i=1}^{\abs{A}}\psi_{H,A,p,i}(U_{a_i})} \prod_{i\overset{H_P}{\sim} j} \bclr{Y_{a_ia_j}-\kappa(U_{a_i},U_{a_j})}}^2\\
  &\enskip \leq  \frac{l! \,\eps}{(n)_l} ,
}
where we have used that $\prod_{i=1}^{\abs{A}}\psi_{H,A,p,i}\in L_2^\circ\bclr{[0,1]^{\abs{A}}}$, so that all cross terms with $\abs{a\cap a'}>l$ vanish.
The final claim now follows from Lemma~\ref{lem10}.
\end{proof}

\begin{lemma}\label{lem10} Let $H$ be a graph on the vertex set $[l]$, and let $C_1,\dots,C_r$ denote the connected components of $H$. For each $1\leq i\leq r$, let $l_i$ be the size of $C_i$, let $C'_i$ be a graph on $[l_i]$ that is isomorphic to $C_i$, and let $\psi_i\in L_2^\circ([0,1]^{l_i})$. Let
\be{
  S_i(u,y) = \sum_{a\in\cA^n_{l_i}}\psi_i(u_a)\prod_{v\stackrel{C_i'}\sim w}\bclr{y_{a_va_w}-\kappa(u_{a_v},u_{a_w})}
}
Then 
\besn{\label{58}
  &\IE\bbbclr{ \sum_{a\in\cA^n_l}\prod_{i=1}^r\psi_i(U_{a_{V(C_i)}})\prod_{v\stackrel{C_i}\sim w}\bclr{y_{a_va_w}-\kappa(U_{a_v},U_{a_w})}-\prod_{i=1}^rS_i(U,Y)}^2\leq Cn^{l-1}
}
for all $u_v\in[0,1]$ and $y_{vw}\in\{0,1\}$, $1\leq v<w<n$. 
\end{lemma}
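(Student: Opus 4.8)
The plan is to expand the squared $L_2$-norm in \eqref{58} and reduce the estimate to a vertex-counting problem of exactly the kind carried out in Lemmas~\ref{lem5}--\ref{lem7}. Write $T$ for the first sum in \eqref{58}, that is, the sum over $a\in\cA^n_l$ in which all $l$ labels are jointly distinct, and set $P=\prod_{i=1}^r S_i(U,Y)$. Expanding the product, each summand of $P$ is indexed by an $r$-tuple $(a^{(1)},\dots,a^{(r)})$ with $a^{(i)}\in\cA^n_{k_i}$: labels inside a single component are distinct, but labels belonging to \emph{different} components may coincide. The summands in which all labels are jointly distinct are precisely those constituting $T$, so that
\be{
  P-T=\sum_{(a^{(1)},\dots,a^{(r)})\in\cC}\prod_{i=1}^r\psi_i(U_{a^{(i)}})\prod_{v\overset{C_i'}{\sim}w}\bclr{Y_{a^{(i)}_va^{(i)}_w}-\kappa(U_{a^{(i)}_v},U_{a^{(i)}_w})},
}
where $\cC$ is the set of tuples in which at least one label is shared between two distinct components. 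The goal is to show that this single forced collision costs a factor $n^{-1}$ relative to the scale $n^l$ that governs $T$ and $P$ separately.

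The main step is to bound $\IE(P-T)^2$ by expanding it into a double sum over two configurations drawn from $\cC$ and applying Lemma~\ref{lem4}. Since every edge factor $Y_{vw}-\kappa(U_v,U_w)$ is centred and conditionally independent given $U$, and since each $\psi_i\in L_2^\circ([0,1]^{k_i})$ is centred in each of its coordinates, the expectation of a given product vanishes unless, in the combined configuration formed by the $2r$ component-placements of the two copies, every edge and every vertex is covered by at least two placements. This is exactly the conclusion of Lemma~\ref{lem4}, and it restricts attention to the configurations that survive.

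It then remains to count the surviving configurations, which I would do by a case analysis on the overlap pattern in the spirit of Lemmas~\ref{lem5}--\ref{lem7}. The ``diagonal'' configurations, in which the two copies are aligned and each copy on its own already has jointly distinct labels, are responsible for the leading $\bigo(n^l)$ terms of $\IE T^2$ and $\IE P^2$; demanding instead that at least one copy lie in $\cC$ forces an otherwise-free vertex to be identified with another, removing one degree of freedom and leaving only $\bigo(n^{l-1})$ admissible configurations. As $\psi_i$ and the $Y_{vw}$ are bounded, each admissible term contributes $\bigo(1)$, and summing yields the bound $Cn^{l-1}$. The hard part is precisely this combinatorial bookkeeping: one must track the edge-covering and vertex-covering constraints simultaneously, and handle with care the case of \emph{isomorphic} components, where two placements may coincide on the same labels and the centering of the $\psi_i$ must be used to ensure that the count of free vertices genuinely drops by one; this is the analogue here of the delicate overlap analysis in the proof of Lemma~\ref{lem7}.
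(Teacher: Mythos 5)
Your overall strategy --- expand $\prod_i S_i$, identify the jointly-distinct configurations with the first sum in \eqref{58}, and bound the second moment of the leftover collision terms via Lemma~\ref{lem4} plus vertex counting --- is exactly the route the paper takes (its own proof is a two-sentence version of this). But the step you yourself flag as the hard part is where the argument genuinely breaks, and the fix you propose (invoking the centring of the $\psi_i$) cannot work. Write, as you do, $T$ for the first sum in \eqref{58} and $P=\prod_{i=1}^r S_i(U,Y)$, and take $r=2$, $C_1=C_2=K_2$, so $l=4$, with $\psi_1=\psi_2=g\otimes g$ for some bounded nonzero $g\in L_2^\circ([0,1])$. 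Among the collision terms of $P-T$ are those in which the two edges are placed on the \emph{same} unordered pair $\{a_1,a_2\}$; such a term equals $\psi_1(U_{a_1},U_{a_2})\psi_2(U_{a_1},U_{a_2})\bclr{Y_{a_1a_2}-\kappa(U_{a_1},U_{a_2})}^2$, whose expectation is $\mu:=\IE\bclc{g(U_1)^2g(U_2)^2\kappa(U_1,U_2)(1-\kappa(U_1,U_2))}>0$: the centring of $g$ is irrelevant because both $g$ and the edge factor appear \emph{squared}. There are $\asymp n^2$ such terms, and all other collision terms have mean zero, so $\IE(P-T)\asymp n^2$ and hence $\IE(T-P)^2\geq\bclr{\IE(T-P)}^2\asymp n^4=n^l$, not $\bigo(n^{l-1})$. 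In your double-sum picture this is precisely where the count fails: a pair consisting of two coincident placements on $\{a_1,a_2\}$ and two coincident placements on a disjoint pair $\{a_1',a_2'\}$ satisfies the covering condition of Lemma~\ref{lem4} (every vertex and every edge is covered twice), there are $\asymp n^l$ such pairs, and each contributes $\mu^2>0$; no degree of freedom is lost because the two copies need not touch each other at all, so your claim that only $\bigo(n^{l-1})$ admissible pairs survive is false.

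The conclusion is that \eqref{58}, read literally as a second moment, fails whenever two of the components $C_1,\dots,C_r$ are isomorphic and the corresponding diagonal expectation $\mu$ does not vanish --- a situation that does occur in the intended application (Lemma~\ref{lem2}; take $H$ a union of two disjoint edges). What your orthogonality-and-counting argument does prove, once repaired, is the bound with $\Var(P-T)$ in place of $\IE(T-P)^2$ --- equivalently, the stated bound for the mean-corrected product $\prod_iS_i-\IE\prod_iS_i$ --- or the bound as stated under the additional hypothesis that no two components are isomorphic; in those settings the surviving pairs really do lose a free vertex. So this is not a deficiency of your write-up relative to the paper, whose own proof is silent on exactly the same point, but it is a genuine gap: the isomorphic-component case defeats both your counting claim and the lemma as literally stated, and no amount of bookkeeping of the kind you outline can recover the factor $n^{-1}$ for the uncentred second moment.
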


\begin{proof} When expanding the term $\prod_{i=1}^r \sum_{a\in \cA^n_{l_i}}$, consider two cases: either the different tuples of indices are all disjoint, or they overlap by at least one index. The first case easily yields the second expression in the difference \eqref{58}. For the second case, the size of the union of the indices can be at most $l-1$ which gives the order of the error in the approximation \eqref{58}.
\end{proof}

\section*{Supplementary Material}

The code to reproduce Table~\ref{tab1} can be found at \url{github.com/aroellin/csgc}.

\section*{Acknowledgements}

This project was supported by NUS Research Grant R-155-000-198-114. We thank Siva Athreya and Matas Sileikis for helpful discussions. We also thank the referee for helpful suggestions.

\setlength{\bibsep}{0.5ex}
\def\bibfont{\small}



\begin{appendix}
\section{Complete orthogonal decomposition of the triangle density for $2\times2$ block graphon}

Under the assumptions of Section~\ref{sec1-1}, we have
\be{
   t^{\inj}_\triangle(G_n) = R_{0.0} +  R_{0.5}+R_{1.0}+R_{1.5}+R_{2.0}+R_{2.5}
}
where
\ba{
  R_{0.0} & =(\alpha  \gamma+\beta  (1-\gamma) ) \bclr{\gamma  \left(\alpha ^2 \gamma +3 (1-\gamma) \delta ^2\right)
      -\alpha  \beta  \gamma  (1-\gamma)+\beta ^2 (1-\gamma)^2},\\
 R_{0.5} &=\frac{1}{n^{1/2}}\times \bclr{3 (\alpha  \gamma  (\alpha ^2 \gamma +(2-3 \gamma ) \delta ^2)-\beta ^3 (1-\gamma)^2+\beta  (3 \gamma ^2-4 \gamma +1) \delta ^2)}W, \\
  R_{1.0}&= \frac{1}{n-1}\times3\bclr{\alpha ^3 \gamma +\delta ^2 (\alpha(1-3\gamma) -\beta  (2-3 \gamma))+\beta ^3(1- \gamma)} (W^2 -\gamma(1-\gamma))\\    
  &\quad+\frac{1}{n^{1/2}(n-1)^{1/2}}\\
  &\qquad\times18^{1/2}\bbclr{
  (\alpha\gamma(\alpha -2\delta) +\beta ^2 (1-\gamma)-2 \beta  (1-\gamma) \delta +\delta ^2)V_{\edge,4}\\
  &\qquad\quad+(\alpha  \gamma +\beta  (1-\gamma)) (\alpha  \gamma -\beta  (1-\gamma)+ (1-2 \gamma)\delta )(V_{\edge,2}+V_{\edge,3})\\
  &\qquad\quad+\bclr{\gamma  (\alpha ^2 \gamma ^2-2 \alpha  (\gamma -1) \gamma  \delta -(\gamma -1) \delta ^2)-\beta ^2 (\gamma -1)^3+2 \beta  \gamma  (\gamma -1)^2 \delta} V_{\edge,1}},\\   
  R_{1.5}&=\frac{1}{n^{1/2}(n-1)}\times 3 \left(\alpha ^3 \gamma  (3 \gamma -2)-\alpha  \left(9 \gamma ^2-8 \gamma+1\right) \delta ^2-\beta ^3 \left(3 \gamma ^2-4 \gamma +1\right)+\beta  \left(9 \gamma ^2-10 \gamma +2\right) \delta ^2\right)W\\
  &\quad + \frac{n^{1/2}}{(n-1)(n-2)}\times\bclr{\alpha ^3+3 \delta ^2 (\beta -\alpha )-\beta ^3}\bclr{W^3 - n^{-1/2} \gamma(1-\gamma)  (1-2 \gamma )}\\
  &\quad + \frac{1}{n^{1/2}(n-1)^{1/2}(n-2)^{1/2}} \times 6^{1/2}\bclr{V_\triangle+V_{\twostar,1}+V_{\twostar,2}+V_{\twostar,3}}\\  
  &\quad +\frac{1}{(n-1)^{1/2}(n-2)}\\
  &\qquad\times 18^{1/2}\bbclr{(\alpha ^2 \gamma ^2+2 \alpha  \gamma  (1-\gamma) \delta -\beta ^2 (1-\gamma)^2-2 \beta  \gamma  (1-\gamma) \delta +(1-2 \gamma ) \delta ^2)V_{\edge,1}W \\
  &\qquad\quad + (\alpha ^2 \gamma +\alpha  (1 -2 \gamma)\delta+\beta ^2 (1-\gamma)-\beta  (1-2 \gamma) \delta -\delta ^2)(V_{\edge,2}+V_{\edge,3})W \\
  &\qquad\quad +(\alpha -\beta ) (\alpha +\beta -2 \delta )V_{\edge,4}W},\\
  R_{2.0}&=\frac{1}{n^{1/2}(n-1)^{1/2}(n-2)}\\
  &\qquad\times 18^{1/2}\bclr{2 (1-\gamma) \gamma  \left(\alpha ^2 (-\gamma )+\alpha  (2 \gamma -1) \delta +\beta ^2 (\gamma -1)+\beta  (\delta -2 \gamma  \delta )+\delta ^2\right) V_{\edge,1}\\
  &\qquad\quad+(\beta-\alpha) \left(2 \alpha  \gamma (1-\gamma) +2 \beta \gamma (1-\gamma) +(1-2 \gamma )^2 \delta\right)(V_{\edge,2}+V_{\edge,3})\\
  &\qquad\quad+2 \left(\alpha ^2 (\gamma -1)+\alpha  (\delta -2 \gamma  \delta )-\beta ^2 \gamma +\beta  (2 \gamma -1) \delta +\delta ^2\right) V_{\edge,4}}\\
  &\quad+ \frac{1}{(n-1)(n-2)} \times 3 (2 \gamma -1) (\alpha -\beta ) \left(\alpha ^2+\alpha  \beta +\beta ^2-3 \delta ^2\right)(W^2 -\gamma(1-\gamma)),\\ 
  R_{2.5}&= \frac{1}{n^{1/2}(n-1)(n-2)}\times 2 \left(6 \gamma ^2-6 \gamma +1\right) (\alpha -\beta ) \left(\alpha ^2+\alpha  \beta +\beta ^2-3 \delta ^2\right)W.
}

\end{appendix}

\end{document}